\documentclass[a4paper,10pt]{article}

\usepackage[affil-it]{authblk}

\usepackage[english]{babel}

\usepackage{mathrsfs}
\usepackage{amssymb}
\usepackage{amsmath}
\usepackage{latexsym}
\usepackage{amsthm}
\usepackage{amscd}
\usepackage{graphicx}

\usepackage[colorlinks=true]{hyperref}

\pagestyle{plain}

\theoremstyle{plain}
\newtheorem{teo}{Theorem}[section]
\newtheorem{prop}[teo]{Proposition}
\newtheorem{coroll}[teo]{Corollary}
\newtheorem{lem}[teo]{Lemma}
\newtheorem{rmk}[teo]{Remark}

\theoremstyle{definition}
\newtheorem{defin}[teo]{Definition}

\numberwithin{equation}{section}

\newcommand{\R}{\mathbb R}
\newcommand{\Ha}{\mathcal H}
\newcommand{\Ll}{\mathcal L}
\newcommand{\Co}{\mathcal C}
\newcommand{\eps}{\varepsilon}

\newcommand{\Fc}{\mathcal F}

\newcommand{\Sg}{\mathcal{S}g}

\title{Approximation of sets of finite fractional perimeter by smooth sets
and comparison of local and global $s$-minimal surfaces}

\begin{document}

\author{Luca Lombardini}

\affil{\footnotesize Universit\`a degli Studi di Milano\\ Via Cesare Saldini, 50\\ 20133, Milano, Italia\\
E-mail address: luca.lombardini@unimi.it}

\date{}

\maketitle

\begin{abstract}
In the first part of this paper we show that\let\thefootnote\relax\footnotetext{Part of this work was carried out on the occasion of a visit to the School of Mathematics and Statistics of the University of Melbourne, which the author thanks
for the very warm hospitality.}
a set $E$ has locally finite\let\thefootnote\relax\footnotetext{I would also like to express my gratitude to Enrico Valdinoci
for his advice, support and patience.} $s$-perimeter if and only if it can be approximated in an appropriate sense
by smooth open sets.

In the second part we prove some elementary properties of local and global $s$-minimal sets, such as existence and compactness.\\
We also compare the two notions of minimizer (i.e. local and global), showing that in bounded open sets with Lipschitz boundary they coincide. However,
in general this is not true in unbounded open sets, where
a global $s$-minimal set may fail to exist (we provide an example in the case of a cylinder $\Omega\times\R$).

\end{abstract}

\tableofcontents

\begin{section}{Introduction and main results}

The aim of this paper
consists in better understanding the behavior of the family of sets having (locally) finite fractional perimeter.
In particular, we would like to show that this family is not ``too different'' from the family of Caccioppoli sets
(which are the sets having locally finite classical perimeter).

This paper somehow continues the 
study started in \cite{mine_fractal}. In particular, we showed there
(following an idea appeared in the seminal paper \cite{Visintin}) that sets having finite fractional perimeter can have a very rough boundary, which may indeed be a nowhere rectifiable fractal (like the von Koch snowflake).\\
This represents a dramatic difference between the fractional and the classical perimeter, since Caccioppoli sets have a ``big'' portion of the boundary, the so-called reduced boundary, which is $(n-1)$-rectifiable (by De Giorgi's structure Theorem).

Still, we prove in this paper that a set has (locally) finite fractional perimeter if and only if it can be approximated (in an appropriate way) by smooth open sets.
To be more precise, we show that a set $E$ has locally finite $s$-perimeter if and only if we can find a sequence of smooth open sets
which converge in measure to $E$,
whose boundaries converge to that of $E$ in a uniform sense, and whose $s$-perimeters converge to that of $E$ in every bounded open set.

Such a result is well known for Caccioppoli sets (see e.g.  \cite{Maggi}) and indeed this density property can be used to define the (classical) perimeter functional as the relaxation (with respect to $L^1_{loc}$ convergence) of the $\Ha^{n-1}$ measure of boundaries of smooth open sets, that is
\begin{equation*}\begin{split}P(E,\Omega)=\inf\Big\{\liminf_{k\to\infty}\Ha^{n-1}(\partial E_h&\cap\Omega)\,\big|\,
E_h\subset\R^n\textrm{ open with smooth}\\
&
\textrm{boundary, s.t. }E_h\xrightarrow{loc}E\Big\}.
\end{split}
\end{equation*}

%
%


\smallskip

The second part of this paper is concerned with sets minimizing the fractional perimeter.
The boundaries of these minimizers are often referred to as nonlocal minimal surfaces and naturally arise as
{\em limit interfaces} of long-range interaction phase transition models. In particular, in regimes where
the long-range interaction is dominant, the nonlocal Allen-Cahn energy functional $\Gamma$-converges
to the fractional perimeter (see \cite{SavEnAllenCahn})
and the minimal interfaces of the corresponding
Allen-Cahn equation approach locally uniformly
the nonlocal minimal surfaces (see \cite{SavEndensityest}).

We consider sets which are locally $s$-minimal in an open set $\Omega\subset\R^n$,
namely sets which
minimize the $s$-perimeter in every $\Omega'\subset\subset\Omega$,
and we prove existence and compactness results which extend those of \cite{CRS}.

We also compare this definition of local $s$-minimal set with
the definition of $s$-minimal set introduced in \cite{CRS}, proving that they coincide
when the domain $\Omega$ is a bounded open set with Lipschitz boundary (see Theorem \ref{confront_min_teo}).

In particular, the following existence results are proven:
\begin{itemize}
\item if $\Omega$ is an open set and $E_0$ is a fixed set, then there exists a set $E$ which is locally $s$-minimal in $\Omega$
and such that $E\setminus\Omega=E_0\setminus\Omega$;
\item there exist minimizers in the class of subgraphs, namely nonlocal nonparametric minimal surfaces (see Theorem \ref{nonparametric_exist_teo}
for a precise statement);
\item if $\Omega$ is an open set which has finite $s$-perimeter, then for every fixed set $E_0$ there exists
a set $E$ which is $s$-minimal in $\Omega$ and such that $E\setminus\Omega=E_0\setminus\Omega$.
\end{itemize}

On the other hand, we show that when the domain $\Omega$ is unbounded the nonlocal part
of the $s$-perimeter can be infinite, thus preventing the existence of
competitors having finite $s$-perimeter in $\Omega$ and hence also of ``global'' $s$-minimal sets.
In particular, we study this situation in a cylinder $\Omega^\infty:=\Omega\times\R\subset\R^{n+1}$,
considering as exterior data the subgraph of a (locally) bounded function.\\

In the next subsections we present the precise statements of the main results of this paper.
We begin by recalling the definition of fractional perimeter.

\begin{subsection}{Sets of (locally) finite $s$-perimeter}

Let $s\in(0,1)$ and let $\Omega\subset\mathbb R^n$ be an open set. The $s$-fractional perimeter
of a set $E\subset\mathbb R^n$ in $\Omega$ is defined as
\begin{equation*}
P_s(E,\Omega):=\mathcal L_s(E\cap\Omega,\Co E\cap\Omega)+
\mathcal L_s(E\cap\Omega,\Co E\setminus\Omega)+
\mathcal L_s(E\setminus\Omega,\Co E\cap\Omega),
\end{equation*}
where
\begin{equation*}
\mathcal L_s(A,B):=\int_A\int_B\frac{1}{|x-y|^{n+s}}\,dx\,dy,
\end{equation*}
for every couple of disjoint sets $A,\,B\subset\mathbb R^n$. We simply write $P_s(E)$ for $P_s(E,\R^n)$.

We say that a set $E\subset\R^n$ has locally finite $s$-perimeter in an open set $\Omega\subset\R^n$ if
\begin{equation}
P_s(E,\Omega')<\infty\qquad\textrm{for every open set }\Omega'\subset\subset\Omega.
\end{equation}

We remark that the family of sets having finite $s$-perimeter in $\Omega$ need not coincide with the family of sets of locally
finite $s$-perimeter in $\Omega$,
not even when $\Omega$
is ``nice'' (say bounded and with Lipschitz boundary). To be more precise, since
\begin{equation}\label{sup_for_per_eq}
P_s(E,\Omega)=\sup_{\Omega'\subset\subset\Omega}P_s(E,\Omega'),
\end{equation}
(see Proposition \ref{continuity_in_open_set_seq} and Remark \ref{continuity_in_open_set_seq_rmk}),
a set which has finite $s$-perimeter in $\Omega$ has also locally finite $s$-perimeter.
However the converse, in general, is false.\\
When $\Omega$ is not bounded it is clear that also for sets of locally finite $s$-perimeter
the sup in \eqref{sup_for_per_eq}
may be infinite (consider e.g. $\Omega=\R^n$ and $E=\{x_n\leq0\}$).

Actually, as shown in Remark \ref{counterex_loc_fin_per}, this may happen even when $\Omega$ is bounded and has Lipschitz boundary.
Roughly speaking, this is because the set $E$ might oscillate more and more as it approaches the boundary $\partial\Omega$.\\

\end{subsection}


\begin{subsection}{Approximation by smooth open sets}
We denote by $N_\rho(\Gamma)$ the $\rho$-neighborhood of a set $\Gamma\subset\R^n$, that is
\begin{equation*}
N_\rho(\Gamma):=\{x\in\R^n\,|\,d(x,\Gamma)<\rho\}.
\end{equation*}

The main approximation result is the following. In particular it shows that open sets with smooth boundary are dense in the family of sets of locally finite $s$-perimeter.

\begin{teo}\label{density_smooth_teo}
Let $\Omega\subset\R^n$ be an open set. A set $E\subset\R^n$ has locally finite $s$-perimeter in $\Omega$
if and only if there exists a sequence $E_h\subset\R^n$ of open sets with smooth boundary and $\eps_h\longrightarrow0^+$ such that
\begin{equation*}\begin{split}
& (i)\quad E_h\xrightarrow{loc}E,\qquad\sup_{h\in\mathbb N}P_s(E_h,\Omega')<\infty\quad\textrm{for every }\Omega'\subset\subset\Omega,\\
& (ii)\quad\lim_{h\to\infty}P_s(E_h,\Omega')=P_s(E,\Omega')\quad\textrm{for every }\Omega'\subset\subset\Omega,\\
& (iii)\quad\partial E_h\subset N_{\eps_h}(\partial E).
\end{split}
\end{equation*}
Moreover, if $\Omega=\R^n$ and the set $E$ is such that $|E|<\infty$ and $P_s(E)<\infty$, then
\begin{equation}\label{last_eq_teo_app}
E_h\longrightarrow E,\qquad\quad\qquad \lim_{h\to\infty}P_s(E_h)=P_s(E),
\end{equation}
and we can require each set $E_h$ to be bounded (instead of asking $(iii)$).
\end{teo}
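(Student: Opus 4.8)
The plan is as follows. The ``if'' direction is soft: if a sequence as in the statement exists, then $L^1_{loc}$--lower semicontinuity of $P_s(\cdot,\Omega')$, applied together with the uniform bound in $(i)$, gives $P_s(E,\Omega')<\infty$ for every $\Omega'\subset\subset\Omega$ (indeed $(ii)$ gives this directly). So I would concentrate on the converse, which I would prove by a mollification--and--superlevel--set construction.

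First mollify $u:=\chi_E$: put $u_\rho:=u*\phi_\rho$ for a standard mollifier $\phi_\rho$ supported in $B_\rho$, so that $u_\rho\in C^\infty(\R^n)$, $0\le u_\rho\le 1$ and $u_\rho\to u$ in $L^1_{loc}$. Two remarks dispose of $(iii)$ and of the smoothness: if $d(x,\partial E)>\rho$ then $\chi_E$ is constant on $B_\rho(x)$, so $u_\rho(x)\in\{0,1\}$ and $x$ lies in the interior of $\{u_\rho>t\}$ or of its complement for \emph{every} $t\in(0,1)$, whence $\partial\{u_\rho>t\}\subset\{u_\rho=t\}\subset N_{2\rho}(\partial E)$; and by Sard's theorem $\{u_\rho>t\}$ is an open set with smooth boundary for a.e. $t\in(0,1)$. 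For the energy I would use the Gagliardo--type functional $\mathcal N_s(v,A):=\iint_{\R^{2n}\setminus(\Co A)^2}|v(x)-v(y)|\,|x-y|^{-n-s}\,dx\,dy$, which satisfies $\mathcal N_s(\chi_F,A)=2P_s(F,A)$, is $L^1_{loc}$--lower semicontinuous in $v$, obeys the coarea identity $\mathcal N_s(v,A)=2\int_\R P_s(\{v>t\},A)\,dt$ (Fubini plus the layer--cake formula), and, by Jensen's inequality and the translation invariance of the kernel, does not increase under mollification: $\mathcal N_s(u_\rho,A)\le\mathcal N_s(u,A')$ whenever $N_\rho(A)\subset A'$. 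Combining these, for $\Omega'\subset\subset\Omega''\subset\subset\Omega$ and $\rho$ small one gets the \emph{averaged} bound $\int_0^1 P_s(\{u_\rho>t\},\Omega')\,dt=\tfrac12\mathcal N_s(u_\rho,\Omega')\le P_s(E,\Omega'')<\infty$.

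The step I expect to be the main obstacle is passing from this averaged bound to convergence of the energies of a \emph{single}, well--chosen level set. My plan: fix an exhaustion $\Omega_1'\subset\subset\Omega_2'\subset\subset\cdots\nearrow\Omega$ by nice open sets ($|\partial\Omega_k'|=0$) with every compact subset of $\Omega$ contained in some $\Omega_k'$. Lower semicontinuity of $\mathcal N_s(\cdot,\Omega_k')$, the bound above, and the continuity of $P_s(E,\cdot)$ from outside (which uses $|\partial\Omega_k'|=0$ and is in the spirit of \eqref{sup_for_per_eq}) force $\int_0^1 P_s(\{u_\rho>t\},\Omega_k')\,dt\to P_s(E,\Omega_k')$ as $\rho\to0$. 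Setting $g_\rho^{(k)}(t):=P_s(\{u_\rho>t\},\Omega_k')-P_s(E,\Omega_k')$, along a subsequence $\rho_m\to0$ one has $\{u_{\rho_m}>t\}\to E$ in $L^1_{loc}$ for a.e. $t$, hence $\liminf_m g_{\rho_m}^{(k)}(t)\ge0$ a.e. by lower semicontinuity; since the negative parts $(g_{\rho_m}^{(k)})^-$ are bounded by the constant $P_s(E,\Omega_k')$, dominated convergence yields $\int_0^1(g_{\rho_m}^{(k)})^-\to0$, and together with $\int_0^1 g_{\rho_m}^{(k)}\to0$ this gives $g_{\rho_m}^{(k)}\to0$ in $L^1(0,1)$. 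A diagonal extraction over $k$ then produces $\eps_h\to0$ and a full--measure set of levels for which simultaneously $P_s(\{u_{\eps_h}>t\},\Omega_k')\to P_s(E,\Omega_k')$ for all $k$ and $\{u_{\eps_h}>t\}\to E$ in $L^1_{loc}(\R^n)$; choosing such a $t^*$ that is also a regular value of every $u_{\eps_h}$, the sets $E_h:=\{u_{\eps_h}>t^*\}$ satisfy $(iii)$ and the $\Omega_k'$--version of $(i)$--$(ii)$, and $(i)$ follows for every $\Omega'\subset\subset\Omega$ by monotonicity of $P_s(\cdot,\cdot)$. To get $(ii)$ for an arbitrary $\Omega'\subset\subset\Omega$, squeeze $\Omega'$ between exhaustion sets and note that the ``annular remainder'' $F\mapsto P_s(F,\Omega'')-P_s(F,\Omega')$ is lower semicontinuous along $L^1_{loc}$--convergent sequences (it splits into an $\mathcal N_s$--type term, lower semicontinuous, and bilinear terms whose arguments interact only through a kernel integrable away from the diagonal, hence continuous along such sequences).

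Finally, for the bounded case ($\Omega=\R^n$, $|E|<\infty$, $P_s(E)<\infty$) I would first reduce to bounded sets without losing energy. Since $\int_0^\infty\Ll_s(E\cap B_r,E\setminus B_r)\,dr=\int_E\int_E(|y|-|x|)^+\,|x-y|^{-n-s}\,dx\,dy\le\int_E\int_E|x-y|^{1-n-s}\,dx\,dy<\infty$ (the last integral being finite because $|E|<\infty$ and $n+s-1<n$), there are radii $R_j\to\infty$ along which $\Ll_s(E\cap B_{R_j},E\setminus B_{R_j})\to0$; writing $\Co(E\cap B_{R_j})=\Co E\,\sqcup\,(E\setminus B_{R_j})$ one checks that $F_j:=E\cap B_{R_j}$ is bounded, $F_j\to E$ in $L^1(\R^n)$ and $P_s(F_j)\to P_s(E)$. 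Each $F_j$ is a bounded set of finite $s$--perimeter, so applying the already proven implication in a large ball $B_M\supset\supset F_j$ and using that for sets contained in $B_M$ the $s$--perimeter in any $\Omega'$ with $F_j\subset\subset\Omega'\subset\subset B_M$ equals the one in $\R^n$, one obtains bounded open sets $G_j$ with smooth boundary such that $|G_j\triangle F_j|\to0$ and $P_s(G_j)\to P_s(F_j)$; a diagonal choice $G_{j(h)}$ then yields \eqref{last_eq_teo_app} with bounded approximating sets. (Alternatively one reruns the mollification argument directly on each bounded $F_j$, now using $\chi_{F_j}\in W^{s,1}(\R^n)$ so that $\chi_{F_j}*\phi_\rho\to\chi_{F_j}$ in $W^{s,1}$, which makes the level--set energies converge with no reference to exhaustions, the superlevel sets being automatically bounded.)
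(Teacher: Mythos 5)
Your proposal is correct and follows the same skeleton as the paper's proof: mollify $\chi_E$, use Sard's theorem to get smooth superlevel sets, derive $(iii)$ from the fact that $\{0<u_\rho<1\}$ lies in a $\rho$-neighborhood of $\partial E$, convert convergence of the mollified Gagliardo energies into an averaged-in-$t$ statement via the coarea formula, upgrade it to a.e.\ levels by Fatou/dominated convergence, and finish with a diagonal extraction over an exhaustion plus the ``split into two lower semicontinuous pieces whose sum converges'' trick (the paper's Proposition \ref{subcont_lem_approx}) to pass to arbitrary $\Omega'\subset\subset\Omega$. Where you genuinely diverge is in two supporting lemmas. First, for $\Fc(u_\rho,\Omega')\to\Fc(u,\Omega')$ the paper proves the stronger statement $\Fc(u_\rho-u,\Omega')\to 0$ by approximating the kernel-weighted difference quotient in $L^1(Q(\Omega))$ by $C^1_c$ functions and using continuity of translations (Lemma \ref{dens_lemma}$(i)$); you instead sandwich, combining lower semicontinuity with the Jensen bound $\mathcal N_s(u_\rho,A)\le\mathcal N_s(u,N_\rho(A))$ and outer continuity of $A\mapsto P_s(E,A)$, which costs you the harmless extra requirement $|\partial\Omega_k'|=0$ on the exhaustion but is more elementary. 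Your $L^1(0,1)$-convergence of $g_\rho^{(k)}$ also yields the full limit (not merely the $\liminf$) at a.e.\ level, which makes the diagonal step over $k$ cleaner than the paper's. Second, for the bounded case the paper multiplies $u$ by cut-off functions and controls the seminorm of $\psi_k u$ (Lemma \ref{dens_lemma}$(iii)$), whereas you truncate the set itself by balls $B_{R_j}$, selecting good radii through the Fubini estimate $\int_0^\infty\Ll_s(E\cap B_r,E\setminus B_r)\,dr\le\int_E\int_E|x-y|^{1-n-s}\,dx\,dy<\infty$; this is a nice, more geometric alternative, at the price of a short additional argument that the approximants of the bounded sets $F_j$ produced by the first part are themselves bounded (which follows from $\partial G\subset N_\eps(\partial F_j)\subset B_{R_j+\eps}$ together with $G\xrightarrow{loc}F_j$). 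Both routes are sound; the paper's is more self-contained on the functional-analytic side, yours trades that for softer, more set-theoretic arguments.
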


The scheme of the proof is the following.

First of all, in Section 3.1 we prove appropriate approximation results for the functional
\begin{equation*}
\Fc(u,\Omega)=\frac{1}{2}\int_{\R^{2n}\setminus(\Co\Omega)^2}\frac{|u(x)-u(y)|}{|x-y|^{n+s}}\,dx\,dy,
\end{equation*}
which we believe might be interesting on their own.

Then we exploit the generalized coarea formula
\begin{equation*}
\Fc(u,\Omega)=\int_{-\infty}^\infty P_s(\{u>t\},\Omega)\,dt,
\end{equation*}
and Sard's Theorem to obtain the approximation of the set $E$ by superlevel sets of smooth functions which approximate $\chi_E$.

Finally, a diagonal argument guarantees the convergence of the $s$-perimeters
in every open set $\Omega'\subset\subset\Omega$.

\begin{rmk}
Let $\Omega\subset\R^n$ be a bounded open set with Lipschitz boundary and consider a set $E$ which has
finite $s$-perimeter in $\Omega$. Notice that if we apply Theorem \ref{density_smooth_teo},
in point $(ii)$ we do not get the convergence of the $s$-perimeters in $\Omega$, but only
in every $\Omega'\subset\subset\Omega$.
On the other hand, if we can find an open set $\mathcal O$ such that $\Omega\subset\subset\mathcal O$
and
\[P_s(E,\mathcal O)<\infty,\]
then we can apply Theorem \ref{density_smooth_teo} in $\mathcal O$. In particular, since $\Omega\subset\subset\mathcal O$,
by point $(ii)$ we obtain
\begin{equation}\label{conv_whole_Omega}
\lim_{h\to\infty}P_s(E_h,\Omega)=P_s(E,\Omega).
\end{equation}
\end{rmk}

Still, when $\Omega$ is a bounded open set with Lipschitz boundary, we can always obtain the convergence \eqref{conv_whole_Omega}
at the cost of weakening a little our request on the uniform convergence of the boundaries.

\begin{teo}\label{appro_in_bded_open}
Let $\Omega\subset\R^n$ be a bounded open set with Lipschitz boundary. A set $E\subset\R^n$ has finite $s$-perimeter
in $\Omega$ if and only if there exists a sequence $\{E_h\}$ of open sets with smooth boundary
and $\eps_h\longrightarrow0^+$ such that
\begin{equation*}\begin{split}
& (i)\quad E_h\xrightarrow{loc}E,\qquad\sup_{h\in\mathbb N}P_s(E_h,\Omega)<\infty,\\
& (ii)\quad\lim_{h\to\infty}P_s(E_h,\Omega)=P_s(E,\Omega),\\
& (iii)\quad\partial E_h\setminus N_{\eps_h}(\partial\Omega)\subset N_{\eps_h}(\partial E).
\end{split}
\end{equation*}
\end{teo}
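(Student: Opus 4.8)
The plan is to reduce Theorem \ref{appro_in_bded_open} to the already-proven Theorem \ref{density_smooth_teo} by enlarging the domain. Given a bounded open set $\Omega$ with Lipschitz boundary and a set $E$ with $P_s(E,\Omega)<\infty$, the first step is to choose a larger bounded open set $\mathcal O$ with Lipschitz boundary such that $\Omega\subset\subset\mathcal O$ and $P_s(E,\mathcal O)<\infty$. The natural candidate is a tubular-type enlargement $\mathcal O=\Omega\cup N_\delta(\partial\Omega)$ (suitably smoothed), for a small $\delta>0$; one then has to verify that the extra contribution $P_s(E,\mathcal O)-P_s(E,\Omega)$ is finite. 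This is where the Lipschitz assumption on $\partial\Omega$ enters: the collar $\mathcal O\setminus\overline\Omega$ is itself a set of finite (classical, hence fractional) perimeter, and the interaction terms that appear when passing from $\Omega$ to $\mathcal O$ are all controlled by $P_s(E,\Omega)$, by $\mathcal L_s(\mathcal O\setminus\overline\Omega,\R^n\setminus(\mathcal O\setminus\overline\Omega))<\infty$, and by the long-range tail $\mathcal L_s(\mathcal O,\Co N_1(\mathcal O))<\infty$, which is finite because $\mathcal O$ is bounded. A clean way to organize this is to use the locality/additivity properties of $P_s$ and $\mathcal L_s$ together with the estimate, valid for any set $F$, $P_s(F,A)\le P_s(F,B)+\mathcal L_s(A\setminus B,\Co(A\setminus B))+(\text{bounded tail})$ whenever $B\subset A$ and $A\setminus B$ has finite perimeter; I would isolate this as a short lemma if it is not already available in Section 2.

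Once such an $\mathcal O$ is fixed, apply Theorem \ref{density_smooth_teo} with the open set $\mathcal O$ in place of $\Omega$: since $E$ has finite (hence locally finite) $s$-perimeter in $\mathcal O$, we obtain smooth open sets $E_h\xrightarrow{loc}E$ and $\eps_h\to0^+$ with $\sup_h P_s(E_h,\Omega')<\infty$ and $P_s(E_h,\Omega')\to P_s(E,\Omega')$ for every $\Omega'\subset\subset\mathcal O$, and with $\partial E_h\subset N_{\eps_h}(\partial E)$. Because $\Omega\subset\subset\mathcal O$, we may take $\Omega'=\Omega$ itself (or a slightly larger $\Omega'$ with $\Omega\subset\subset\Omega'\subset\subset\mathcal O$): this immediately yields (i) and (ii) of Theorem \ref{appro_in_bded_open} with $\Omega$ in place of $\Omega'$, exactly as in the Remark preceding the statement. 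Point (iii) is then \emph{weaker} than what Theorem \ref{density_smooth_teo} gives — we even have the full inclusion $\partial E_h\subset N_{\eps_h}(\partial E)$, so a fortiori $\partial E_h\setminus N_{\eps_h}(\partial\Omega)\subset N_{\eps_h}(\partial E)$ — so it holds trivially. Conversely, the existence of such a sequence forces $P_s(E,\Omega)<\infty$ by (ii) (or directly by lower semicontinuity of $P_s(\cdot,\Omega)$ under $L^1_{loc}$ convergence together with the uniform bound in (i)), so the equivalence is complete.

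The main obstacle is the enlargement step: one must genuinely produce $\mathcal O\supset\supset\Omega$, still bounded with Lipschitz boundary, on which $P_s(E,\mathcal O)<\infty$. The subtlety flagged in the paper's own Remark (and in Remark \ref{counterex_loc_fin_per}) is that $E$ may oscillate wildly near $\partial\Omega$, so finiteness of $P_s(E,\Omega)$ does \emph{not} by itself give finiteness in any neighborhood of $\overline\Omega$ — the new collar could pick up the interaction of $E\cap\Omega$ with $\Co E\cap(\mathcal O\setminus\Omega)$, which is not part of $P_s(E,\Omega)$. The resolution is that this interaction term is $\mathcal L_s(E\cap\Omega,\Co E\cap(\mathcal O\setminus\Omega))\le \mathcal L_s(\Omega\setminus N_\delta(\partial\Omega),\,\mathcal O\setminus\Omega)+\mathcal L_s(N_\delta(\partial\Omega)\cap\Omega,\,\mathcal O\setminus\Omega)$, where the first summand is finite by a positive distance between the two sets and the boundedness of $\mathcal O$, and the second is dominated by $P_s(E,\Omega)$ up to the finite-perimeter contribution of the thin collar $N_\delta(\partial\Omega)$, which is finite precisely because $\partial\Omega$ is Lipschitz. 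Making this splitting rigorous — choosing $\delta$, smoothing $\mathcal O$ to have smooth boundary so that Theorem \ref{density_smooth_teo} applies cleanly, and bounding each interaction piece — is the only real work; everything after the reduction is a direct quotation of Theorem \ref{density_smooth_teo} and the preceding Remark.
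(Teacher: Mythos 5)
Your reduction to Theorem \ref{density_smooth_teo} via an enlarged domain $\mathcal O\supset\supset\Omega$ fails at the enlargement step, and the obstruction is precisely the phenomenon described in Remark \ref{counterex_loc_fin_per}. By Proposition \ref{subopensets}, passing from $P_s(E,\Omega)$ to $P_s(E,\mathcal O)$ introduces, among other terms, the \emph{self-interaction of $E$ and $\Co E$ inside the new collar}, namely $\Ll_s\big(E\cap(\mathcal O\setminus\Omega),\,\Co E\cap(\mathcal O\setminus\Omega)\big)$. This quantity is controlled neither by $P_s(E,\Omega)$ (which only sees the interaction of the collar with $\Omega$), nor by the classical or fractional perimeter of the collar itself, nor by any long-range tail. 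Your accounting only treats $\Ll_s(E\cap\Omega,\Co E\cap(\mathcal O\setminus\Omega))$ --- which is in fact already finite, being a piece of $\Ll_s(E\cap\Omega,\Co E\setminus\Omega)\leq P_s(E,\Omega)$ --- and the proposed lemma ``$P_s(F,A)\leq P_s(F,B)+\Ll_s(A\setminus B,\Co(A\setminus B))+\textrm{tail}$'' is false for the same reason: $\Ll_s(A\setminus B,\Co(A\setminus B))=P_s(A\setminus B)$ does not dominate the oscillation of $F$ \emph{within} $A\setminus B$. Concretely, in dimension $n=1$ take $\Omega=(0,1)$, $E\cap\Omega=\emptyset$, and let $E\subset(1,2)$ be a union of intervals accumulating at $1$ built as in Remark \ref{counterex_loc_fin_per}. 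Then $P_s(E,\Omega)=\Ll_s(E,\Omega)\leq\Ll_s((1,2),(0,1))<\infty$, yet $P_s(E,\mathcal O)\geq P_s^L(E,\mathcal O\cap(1,2))=\infty$ for \emph{every} open set $\mathcal O\supset\overline\Omega$; no admissible enlargement exists. This is also why the theorem weakens point $(iii)$: if your reduction worked, one would retain the full inclusion $\partial E_h\subset N_{\eps_h}(\partial E)$ of Theorem \ref{density_smooth_teo}, which cannot hold in general here.

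The paper's actual argument goes the other way: instead of enlarging the domain, it first multiplies $\chi_E$ by the cut-off $\varphi_{\delta_h}=1-\chi_{\{|\bar d_\Omega|<\delta_h\}}$, killing $E$ in a tubular neighborhood of $\partial\Omega$; the uniform collar estimates \eqref{uniform_bound_strips} (which is where the Lipschitz regularity of $\partial\Omega$ enters) show that this changes $\Fc(\cdot,\Omega)$ only by $O(\delta_h^{1-s})$ (Lemma \ref{stuff_forml1} and Proposition \ref{density_bded_reg_set}). One then mollifies and selects superlevel sets via the coarea formula and Sard's Theorem exactly as in the proof of Theorem \ref{density_smooth_teo}. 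The cut-off is what makes the convergence of the full $s$-perimeter in $\Omega$ possible, and it is also what produces spurious boundary of $E_h$ inside $N_{\eps_h}(\partial\Omega)$, forcing the weakened form of $(iii)$.
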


Notice that in point $(iii)$ we do not ask the convergence of the boundaries in the whole of $\R^n$
but only in $\R^n\setminus N_\delta(\partial\Omega)$ (for any fixed $\delta>0$).
Since
$N_{\eps_h}(\partial\Omega)\searrow\partial\Omega$,
roughly speaking, the convergence holds in $\R^n$ ``in the limit''.

Moreover, we remark that point $(ii)$ in Theorem \ref{appro_in_bded_open}
guarantees the convergence of the $s$-perimeters also in every $\Omega'\subset\subset\Omega$ (see Remark \ref{rmk_conv_every_cpt_subopen}).

Finally, from the lower semicontinuity of the $s$-perimeter and Theorem \ref{appro_in_bded_open}, we obtain
\begin{coroll}
Let $\Omega\subset\R^n$ be a bounded open set with Lipschitz boundary and let $E\subset\R^n$. Then
\begin{equation}\begin{split}
P_s(E,\Omega)=\inf\Big\{\liminf_{h\to\infty}P_s(E_h&,\Omega)\,\big|\,E_h\subset\R^n\textrm{ open with smooth}\\
&
\textrm{boundary, s.t. }E_h\xrightarrow{loc}E\Big\}.
\end{split}
\end{equation}

\end{coroll}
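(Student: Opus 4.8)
The plan is to prove the two inequalities separately; write $I(E,\Omega)$ for the infimum on the right-hand side. The content is light, since all the real work has already been absorbed into Theorem~\ref{appro_in_bded_open}; the corollary is essentially a repackaging of it together with the (standard) lower semicontinuity of the $s$-perimeter.

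For the inequality $P_s(E,\Omega)\le I(E,\Omega)$ I would rely on the lower semicontinuity of $E\mapsto P_s(E,\Omega)$ with respect to $L^1_{loc}$ convergence. First I would record the identity
\[
P_s(E,\Omega)=\Fc(\chi_E,\Omega)=\frac12\int_{\R^{2n}\setminus(\Co\Omega)^2}\frac{|\chi_E(x)-\chi_E(y)|}{\kers}\,dx\,dy,
\]
which follows directly from the definition of $P_s$: an unordered pair $\{x,y\}$ contributes to the integrand exactly when one point lies in $E$ and the other in $\Co E$, and, after symmetrizing, the three terms $\Ll_s(E\cap\Omega,\Co E\cap\Omega)$, $\Ll_s(E\cap\Omega,\Co E\setminus\Omega)$ and $\Ll_s(E\setminus\Omega,\Co E\cap\Omega)$ account precisely for those pairs not entirely contained in $\Co\Omega$. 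Then, given any sequence $\{E_h\}$ of smooth open sets with $E_h\xrightarrow{loc}E$, I would pass to a subsequence realizing $\liminf_h P_s(E_h,\Omega)$ and then to a further subsequence along which $\chi_{E_h}\to\chi_E$ a.e.\ in $\R^n$, so that the integrand above converges a.e.\ on $\R^{2n}$; Fatou's lemma then yields $P_s(E,\Omega)\le\liminf_h P_s(E_h,\Omega)$, and taking the infimum over all admissible sequences gives the claim.

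For the reverse inequality $I(E,\Omega)\le P_s(E,\Omega)$ I would distinguish two cases. If $P_s(E,\Omega)=+\infty$ there is nothing to prove. If $P_s(E,\Omega)<\infty$, then $E$ has finite $s$-perimeter in $\Omega$ and Theorem~\ref{appro_in_bded_open} furnishes a sequence $\{E_h\}$ of smooth open sets with $E_h\xrightarrow{loc}E$ and $\lim_{h\to\infty}P_s(E_h,\Omega)=P_s(E,\Omega)$; this sequence is an admissible competitor in the infimum defining $I(E,\Omega)$, and for it $\liminf_h P_s(E_h,\Omega)=P_s(E,\Omega)$, whence $I(E,\Omega)\le P_s(E,\Omega)$. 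Combining the two inequalities gives the asserted equality.

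I do not expect any genuine obstacle. The only points that deserve a moment of care are the double passage to subsequences in the first inequality — needed so that Fatou's lemma is applied along a sequence that simultaneously realizes the $\liminf$ and converges a.e.\ — and the (harmless) observation that when $P_s(E,\Omega)=+\infty$ the infimum is automatically $+\infty$, because lower semicontinuity forces $\liminf_h P_s(E_h,\Omega)=+\infty$ for every admissible sequence (and if there were no admissible sequences, the infimum would be $+\infty$ by convention).
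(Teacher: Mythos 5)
Your argument is correct and coincides with the paper's intended proof: the paper derives this corollary exactly from the lower semicontinuity of the $s$-perimeter (its Proposition on semicontinuity, itself proved via Fatou's lemma along an a.e.\ convergent subsequence, just as you do) together with Theorem \ref{appro_in_bded_open}, which supplies the optimal competitor sequence.
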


For similar approximation results see also \cite{BVquantitative} and \cite{Planelike}.

\end{subsection}


\begin{subsection}{Nonlocal minimal surfaces}

First of all we give the definition of (locally) $s$-minimal sets.

\begin{defin}
Let $\Omega\subset\R^n$ be an open set and let $s\in(0,1)$. We say that a set $E\subset\R^n$
is $s$-minimal in $\Omega$ if $P_s(E,\Omega)<\infty$ and
\begin{equation*}
F\setminus\Omega=E\setminus\Omega\quad\Longrightarrow\quad P_s(E,\Omega)\leq P_s(F,\Omega).
\end{equation*}
We say that a set $E\subset\R^n$ is locally $s$-minimal in $\Omega$ if it is $s$-minimal in every $\Omega'\subset\subset\Omega$.
\end{defin}

When the open set $\Omega\subset\R^n$ is bounded and has Lipschitz boundary, the notions of $s$-minimal set and locally $s$-minimal set coincide.

\begin{teo}\label{confront_min_teo}
Let $\Omega\subset\R^n$ be a bounded open set with Lipschitz boundary and let $E\subset\R^n$. The following are equivalent

$\quad(i)$ $E$ is $s$-minimal in $\Omega$;

$\quad(ii)$ $P_s(E,\Omega)<\infty$ and
\begin{equation*}
P_s(E,\Omega)\leq P_s(F,\Omega)\qquad
\textrm{for every }F\subset\R^n\quad\textrm{s.t.}\quad E\Delta F\subset\subset\Omega;
\end{equation*}

$\quad(iii)$ $E$ is locally $s$-minimal in $\Omega$.

\end{teo}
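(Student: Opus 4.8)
The plan is to prove the cycle of implications $(i)\Rightarrow(ii)\Rightarrow(iii)\Rightarrow(i)$, where the genuinely nontrivial step will be $(iii)\Rightarrow(i)$.

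\medskip

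\emph{Proof outline.}

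\textbf{Step 1: $(i)\Rightarrow(ii)$.} This is immediate: if $E\Delta F\subset\subset\Omega$ then in particular $F\setminus\Omega=E\setminus\Omega$, so the $s$-minimality of $E$ in $\Omega$ gives $P_s(E,\Omega)\le P_s(F,\Omega)$, and $P_s(E,\Omega)<\infty$ by definition of $s$-minimal set.

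\textbf{Step 2: $(ii)\Rightarrow(iii)$.} Fix $\Omega'\subset\subset\Omega$ and a competitor $F$ with $F\setminus\Omega'=E\setminus\Omega'$; we must show $P_s(E,\Omega')\le P_s(F,\Omega')$ (we may assume $P_s(F,\Omega')<\infty$, else nothing to prove). Since $E\Delta F\subset\Omega'\subset\subset\Omega$, hypothesis $(ii)$ applies and yields $P_s(E,\Omega)\le P_s(F,\Omega)$. The point is now to localize this inequality from $\Omega$ down to $\Omega'$. Because $E$ and $F$ agree outside $\Omega'$, the contributions to $P_s(\cdot,\Omega)$ coming from interactions not touching $\Omega'$ are identical for $E$ and $F$; more precisely, writing $\Omega$ as the disjoint-up-to-boundary union of $\Omega'$ and $\Omega\setminus\overline{\Omega'}$ (a fattened version to be safe), one decomposes $P_s(\cdot,\Omega)$ into a part supported near $\Omega'$ and a part depending only on the common set $E\setminus\Omega'=F\setminus\Omega'$. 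This is essentially the additivity/locality property of $P_s$ used throughout the paper; subtracting the common finite terms from $P_s(E,\Omega)\le P_s(F,\Omega)$ gives $P_s(E,\Omega')\le P_s(F,\Omega')$. (One must check the subtracted quantities are finite, which follows since $P_s(E,\Omega)<\infty$.) Since also $P_s(E,\Omega')\le P_s(E,\Omega)<\infty$, $E$ is $s$-minimal in $\Omega'$, and as $\Omega'$ was arbitrary, $E$ is locally $s$-minimal in $\Omega$.

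\textbf{Step 3: $(iii)\Rightarrow(i)$.} This is the crux. Assume $E$ is locally $s$-minimal in $\Omega$. The first task is to show $P_s(E,\Omega)<\infty$; here is where the Lipschitz boundary of the bounded set $\Omega$ is needed. The idea is to compare $E$, in a slightly larger domain, with the competitor $E\setminus\Omega$ (i.e.\ fill in or empty out $\Omega$). One picks an exhaustion $\Omega_k\nearrow\Omega$ by smooth open sets with $\Omega_k\subset\subset\Omega$; local $s$-minimality gives $P_s(E,\Omega_k)\le P_s(E\setminus\Omega_k,\Omega_k)$, and one needs a uniform bound on the right-hand side. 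The Lipschitz regularity of $\partial\Omega$ ensures that the half-space-like competitor $E\setminus\Omega$ has finite $s$-perimeter in $\Omega$ — indeed $P_s(\Omega)<\infty$ for bounded Lipschitz $\Omega$, hence $P_s(E\setminus\Omega,\Omega)$ is controlled — and by monotonicity \eqref{sup_for_per_eq} one concludes $P_s(E,\Omega)=\sup_k P_s(E,\Omega_k)<\infty$. Once finiteness is established, take any $F$ with $F\setminus\Omega=E\setminus\Omega$ and (WLOG) $P_s(F,\Omega)<\infty$. We want $P_s(E,\Omega)\le P_s(F,\Omega)$. The difficulty is that $E\Delta F$ need \emph{not} be compactly contained in $\Omega$, so we cannot directly invoke $s$-minimality in a single $\Omega'$. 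The remedy is an approximation/cutoff argument: construct modified competitors $F_k$ that coincide with $E$ in $\Omega\setminus\Omega_k$ and with $F$ inside $\Omega_{k-1}$ (interpolating in the thin shell $\Omega_k\setminus\overline{\Omega_{k-1}}$), so that $E\Delta F_k\subset\subset\Omega$; local minimality in $\Omega_k$ (or a slightly larger compactly contained set) gives $P_s(E,\Omega_k)\le P_s(F_k,\Omega_k)$. Then one passes to the limit $k\to\infty$: the left side tends to $P_s(E,\Omega)$ by \eqref{sup_for_per_eq}, while the right side must be shown to converge to $P_s(F,\Omega)$ — this requires that the ``error'' introduced by switching from $F$ to $F_k$ in the shell $\Omega_k\setminus\overline{\Omega_{k-1}}$ vanishes as $k\to\infty$, which is where one uses that the shells shrink toward $\partial\Omega$ together with the finiteness $P_s(E,\Omega),P_s(F,\Omega)<\infty$ and the Lipschitz regularity of $\partial\Omega$ (to control interactions across the shell via the kernel $|x-y|^{-n-s}$). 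I expect this convergence estimate in the vanishing shell to be the main obstacle, and the Lipschitz hypothesis on $\partial\Omega$ to enter precisely here and in the finiteness step; the approximation Theorem~\ref{appro_in_bded_open} and the ``sup'' identity \eqref{sup_for_per_eq} are the natural tools. Having shown $P_s(E,\Omega)\le P_s(F,\Omega)$ for all such $F$, $E$ is $s$-minimal in $\Omega$, closing the cycle. $\qed$
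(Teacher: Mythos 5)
Your outline follows the paper's proof essentially step for step: the trivial implication, localization of $(ii)$ down to $\Omega'$ via the additivity identity of Proposition \ref{subopensets}, finiteness of $P_s(E,\Omega)$ by comparing $E$ with $E\setminus\Omega_k$ in $\Omega_k$ together with a uniform bound on $P_s(\Omega_k)$, and the cut-and-paste competitors $F_k=(F\cap\Omega_k)\cup(E\setminus\Omega_k)$ whose error relative to $F$ is controlled by $4\,\Ll_s(\Omega_k,\Omega\setminus\Omega_k)\longrightarrow0$. The two estimates you defer are exactly the paper's \eqref{unif_bound_lip_frac_per} and \eqref{uniform_bound_strips}, and both require choosing $\Omega_k=\{\bar{d}_\Omega<-1/k\}$ (level sets of the signed distance, so that $\sup_k\Ha^{n-1}(\partial\Omega_k)<\infty$ by Proposition \ref{bound_perimeter_unif}) rather than an arbitrary smooth exhaustion; also, no interpolation in a shell is needed, since a sharp cut along $\partial\Omega_k$ already yields an admissible competitor in $\Omega_k$.
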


 We remark that a set as in $(ii)$ is called a local minimizer for $P_s(-,\Omega)$ in \cite{Gamma} and a ``nonlocal area minimizing surface'' in $\Omega$ in \cite{DelPino}.

\begin{rmk}
The implications $(i)\Longrightarrow(ii)\Longrightarrow(iii)$ actually hold in any open set $\Omega\subset\R^n$.
\end{rmk}

In \cite{CRS} the authors proved that if $\Omega$ is a bounded open set with Lipschitz boundary,
then given any fixed set $E_0\subset\R^n$ we can find a set $E$ which is $s$-minimal in $\Omega$
and such that $E\setminus\Omega=E_0\setminus\Omega$.

This is because
\begin{equation*}
P_s(E_0\setminus\Omega,\Omega)\leq P_s(\Omega)<\infty,
\end{equation*}
so the exterior datum $E_0\setminus\Omega$ is itself an admissible competitor with finite $s$-perimeter in $\Omega$ and we can use
the direct method of the Calculus of Variations to obtain a minimizer.

In Section 2.3 we prove a compactness property which we use in Section 4.3 to
prove the following existence results, which extend that of \cite{CRS}.

\begin{teo}\label{glob_min_exist}
Let $\Omega\subset\R^n$ be an open set and let $E_0\subset\R^n$. Then there exists
a set $E\subset\R^n$ $s$-minimal in $\Omega$, with $E\setminus\Omega=E_0\setminus\Omega$, if and only if
there exists a set $F\subset\R^n$, with $F\setminus\Omega=E_0\setminus\Omega$ and such that
$
P_s(F,\Omega)<\infty.
$
\end{teo}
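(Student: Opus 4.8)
The plan is to prove the two implications separately; the forward direction (existence of an $s$-minimal set implies existence of a competitor with finite $s$-perimeter) is immediate, so the substance is the converse. Assume we are given $F$ with $F\setminus\Omega=E_0\setminus\Omega$ and $P_s(F,\Omega)<\infty$. First I would set up the admissible class
\[
\mathcal A:=\{G\subset\R^n\,|\,G\setminus\Omega=E_0\setminus\Omega\},
\]
which is nonempty (it contains $F$) and contains a competitor of finite energy. The natural move is the direct method: take a minimizing sequence $\{G_h\}\subset\mathcal A$ with $P_s(G_h,\Omega)\to\inf_{\mathcal A}P_s(-,\Omega)=:m<\infty$; we may assume $P_s(G_h,\Omega)\le m+1$ for all $h$. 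The two things to extract are a limit set $E\in\mathcal A$ and lower semicontinuity of $P_s(-,\Omega)$ along the convergence.

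The main obstacle is compactness of the minimizing sequence, since $\Omega$ is an arbitrary (possibly unbounded, possibly irregular) open set and a uniform bound on $P_s(G_h,\Omega)$ does \emph{not} control $G_h$ near $\partial\Omega$ nor at infinity. The remedy is a local compactness argument combined with a diagonal extraction. The key point is that the bound $P_s(G_h,\Omega)\le m+1$ gives, for every $\Omega'\subset\subset\Omega$, a uniform bound on $P_s(G_h,\Omega')$ (by the monotonicity/locality relation underlying \eqref{sup_for_per_eq}, or directly since $P_s(G_h,\Omega')\le P_s(G_h,\Omega)$ whenever $\Omega'\subset\subset\Omega$ — one should check the precise inclusion of interaction terms here, but morally $P_s$ is monotone in the domain when one restricts attention to $\chi_{G_h}$ coinciding outside $\Omega$). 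Exhausting $\Omega$ by an increasing sequence $\Omega_j\subset\subset\Omega_{j+1}\subset\subset\Omega$ with $\bigcup_j\Omega_j=\Omega$, the fractional Sobolev compact embedding (applied on a ball slightly larger than $\Omega_j$, using that the $G_h$ agree with $E_0$ outside $\Omega$ so the relevant $W^{s,1}$-type seminorm on that ball is controlled) yields, on each $\Omega_j$, a subsequence of $\chi_{G_h}$ converging in $L^1_{loc}$ and a.e.; a diagonal argument then produces a single subsequence (not relabeled) with $\chi_{G_h}\to\chi_E$ in $L^1_{loc}(\R^n)$ and a.e., for some measurable $E$. Since $\chi_{G_h}=\chi_{E_0}$ a.e. on $\Co\Omega$ for all $h$, the a.e.\ limit satisfies $E\setminus\Omega=E_0\setminus\Omega$, i.e. $E\in\mathcal A$.

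It remains to show $P_s(E,\Omega)\le m$, which forces $E$ to be $s$-minimal. For this I would use lower semicontinuity of $P_s(-,\Omega)$ under $L^1_{loc}$ convergence. On each $\Omega'\subset\subset\Omega$ we have, by Fatou applied to the defining double integrals (the integrands $|\chi_{G_h}(x)-\chi_{G_h}(y)|/|x-y|^{n+s}$ converge a.e.\ along the subsequence),
\[
P_s(E,\Omega')\le\liminf_{h\to\infty}P_s(G_h,\Omega')\le\liminf_{h\to\infty}P_s(G_h,\Omega)=m.
\]
Taking the supremum over $\Omega'\subset\subset\Omega$ and invoking \eqref{sup_for_per_eq} gives $P_s(E,\Omega)=\sup_{\Omega'\subset\subset\Omega}P_s(E,\Omega')\le m<\infty$. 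In particular $P_s(E,\Omega)<\infty$, and since $E\in\mathcal A$ we get $P_s(E,\Omega)\le P_s(G,\Omega)$ for every $G$ with $G\setminus\Omega=E\setminus\Omega=E_0\setminus\Omega$; hence $E$ is $s$-minimal in $\Omega$ with the prescribed exterior datum. I expect the only genuinely delicate point to be the careful handling of the nonlocal interaction terms when passing from a bound on $P_s(-,\Omega)$ to a bound on $P_s(-,\Omega')$ and in the Fatou step — one must make sure that the cross terms $\mathcal L_s(G_h\cap\Omega',\Co G_h\setminus\Omega')$ are controlled, which follows because they split into a piece inside $\Omega$ (bounded by $P_s(G_h,\Omega)$) and a piece interacting with the fixed exterior datum $E_0\setminus\Omega$ (bounded once $\Omega'\subset\subset\Omega$, using $P_s(F,\Omega)<\infty$ together with finiteness of $\mathcal L_s(\Omega',\Co\Omega)$-type quantities).
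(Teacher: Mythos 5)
Your proposal is correct and follows essentially the same route as the paper: the direct method, local compactness from the compact $W^{s,1}$ embedding on an exhaustion $\Omega_j\subset\subset\Omega_{j+1}\subset\subset\Omega$ combined with a diagonal extraction (this is exactly Proposition \ref{compact_prop}), recovery of the exterior datum in the limit, and lower semicontinuity of $P_s(-,\Omega)$ via Fatou (Proposition \ref{semicont_first_prop}). The one imprecision is in the compact embedding step: a ball ``slightly larger than $\Omega_j$'' need not be contained in $\Omega$, and on $\Co\Omega$ the fixed datum $E_0$ gives no control of the $W^{s,1}$ seminorm (it may well have infinite $s$-perimeter there); the clean fix, which is what the paper does, is to choose the exhausting sets $\Omega_j$ with smooth boundary so that each is itself an extension domain and to apply the embedding directly on $\Omega_j$, using $[\chi_{G_h}]_{W^{s,1}(\Omega_j)}=2P_s^L(G_h,\Omega_j)\le 2P_s(G_h,\Omega)$.
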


An immediate consequence of this Theorem is the existence of $s$-minimal sets in open sets having finite $s$-perimeter.
\begin{coroll}
Let $s\in(0,1)$ and let $\Omega\subset\R^n$ be an open set such that
\[P_s(\Omega)<\infty.\]
Then for every $E_0\subset\R^n$
there exists a set $E\subset\R^n$ $s$-minimal in $\Omega$, with $E\setminus\Omega=E_0\setminus\Omega$.
\end{coroll}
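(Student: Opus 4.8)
The plan is to deduce the Corollary directly from Theorem \ref{glob_min_exist} by exhibiting a single competitor with finite $s$-perimeter in $\Omega$ and the prescribed exterior datum. The natural choice is the exterior datum itself, namely $F:=E_0\setminus\Omega$. By construction $F\setminus\Omega=(E_0\setminus\Omega)\setminus\Omega=E_0\setminus\Omega$, so $F$ agrees with $E_0$ outside $\Omega$, exactly as required in the statement of Theorem \ref{glob_min_exist}.

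It then remains only to check that $P_s(F,\Omega)<\infty$. Since $F\cap\Omega=\emptyset$, in the definition of $P_s(F,\Omega)$ the two interaction terms involving $F\cap\Omega$ vanish, and only $\Ll_s(F\setminus\Omega,\Co F\cap\Omega)$ survives. Moreover $F\setminus\Omega=E_0\setminus\Omega\subset\Co\Omega$, while $\Co F\cap\Omega=\Omega$ because $F$ is disjoint from $\Omega$; hence, by monotonicity of $\Ll_s$ in each of its arguments,
\[
P_s(E_0\setminus\Omega,\Omega)=\Ll_s(E_0\setminus\Omega,\Omega)\leq\Ll_s(\Co\Omega,\Omega)=P_s(\Omega)<\infty,
\]
where the last equality is simply the definition of $P_s(\Omega)=P_s(\Omega,\R^n)$, the remaining two terms in $P_s(\Omega,\R^n)$ being zero since $\Co\Omega\setminus\R^n=\Omega\setminus\R^n=\emptyset$. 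Thus $F=E_0\setminus\Omega$ satisfies the hypothesis of Theorem \ref{glob_min_exist}, which therefore yields a set $E$ that is $s$-minimal in $\Omega$ with $E\setminus\Omega=E_0\setminus\Omega$.

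I do not expect any real obstacle here: the argument is exactly the one already recalled in the text for bounded Lipschitz domains (where one uses $P_s(E_0\setminus\Omega,\Omega)\leq P_s(\Omega)<\infty$), and the only point worth making explicit is that the inequality $P_s(E_0\setminus\Omega,\Omega)\leq P_s(\Omega)$ holds for an \emph{arbitrary} open set $\Omega$, so that the assumption $P_s(\Omega)<\infty$ is precisely what is needed in order to apply Theorem \ref{glob_min_exist}.
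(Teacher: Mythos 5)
Your proposal is correct and follows exactly the paper's argument (sketched in the introduction and in Remark \ref{rmk_crs_existence}): one takes $F=E_0\setminus\Omega$ as competitor, notes that $P_s(E_0\setminus\Omega,\Omega)=\Ll_s(E_0\setminus\Omega,\Omega)\leq\Ll_s(\Co\Omega,\Omega)=P_s(\Omega)<\infty$, and applies Theorem \ref{glob_min_exist}. Your explicit verification that this inequality needs no regularity or boundedness of $\Omega$ is precisely the point the paper makes.
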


Even if we cannot find a competitor with finite $s$-perimeter, we can always find a locally $s$-minimal set.

\begin{coroll}\label{loc_min_set_cor}
Let $\Omega\subset\R^n$ be an open set and let $E_0\subset\R^n$.
Then there exists a set $E\subset\R^n$ locally $s$-minimal in $\Omega$, with $E\setminus\Omega=E_0\setminus\Omega$.
\end{coroll}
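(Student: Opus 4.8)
The plan is to exhaust $\Omega$ by nice bounded open sets, solve the minimization problem with datum $E_0$ on each piece (where existence is already available), and then let the pieces invade $\Omega$, using the compactness of Section 2.3 and a diagonalization to produce the locally $s$-minimal set.

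First I would fix an increasing sequence of bounded open sets $\Omega_k$ with smooth boundary such that $\Omega_k\subset\subset\Omega_{k+1}$ and $\bigcup_k\Omega_k=\Omega$; such an exhaustion always exists (e.g.\ via regularized distance functions and Sard's Theorem). Each $\Omega_k$ is then a bounded open set with Lipschitz boundary, so in particular $P_s(\Omega_k)<\infty$. For every $k$ the set $E_0\setminus\Omega_k$ is a competitor in $\Omega_k$ with the correct exterior datum and with $P_s(E_0\setminus\Omega_k,\Omega_k)\le P_s(\Omega_k)<\infty$; hence, by the existence result of \cite{CRS} (equivalently, the bounded Lipschitz case of Theorem \ref{glob_min_exist}) there is a set $E_k$ which is $s$-minimal in $\Omega_k$ with $E_k\setminus\Omega_k=E_0\setminus\Omega_k$. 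Since $\Co\Omega\subseteq\Co\Omega_k$, this already yields $E_k\setminus\Omega=E_0\setminus\Omega$ for every $k$, so the exterior datum in $\Omega$ is automatically fixed along the whole sequence.

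Next I would derive, for each fixed $j$, a uniform $s$-perimeter bound for $\{E_k\}$ in a neighborhood of $\Omega_j$. For $k\ge j+1$ we have $\overline{\Omega_j}\subset\Omega_{j+1}\subseteq\Omega_k$, so by the implication $(i)\Rightarrow(iii)$ — which holds in any open set, see the Remark following Theorem \ref{confront_min_teo} — the set $E_k$ is $s$-minimal in $\Omega_{j+1}$, hence also in $\Omega_j$. Comparing $E_k$ with the admissible competitor $E_k\cup\Omega_{j+1}$ (which coincides with $E_k$ outside $\Omega_{j+1}$ and is ``full'' inside) gives $P_s(E_k,\Omega_{j+1})\le P_s(E_k\cup\Omega_{j+1},\Omega_{j+1})\le P_s(\Omega_{j+1})$, so $\sup_{k\ge j+1}P_s(E_k,\Omega_{j+1})<\infty$. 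Now the compactness and closure results for $s$-minimal sets of Section 2.3 apply: on each $\Omega_j$ a subsequence of $\{E_k\}$ converges in $L^1(\Omega_j)$ to a set that is $s$-minimal in $\Omega_j$, and a diagonal argument over $j$ yields a single subsequence $E_{k_i}$ and a set $E$ with $E_{k_i}\to E$ in $L^1_{loc}(\Omega)$, the set $E$ being $s$-minimal in every $\Omega_j$. Since any $\Omega'\subset\subset\Omega$ is contained in some $\Omega_j$, applying $(i)\Rightarrow(iii)$ once more (inside $\Omega_{j+1}$) shows $E$ is $s$-minimal in $\Omega'$, i.e.\ locally $s$-minimal in $\Omega$; and since $E_{k_i}=E_0$ a.e.\ on $\Co\Omega$ for every $i$, the limit satisfies $E\setminus\Omega=E_0\setminus\Omega$.

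The only step that is not pure bookkeeping is the uniform perimeter estimate needed to activate the compactness; this is exactly where the comparison with $E_k\cup\Omega_{j+1}$ and the finiteness $P_s(\Omega_{j+1})<\infty$ enter. Everything else — existence on bounded Lipschitz domains, the locality of $s$-minimality, and the closure/compactness of $s$-minimizers — is provided by the results already at our disposal.
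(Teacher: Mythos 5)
Your proof is correct and follows essentially the same route as the paper's: minimize with datum $E_0$ on an increasing exhaustion by smooth bounded open sets, obtain the uniform local perimeter bound by comparison with a trivial competitor (the paper uses $E_h\setminus\Omega_k$ in Remark \ref{min_app_seq_rmk}, you use $E_k\cup\Omega_{j+1}$ --- both give the bound $P_s(\Omega_{j+1})<\infty$), and pass to the limit via the compactness of Proposition \ref{compact_prop} together with the closure Theorem \ref{minimal_comp}. No gaps.
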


In Section 4.2 we also prove compactness results for (locally) $s$-minimal sets
(by slightly modifying the proof of Theorem 3.3 of \cite{CRS}, which proved compactness for $s$-minimal sets in a ball).
Namely, we prove that every limit set of a sequence of (locally) $s$-minimal sets is itself (locally) $s$-minimal.

\begin{teo}\label{minimal_comp}
Let $\Omega\subset\R^n$ be a bounded open set with Lipschitz boundary. Let $\{E_k\}$ be a sequence of $s$-minimal sets in $\Omega$, with $E_k\xrightarrow{loc}E$. Then $E$ is $s$-minimal in $\Omega$ and
\begin{equation}\label{conv_perimeter}
P_s(E,\Omega)=\lim_{k\to\infty}P_s(E_k,\Omega).
\end{equation}

\end{teo}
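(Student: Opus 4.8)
The plan is to adapt the compactness argument of \cite{CRS} (there carried out in a ball) to a general bounded Lipschitz domain, the one genuinely new point being a careful treatment of the ``mixed'' interaction terms of $P_s(\cdot,\Omega)$ that involve the exterior of $\Omega$, where only $L^1_{loc}$ convergence is available.

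\smallskip
\emph{Step 1: uniform bound and lower semicontinuity.} First I would note that, since $\Omega$ is bounded with Lipschitz boundary, $P_s(\Omega)<\infty$. Testing the $s$-minimality of each $E_k$ against the admissible competitor $E_k\setminus\Omega$ gives
\[
P_s(E_k,\Omega)\le P_s(E_k\setminus\Omega,\Omega)=\mathcal L_s(E_k\setminus\Omega,\Omega)\le\mathcal L_s(\Co\Omega,\Omega)=P_s(\Omega),
\]
so $\sup_k P_s(E_k,\Omega)\le P_s(\Omega)<\infty$. Then, from $\chi_{E_k}\to\chi_E$ in $L^1_{loc}$ and Fatou's lemma applied to the double integrals defining $P_s(\cdot,\Omega)$ (that is, lower semicontinuity of the $s$-perimeter), one gets $P_s(E,\Omega)\le\liminf_k P_s(E_k,\Omega)\le P_s(\Omega)<\infty$, so $E$ is at least an admissible candidate minimizer.

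\smallskip
\emph{Step 2: the gluing competitors.} Given any $F\subset\R^n$ with $F\setminus\Omega=E\setminus\Omega$, which we may assume to satisfy $P_s(F,\Omega)<\infty$ (the minimality inequality being trivial otherwise), set $F_k:=(F\cap\Omega)\cup(E_k\setminus\Omega)$. Then $F_k\setminus\Omega=E_k\setminus\Omega$, so minimality of $E_k$ yields $P_s(E_k,\Omega)\le P_s(F_k,\Omega)$. Splitting $P_s(F_k,\Omega)$ according to whether points lie inside or outside $\Omega$, and using that $F_k$ agrees with $F$ in $\Omega$ and with $E_k$ outside $\Omega$, one finds
\begin{equation*}
P_s(F_k,\Omega)=\mathcal L_s(F\cap\Omega,\Co F\cap\Omega)+\mathcal L_s(F\cap\Omega,\Co E_k\setminus\Omega)+\mathcal L_s(E_k\setminus\Omega,\Co F\cap\Omega).
\end{equation*}
The first term does not depend on $k$. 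I would rewrite the last two as $\int_{\Co\Omega}\chi_{\Co E_k}(y)\,g(y)\,dy$ and $\int_{\Co\Omega}\chi_{E_k}(y)\,h(y)\,dy$, where $g(y):=\int_{F\cap\Omega}|x-y|^{-n-s}\,dx$ and $h(y):=\int_{\Co F\cap\Omega}|x-y|^{-n-s}\,dx$. The crucial observation is that $g,h\in L^1(\Co\Omega)$, because $\int_{\Co\Omega}g\le\mathcal L_s(\Omega,\Co\Omega)=P_s(\Omega)<\infty$ and likewise for $h$. Since $\chi_{E_k}\to\chi_E$ in $L^1_{loc}$, hence a.e. along subsequences, dominated convergence (with dominating functions $g$ and $h$), plus the usual subsequence argument to recover the limit of the full sequence, gives
\[
P_s(F_k,\Omega)\longrightarrow\mathcal L_s(F\cap\Omega,\Co F\cap\Omega)+\mathcal L_s(F\cap\Omega,\Co E\setminus\Omega)+\mathcal L_s(E\setminus\Omega,\Co F\cap\Omega).
\]
Finally, using $E\setminus\Omega=F\setminus\Omega$ (so that $\Co E\setminus\Omega=\Co F\setminus\Omega$ as well), the right-hand side is exactly $P_s(F,\Omega)$.

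\smallskip
\emph{Step 3: conclusion.} Combining the two steps, $P_s(E,\Omega)\le\liminf_k P_s(E_k,\Omega)\le\liminf_k P_s(F_k,\Omega)=P_s(F,\Omega)$ for every admissible $F$, so $E$ is $s$-minimal in $\Omega$. For \eqref{conv_perimeter} I would run Step 2 once more with the particular competitor $F_k:=(E\cap\Omega)\cup(E_k\setminus\Omega)$; since here $E$ itself plays the role of $F$, the same computation gives $P_s(F_k,\Omega)\to P_s(E,\Omega)$, while $P_s(E_k,\Omega)\le P_s(F_k,\Omega)$ by minimality, so $\limsup_k P_s(E_k,\Omega)\le P_s(E,\Omega)$. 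Together with the lower semicontinuity bound of Step 1 this yields $\lim_k P_s(E_k,\Omega)=P_s(E,\Omega)$.

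\smallskip
I expect the main obstacle to be Step 2: passing to the limit in the mixed terms is precisely the point where ``global'' minimizers on a bounded domain behave differently from local ones, and the argument hinges on the global integrability over $\Co\Omega$ of the weights $g,h$, which is where the hypotheses ``$\Omega$ bounded'' and ``$\partial\Omega$ Lipschitz'' (i.e. $P_s(\Omega)<\infty$) are genuinely used; on an unbounded $\Omega$ these weights need not be integrable and the statement can fail.
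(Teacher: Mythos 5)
Your proposal is correct, and the overall architecture coincides with the paper's (which itself adapts Theorem 3.3 of \cite{CRS}): glue the competitors $F_k:=(F\cap\Omega)\cup(E_k\setminus\Omega)$, invoke the minimality of $E_k$, pass to the limit in $P_s(F_k,\Omega)$, and close with lower semicontinuity; the identity \eqref{conv_perimeter} then follows by taking $F=E$. Where you genuinely diverge is in the one nontrivial analytic step, namely showing $P_s(F_k,\Omega)\to P_s(F,\Omega)$. The paper bounds $|P_s(F_k,\Omega)-P_s(F,\Omega)|\leq\Ll_s\big(\Omega,(E_k\Delta E)\setminus\Omega\big)=:b_k$ and proves $b_k\to0$ by splitting $\Co\Omega$ into three regions: the tubular neighborhood $\Omega_{r_0}\setminus\Omega$, treated with the coarea formula for the signed distance function together with the uniform bound \eqref{bound_perimeter_unif_eq} on $\Ha^{n-1}(\{\bar{d}_\Omega=r\})$ and dominated convergence in $r$; a bounded intermediate annulus where the kernel is bounded away from the singularity; and a far region contributing $O(R^{-s})$. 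You instead observe that the two $k$-dependent terms are integrals of $\chi_{\Co E_k}$ and $\chi_{E_k}$ against the fixed weights $g,h$, which lie in $L^1(\Co\Omega)$ precisely because $\Ll_s(\Omega,\Co\Omega)=P_s(\Omega)<\infty$, and then apply dominated convergence (via a.e.\ convergence along subsequences and the standard sub-subsequence argument). Your route is shorter and isolates cleanly that the Lipschitz hypothesis enters only through $P_s(\Omega)<\infty$; the paper's route is more quantitative near $\partial\Omega$ and reuses the uniform estimates on the level sets of $\bar{d}_\Omega$ that it has already developed for other purposes. Both are complete; your Steps 1 and 3 are also sound (the competitor $E_k\setminus\Omega$ is admissible, and $P_s(E_k\setminus\Omega,\Omega)\leq\Ll_s(\Co\Omega,\Omega)=P_s(\Omega)$ gives the uniform bound needed to ensure $P_s(E,\Omega)<\infty$).
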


\begin{coroll}\label{local_minima_comp}
Let $\Omega\subset\R^n$ be an open set. Let $\{E_h\}$ be a sequence of sets locally $s$-minimal
in $\Omega$, with $E_h\xrightarrow{loc}E$. Then $E$ is locally $s$-minimal in $\Omega$ and
\begin{equation}\label{conv_perimeter_locally}
P_s(E,\Omega')=\lim_{h\to\infty}P_s(E_h,\Omega'),\qquad\textrm{for every }\Omega'\subset\subset\Omega.
\end{equation}

\end{coroll}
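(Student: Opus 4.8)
The plan is to reduce the minimality statement to Theorems~\ref{minimal_comp} and~\ref{confront_min_teo} by interposing a regular domain, and then to localize the proof of Theorem~\ref{minimal_comp} for the convergence of the perimeters. Fix an open set $\Omega'\subset\subset\Omega$ and pick a bounded open set $V$ with smooth boundary such that $\overline{\Omega'}\subset V\subset\subset\Omega$. Being locally $s$-minimal in $\Omega$ (and $V\subset\subset\Omega$), each $E_h$ is $s$-minimal in $V$; since $E_h\xrightarrow{loc}E$, Theorem~\ref{minimal_comp} applied with $V$ in place of $\Omega$ gives that $E$ is $s$-minimal in $V$ and $P_s(E_h,V)\to P_s(E,V)$. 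By Theorem~\ref{confront_min_teo} (again with $V$), $E$ is then locally $s$-minimal in $V$, hence $s$-minimal in $\Omega'\subset\subset V$; as $\Omega'$ was an arbitrary compactly contained open subset, $E$ is locally $s$-minimal in $\Omega$. Moreover, directly from the definition each $E_h$ is $s$-minimal in $\Omega'$, so $P_s(E_h,\Omega'),P_s(E,\Omega')<\infty$, and $P_s(E_h,\Omega')\le P_s(E_h,V)$ yields $\sup_h P_s(E_h,\Omega')<\infty$.

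For \eqref{conv_perimeter_locally}, recall that $P_s(G,\Omega)=\Fc(\chi_G,\Omega)$, so that $P_s(G,\,\cdot\,)$ is manifestly monotone and subadditive on open sets; together with the lower semicontinuity of $P_s(\,\cdot\,,\Omega')$ under $L^1_{loc}$ convergence this already gives $P_s(E,\Omega')\le\liminf_h P_s(E_h,\Omega')$. For the reverse inequality it is enough to show that for every $\eps>0$ there is a bounded open set $U$ with smooth boundary, $U\subset\subset\Omega'$, so close to $\Omega'$ that $\sup_{h}P_s(E_h,\Omega'\setminus\overline U)\le\eps$. Granting this, subadditivity yields $P_s(E_h,\Omega')\le P_s(E_h,U)+P_s(E_h,\Omega'\setminus\overline U)$, whence, using $P_s(E_h,U)\to P_s(E,U)$ (Theorem~\ref{minimal_comp} applied in $U$) and monotonicity $P_s(E,U)\le P_s(E,\Omega')$, we get $\limsup_h P_s(E_h,\Omega')\le P_s(E,U)+\eps\le P_s(E,\Omega')+\eps$, and $\eps\to0$ concludes.

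The crux is thus this uniform ``thin shell'' bound, and it is exactly where the nonlocality of $P_s$ is felt, the $E_h$ interacting with their complements across a possibly very irregular $\partial\Omega'$. When $P_s(\Omega')<\infty$ one can bypass it and argue as in Theorem~\ref{minimal_comp}: $(E\cap\Omega')\cup(E_h\setminus\Omega')$ is an admissible competitor for $E_h$ in $\Omega'$, so $P_s(E_h,\Omega')\le P_s\bigl((E\cap\Omega')\cup(E_h\setminus\Omega'),\Omega'\bigr)\to P_s(E,\Omega')$ by dominated convergence, the cross terms being dominated by $\mathcal{L}_s(\Omega',\Co\Omega')=P_s(\Omega')<\infty$. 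In general one exploits that each $E_h$ is $s$-minimal in the \emph{fixed} domain $V$: comparing $E_h$ with $E_h\setminus B_r(x)$ gives the uniform density bound $P_s(E_h,B_r(x))\le P_s(B_1)\,r^{n-s}$ for all balls $B_r(x)\subset\subset V$, and then one covers the shrinking open set $\Omega'\setminus\overline U$ by such balls and sums using subadditivity; the exponent $n-s$ with $s\in(0,1)$ is precisely what makes the resulting estimate small. I expect the book‑keeping in this last step --- choosing the exhaustion of $\Omega'$ so that the thinness of the shell outweighs the number of covering balls, uniformly in $h$, which for very rough $\partial\Omega'$ additionally requires the density and regularity estimates of \cite{CRS} to locate the $E_h$ near $\partial\Omega'$ --- to be the main obstacle. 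Once the thin‑shell bound is established, \eqref{conv_perimeter_locally} holds for every $\Omega'\subset\subset\Omega$, the whole argument being a localized version of the proof of Theorem~\ref{minimal_comp}.
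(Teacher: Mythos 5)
Your first half is fine and is essentially the paper's argument: interposing a smooth bounded $V$ (the paper uses an exhaustion $\Omega_k$ from Corollary \ref{regular_approx_open_sets_coroll}, which is the same device), applying Theorem \ref{minimal_comp} in $V$ to get that $E$ is $s$-minimal there with $P_s(E_h,V)\to P_s(E,V)$, and deducing local $s$-minimality of $E$ in $\Omega$.

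The second half, however, has a genuine gap. You correctly identify the ``thin shell'' bound $\sup_h P_s(E_h,\Omega'\setminus\overline U)\le\eps$ as the crux of your route, but you do not prove it, and the covering argument you sketch does not close for an arbitrary $\Omega'\subset\subset\Omega$: covering $\Omega'\setminus\overline U$ by balls of radius $r$ and summing the clean energy bound $P_s(E_h,B_r)\le P_s(B_1)r^{n-s}$ gives $N\,r^{n-s}$, and for a boundary $\partial\Omega'$ of upper Minkowski dimension $D$ the number of balls scales like $N\sim r^{-D}$, so the total behaves like $r^{n-s-D}$, which does \emph{not} tend to $0$ when $D\ge n-s$ (and nothing prevents $\Omega'$ from being that irregular --- the density estimates of \cite{CRS} concern $\partial E_h$, not $\partial\Omega'$, and cannot repair this). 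The irony is that you have already established everything needed to finish in one line: since $E_h\xrightarrow{loc}E$ and $\lim_h P_s(E_h,V)=P_s(E,V)<\infty$, Proposition \ref{subcont_lem_approx} (applied with $V$ in place of $\Omega$) yields $\lim_h P_s(E_h,\Omega')=P_s(E,\Omega')$ for \emph{every} open $\Omega'\subset V$, with no regularity assumption on $\Omega'$. That proposition is exactly the elementary ``sum of two lower semicontinuous terms whose sum converges'' argument, and it is how the paper concludes; replacing your thin-shell program by this reference makes the proof complete.
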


\begin{subsubsection}{Minimal sets in cylinders}

We have seen that a locally $s$-minimal set always exists, no matter what the domain $\Omega$
or the exterior data $E_0\setminus\Omega$ are.

On the other hand, the only requirement needed for the existence of an $s$-minimal set
is the existence of a competitor with finite $s$-perimeter.\\
We show that even in the case of a regular domain, like the cylinder $\Omega^\infty:=\Omega\times\R$,
with $\Omega\subset\R^n$ bounded with $C^{1,1}$ boundary,
such a competitor might not exist. Roughly speaking,
this is a consequence of the unboundedness of the domain $\Omega^\infty$, which forces the nonlocal part of the $s$-perimeter
to be infinite.

In Section 4.4 we study (locally) $s$-minimal sets in $\Omega^\infty$, with respect to the exterior data
given by the subgraph of a function $v$, that is
\[\Sg(v)=\{(x,t)\,|\,t<v(x)\}.\]

In particular, we consider sets which are $s$-minimal in the ``truncated'' cylinders $\Omega^k:=\Omega\times(-k,k)$,
showing that if the function $v$ is locally bounded, then these $s$-minimal sets cannot ``oscillate'' too much.
Namely their boundaries are constrained in a cylinder $\Omega\times(-M,M)$
independently on $k$.

As a consequence, we can find $k_0$ big enough such that a set $E$ is locally $s$-minimal in $\Omega^\infty$
if and only if it is $s$-minimal in $\Omega^{k_0}$ (see Lemma $\ref{bded_cyl_prop}$ and Proposition $\ref{bded_cyl_coroll}$ for the precise statements).

However, in general a set $s$-minimal in $\Omega^\infty$ does not exist. As an example we prove that
there cannot exist an $s$-minimal set having as exterior data the subgraph of a bounded function.\\

Frst of all, we remark that we can write the fractional perimeter as the sum
\begin{equation*}
P_s(E,\Omega)=P_s^L(E,\Omega)+P_s^{NL}(E,\Omega),
\end{equation*}
where
\begin{equation*}\begin{split}
&P_s^L(E,\Omega):=\mathcal L_s(E\cap\Omega,\Co E\cap\Omega)=\frac{1}{2}[\chi_E]_{W^{s,1}(\Omega)},\\
&
P_s^{NL}(E,\Omega):=\Ll_s(E\cap\Omega,\Co E\setminus\Omega)+\Ll_s(E\setminus\Omega,\Co E\cap\Omega).
\end{split}\end{equation*}
We can think of $P^L_s(E,\Omega)$ as the local part of the fractional perimeter, in the sense that if $|(E\Delta F)\cap\Omega|=0$,
then $P^L_s(F,\Omega)=P^L_s(E,\Omega)$.\\

The main result of Section 4.4 is the following

\begin{teo}\label{bound_unbound_per_cyl_prop}
Let $\Omega\subset\R^n$ be a bounded open set. Let $E\subset\R^{n+1}$ be such that
\begin{equation}\label{bound_hp_forml_subgraph}
\Omega\times(-\infty,-k]\subset E\cap\Omega^\infty\subset\Omega\times(-\infty,k],
\end{equation}
for some $k\in\mathbb N$, and suppose that $P_s(E,\Omega^{k+1})<\infty$. Then
\begin{equation*}
P_s^L(E,\Omega^\infty)<\infty.
\end{equation*}
On the other hand, if
\begin{equation}\label{bound_hp_forml_subgraph2}
\{x_{n+1}\leq-k\}\subset E\subset\{x_{n+1}\leq k\},
\end{equation}
then
\begin{equation*}
P^{NL}_s(E,\Omega^\infty)=\infty.
\end{equation*}
In particular, if $\Omega$ has $C^{1,1}$ boundary and $v\in L^\infty(\R^n)$, there cannot exist an $s$-minimal set in $\Omega^\infty$ with exterior data
\[
\Sg(v)\setminus\Omega^\infty=\{(x,t)\in\R^{n+1}\,|\,x\in\Co\Omega,\quad t<v(x)\}.\]
\end{teo}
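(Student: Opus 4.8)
The plan is to establish the two displayed conclusions separately and then deduce the non-existence statement from the second one together with the confinement results of this section. Throughout set $A:=E\cap\Omega^\infty$ and $B:=\Co E\cap\Omega^\infty$, so that $A,B$ are disjoint, $A\cup B=\Omega^\infty$ up to a null set, and $P_s^L(E,\Omega^\infty)=\Ll_s(A,B)$.

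\emph{First claim ($P_s^L(E,\Omega^\infty)<\infty$).} By \eqref{bound_hp_forml_subgraph}, $\Omega\times(-\infty,-k]\subset A\subset\Omega\times(-\infty,k]$, and hence also $B\subset\Omega\times(-k,\infty)$. Up to null sets I would then split
\[
A=A_0\cup A_1,\qquad B=B_0\cup B_1,\qquad A_0:=\Omega\times(-\infty,-k-1],\quad B_0:=\Omega\times(k+1,\infty),
\]
with $A_1:=E\cap\Omega^{k+1}$ and $B_1:=\Co E\cap\Omega^{k+1}$; note $A_0\cap A_1=B_0\cap B_1=\emptyset$, that $A_1\subset\Omega\times(-k-1,k]$, and that $B_1\subset\Omega\times(-k,k+1)$. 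By bilinearity of $\Ll_s$ over disjoint unions, $\Ll_s(A,B)=\sum_{i,j}\Ll_s(A_i,B_j)$. The central term equals $\Ll_s(A_1,B_1)=P_s^L(E,\Omega^{k+1})\le P_s(E,\Omega^{k+1})<\infty$ by hypothesis. In each of the three remaining terms the two sets involved are separated by a vertical gap of at least $1$ (for instance $\Ll_s(A_0,B_1)$ involves $a\le-k-1$ and $b>-k$, so $b-a>1$), and since $\Omega$ is bounded a direct computation --- integrating the kernel $(|x-y|^2+(a-b)^2)^{-(n+1+s)/2}$ first in the vertical variables, then using Fubini and the boundedness of $\Omega$ --- bounds each such term by a finite constant (depending on $n,s,k$) times $|\Omega|^2$, the relevant one-dimensional tail integral being finite because $n+s>1$. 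Summing the four bounds gives $P_s^L(E,\Omega^\infty)<\infty$. This step is routine; the only care needed is to keep track of which horizontal slab each piece lives in.

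\emph{Second claim ($P_s^{NL}(E,\Omega^\infty)=\infty$).} By \eqref{bound_hp_forml_subgraph2}, $\Omega\times(-\infty,-k]\subset E\cap\Omega^\infty$ and $\Co\Omega\times(k,\infty)\subset\Co E\setminus\Omega^\infty$, so, keeping only one of the two contributions to $P_s^{NL}$ and one ``corner'' of it,
\[
P_s^{NL}(E,\Omega^\infty)\ \ge\ \Ll_s\big(\Omega\times(-\infty,-k],\,\Co\Omega\times(k,\infty)\big).
\]
The heart of the theorem is to show that this last quantity is infinite. I would fix $x\in\Omega$ and, for $|a-b|$ larger than a threshold $T=T(|\Omega|)$, bound the inner integral in $y$ from below by restricting to $\{y\in\Co\Omega:|x-y|\le|a-b|\}$: there the kernel is $\gtrsim|a-b|^{-(n+1+s)}$, while --- because $\Omega$ is bounded --- that set has measure of order $|a-b|^n$ once $|a-b|\ge T$, so the inner integral is $\gtrsim|a-b|^{-(1+s)}$. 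It then remains to note that $\int\!\!\int_{\{a\le-k\}\times\{b>k\}}(b-a)^{-(1+s)}\,da\,db=+\infty$: carrying out the $a$-integration first produces a constant multiple of $(b+k)^{-s}$, and $\int^\infty(b+k)^{-s}\,db=+\infty$ precisely because $s<1$. Heuristically, the complement of the bounded cylinder carries, within horizontal distance $R$ of a point, a volume of order $R^n$, which exactly compensates the $R^{-(n+1+s)}$ decay of the kernel and leaves a non-integrable vertical tail; I expect this divergence estimate to be essentially the only genuine content of the statement.

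\emph{Non-existence in the cylinder.} Suppose $E$ were $s$-minimal in $\Omega^\infty$ with exterior datum $\Sg(v)\setminus\Omega^\infty$ for some $v\in L^\infty(\R^n)$. Then $P_s(E,\Omega^\infty)<\infty$ by the definition of $s$-minimality, and $E$ is in particular locally $s$-minimal in $\Omega^\infty$. Since $\partial\Omega$ is of class $C^{1,1}$ and $v$ is bounded, the confinement results of this section (Lemma~\ref{bded_cyl_prop} and Proposition~\ref{bded_cyl_coroll}) show that the boundary of such an $E$ is trapped in a slab $\Omega\times(-M,M)$ with $M$ controlled by $\|v\|_{L^\infty}$; combining this with the fact that outside $\Omega^\infty$ the set $E$ coincides with $\Sg(v)\setminus\Omega^\infty$, which is contained in $\Co\Omega\times(-\infty,M]$ and contains $\Co\Omega\times(-\infty,-M)$ --- and using the connectedness of $\Omega\times(M,\infty)$, respectively $\Omega\times(-\infty,-M)$, to transfer this behaviour across $\partial\Omega$ --- one obtains $\{x_{n+1}\le-k\}\subset E\subset\{x_{n+1}\le k\}$ for every integer $k>M$. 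The second claim then forces $P_s^{NL}(E,\Omega^\infty)=\infty$, hence $P_s(E,\Omega^\infty)=\infty$, contradicting $s$-minimality. Therefore no such $s$-minimal set can exist.
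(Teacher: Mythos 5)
Your proposal is correct and follows essentially the same route as the paper: the same decomposition of $P_s^L$ into the $\Omega^{k+1}$ part plus vertically separated cross terms for the first claim, the same lower bound $P_s^{NL}\geq\Ll_s(\Omega\times(-\infty,-k],\Co\Omega\times(k,\infty))$ exploited via the fact that $\Co\Omega$ fills a ball of radius $R$ up to a bounded set while the kernel decays like $R^{-(n+1+s)}$, and the same combination of Lemma \ref{bded_cyl_prop} with the boundedness of $v$ for the non-existence statement. The only (immaterial) difference is bookkeeping in the divergence estimate: the paper truncates to annuli $B_T\setminus B_R$ and half-infinite vertical ranges and lets $T\to\infty$, whereas you integrate in the vertical variables last and observe that the resulting tail $\int(b-a)^{-(1+s)}$ diverges because $s<1$; the superfluous ``connectedness'' remark in your last step can simply be dropped, since the confinement outside $\Omega^\infty$ is immediate from the exterior datum.
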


\begin{rmk}
From Theorem $\ref{glob_min_exist}$ we see that if $v\in L^\infty(\R^n)$,
there cannot exist a set $E\subset\R^{n+1}$ such that $E\setminus\Omega^\infty=\Sg(v)\setminus\Omega^\infty$
and $P_s(E,\Omega^\infty)<\infty$.
\end{rmk}

As a consequence of the computations developed in the proof of Theorem $\ref{bound_unbound_per_cyl_prop}$,
in the end of Section 4.4 we also show that we cannot define a ``naive'' fractional nonlocal version
of the area functional as
\begin{equation*}
\mathcal A_s(u,\Omega):=P_s(\Sg(u),\Omega^\infty),
\end{equation*}
since this would be infinite even for very regular functions.\\

To conclude, we remark that as an immediate consequence of Corollary \ref{loc_min_set_cor}
and Theorem 1.1 in \cite{graph}, we obtain an existence result for the Plateau's problem in the class of subgraphs.

\begin{teo}\label{nonparametric_exist_teo}
Let $\Omega\subset\R^n$ be a bounded open set with $C^{1,1}$ boundary.
For every function $v\in C(\R^n)$ there exists a function
$u\in C(\overline{\Omega})$ such that, if
\begin{equation*}
\tilde{u}:=\chi_\Omega u+(1-\chi_\Omega)v,
\end{equation*}
then $\Sg(\tilde{u})$ is locally $s$-minimal in $\Omega^\infty$.
\end{teo}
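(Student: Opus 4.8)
The plan is to combine the abstract existence of a locally $s$-minimal set (Corollary \ref{loc_min_set_cor}) with the regularity/graphicality result for nonlocal minimal graphs quoted from \cite{graph}. First I would apply Corollary \ref{loc_min_set_cor} with ambient open set $\Omega^\infty = \Omega\times\R\subset\R^{n+1}$ and exterior datum $E_0 = \Sg(v)$: this produces a set $E\subset\R^{n+1}$ that is locally $s$-minimal in $\Omega^\infty$ and satisfies $E\setminus\Omega^\infty = \Sg(v)\setminus\Omega^\infty$. So the existence of \emph{a} locally $s$-minimal set is free; the real content is that this set can be taken to be the subgraph of a continuous function $\tilde u$ that agrees with $v$ outside $\Omega$ and is continuous up to $\overline\Omega$.

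The second step is to invoke Theorem 1.1 of \cite{graph}. That result (for $\Omega$ bounded with $C^{1,1}$ boundary and $v\in C(\R^n)$) asserts precisely that the Plateau-type problem for the fractional perimeter in the cylinder, with subgraph exterior data $\Sg(v)$, admits a solution which is itself a subgraph $\Sg(\tilde u)$, with $\tilde u = u$ on $\Omega$ for some $u\in C(\overline\Omega)$ and $\tilde u = v$ on $\Co\Omega$, and that this subgraph is locally $s$-minimal in $\Omega^\infty$. Thus the statement to be proved is essentially a restatement of the combination of these two inputs: Corollary \ref{loc_min_set_cor} guarantees the problem is non-vacuous (a locally $s$-minimal set with the prescribed exterior data exists, which is the natural hypothesis for the graphicality machinery), and \cite{graph} upgrades such a minimizer to a continuous graph. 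I would write out the reduction carefully, checking that the notion of ``locally $s$-minimal in $\Omega^\infty$'' used here matches the one in \cite{graph}, and that $v\in C(\R^n)$ in particular is locally bounded, so that the a priori bound from Theorem \ref{bound_unbound_per_cyl_prop} (boundaries of the truncated minimizers trapped in $\Omega\times(-M,M)$) applies and makes the passage to the unbounded cylinder legitimate.

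The main obstacle is not in this paper at all: it is bundled into the cited Theorem 1.1 of \cite{graph}, whose proof requires building the continuous graph $\tilde u$ — existence of $u\in C(\overline\Omega)$, the fact that the minimizing set is a genuine subgraph rather than merely a set trapped between two horizontal slabs, and attainment of the boundary datum in the continuous sense. Granting that result, the only genuine step here is the glue: one must verify that the locally $s$-minimal set supplied abstractly by Corollary \ref{loc_min_set_cor}, or rather the subgraph produced by \cite{graph}, is well defined in $\Omega^\infty$ despite $P_s^{NL}(\cdot,\Omega^\infty)=\infty$ (Theorem \ref{bound_unbound_per_cyl_prop}); this is exactly why we use the \emph{local} notion of minimality, for which only $P_s(\cdot,\Omega')<\infty$ on $\Omega'\subset\subset\Omega^\infty$ is needed, and the truncation argument behind Lemma \ref{bded_cyl_prop} and Proposition \ref{bded_cyl_coroll} shows this local problem is equivalent to a bounded-cylinder problem to which \cite{graph} directly applies.
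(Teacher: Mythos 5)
Your proposal matches the paper's argument: the paper obtains this theorem exactly as "an immediate consequence of Corollary \ref{loc_min_set_cor} and Theorem 1.1 in \cite{graph}," i.e.\ existence of a locally $s$-minimal set in $\Omega^\infty$ with exterior datum $\Sg(v)$ plus the graphicality/continuity result of \cite{graph}, with the compatibility of the minimality notions handled by Remark \ref{rmk_from_compact_to_any_subset} and Lemma \ref{bded_cyl_prop}. Your reduction is the same as the paper's, including correctly locating the real work inside the cited Theorem 1.1 of \cite{graph}.
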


Notice that, as remarked in \cite{graph}, the function $\tilde{u}$ need not be continuous. Indeed, because of boundary stickiness effects
of $s$-minimal surfaces (see e.g. \cite{boundary}), in general we might have
\[u_{|_{\partial\Omega}}\not=v_{|_{\partial\Omega}}.\]

\end{subsubsection}

\end{subsection}

\begin{subsection}{Notation and assumptions}

\begin{itemize}


\item We write $A\subset\subset B$ to mean that the closure of $A$ is compact and $\overline{A}\subset B$.

\item In $\R^n$ we will usually write
$|E|=\mathcal{L}^n(E)$ for the $n$-dimensional Lebesgue measure of a set $E\subset\R^n$.

\item By $A_h\xrightarrow{loc}A$ we mean that $\chi_{A_h}\longrightarrow\chi_A$ in $L^1_{loc}(\R^n)$,
i.e. for every bounded open set $\Omega\subset\R^n$ we have $|(A_h\Delta A)\cap\Omega|\longrightarrow0$.

\item We write $\Ha^d$ for the $d$-dimensional Hausdorff measure, for any $d\geq0$.

\item We define the dimensional constants
\begin{equation*}
\omega_d:=\frac{\pi^\frac{d}{2}}{\Gamma\big(\frac{d}{2}+1\big)},\qquad d\geq0.
\end{equation*}
In particular, we remark that $\omega_k=\mathcal{L}^k(B_1)$ is the volume of the $k$-dimensional unit ball $B_1\subset\R^k$
and $k\,\omega_k=\Ha^{k-1}(\mathbb{S}^{k-1})$ is the surface area of the $(k-1)$-dimensional sphere
\begin{equation*}
\mathbb{S}^{k-1}=\partial B_1=\{x\in\R^k\,|\,|x|=1\}.
\end{equation*}

\item Since
\begin{equation*}
|E\Delta F|=0\quad\Longrightarrow\quad
P_s(E,\Omega)=P_s(F,\Omega),
\end{equation*}
we can and will implicitly identify sets up to sets of zero measure.\\
In particular, equality and inclusions of sets will usually be considered in the measure sense, e.g. $E=F$ will usually mean
$|E\Delta F|=0$.\\
Moreover, whenever needed we will implicitly choose a particular representative for the class of $\chi_E$ in $L^1_{loc}(\R^n)$, as in the Remark below.

\end{itemize}

\begin{rmk}\label{gmt_assumption}
Let $E\subset\R^n$. Up to modifying $E$ on a set of measure zero, we can assume (see e.g. Appendix C of \cite{mine_fractal}) that
$E$ contains the measure theoretic interior
\begin{equation*}
E_1:=\{x\in\R^n\,|\,\exists\,r>0\textrm{ s.t. }|E\cap B_r(x)|=\omega_nr^n\}\subset E,
\end{equation*}
the complementary $\Co E$ contains its measure theoretic interior
\begin{equation*}
E_0:=\{x\in\R^n\,|\,\exists\,r>0\textrm{ s.t. }|E\cap B_r(x)|=0\}\subset\Co E,
\end{equation*}
and the topological boundary of $E$ coincides with its measure theoretic boundary, $\partial E=\partial^-E$,
where
\begin{equation*}\begin{split}
\partial^-E&:=\R^n\setminus(E_0\cup E_1)\\
&
=\{x\in\R^n\,|\,0<|E\cap B_r(x)|<\omega_nr^n\textrm{ for every }r>0\}.
\end{split}
\end{equation*}

\end{rmk}

\end{subsection}

\end{section}

\begin{section}{Tools}

It is convenient to point out the following easy but useful result.
\begin{prop}\label{subopensets}
Let $\Omega'\subset\Omega\subset\R^n$ be open sets and let $E\subset\R^n$. Then
\begin{equation}\begin{split}
P_s(E,\Omega)=P_s(E,\Omega')&+\Ll_s(E\cap(\Omega\setminus\Omega'),\Co E\setminus\Omega')\\
&
\qquad
+\Ll_s(E\setminus\Omega',\Co E\cap(\Omega\setminus\Omega')).
\end{split}
\end{equation}

As a consequence,

$(i)\quad$ if $E\subset\Omega$, then
\begin{equation*}
P_s(E,\Omega)=P_s(E),
\end{equation*}

$(ii)\quad$ if $E,\,F\subset\R^n$ have finite $s$-perimeter in $\Omega$
and $E\Delta F\subset\Omega'\subset\Omega$, then
\begin{equation}\label{difference_per_in_subsets}
P_s(E,\Omega)-P_s(F,\Omega)=P_s(E,\Omega')-P_s(F,\Omega').
\end{equation}

\end{prop}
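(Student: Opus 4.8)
The plan is to reduce the identity to the additivity of the double integral $\int_A\int_B|x-y|^{-n-s}\,dx\,dy$ over disjoint domains, i.e. to the bilinearity of $\Ll_s$ with respect to disjoint unions, together with its obvious symmetry $\Ll_s(A,B)=\Ll_s(B,A)$. No finiteness is needed here: all integrands are nonnegative, so every quantity below is well defined in $[0,+\infty]$ and these additivity relations hold unconditionally (Tonelli's theorem).

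First I would fix the partition of $\R^n$ determined by $\Omega'\subset\Omega$, namely $\R^n=\Omega'\sqcup(\Omega\setminus\Omega')\sqcup\Co\Omega$, and refine it by $E$ into six measurable cells: $E\cap\Omega'$, $E\cap(\Omega\setminus\Omega')$, $E\setminus\Omega$ and the three analogous cells of $\Co E$. Every set occurring in $P_s(E,\Omega)$, in $P_s(E,\Omega')$, and in the two remaining $\Ll_s$-terms on the right-hand side is a disjoint union of some of these cells; for instance $E\cap\Omega=(E\cap\Omega')\sqcup(E\cap(\Omega\setminus\Omega'))$, $\Co E\cap\Omega=(\Co E\cap\Omega')\sqcup(\Co E\cap(\Omega\setminus\Omega'))$, $\Co E\setminus\Omega'=(\Co E\cap(\Omega\setminus\Omega'))\sqcup(\Co E\setminus\Omega)$, and likewise for $E\setminus\Omega'$.

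The core of the proof is then a finite bookkeeping, which I expect to be the only real obstacle. Using disjoint-union bilinearity I would expand each of the three $\Ll_s$-terms defining $P_s(E,\Omega)$ into a sum of pairwise interactions between these cells, each interaction occurring exactly once; I would do the same for $P_s(E,\Omega')$ and for the two correction terms, and then match the two lists cell-interaction by cell-interaction. Concretely, the only ``cut pair'' of cells not already seen by $P_s(\cdot,\Omega)$ is the interaction of $E\setminus\Omega$ with $\Co E\setminus\Omega$, and the cell-interactions that are genuinely new in passing from $\Omega'$ to $\Omega$ are exactly those picked up by the two correction terms; the point that requires care is to organize these correction terms so that the interaction internal to the annulus $\Omega\setminus\Omega'$, i.e. between $E\cap(\Omega\setminus\Omega')$ and $\Co E\cap(\Omega\setminus\Omega')$, is counted once and not twice.

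Finally, the two consequences drop out quickly. For $(i)$: if $E\subset\Omega$ then $E\cap\Omega=E$ and $E\setminus\Omega=\emptyset$, so directly from the definition and disjoint-union bilinearity $P_s(E,\Omega)=\Ll_s(E,\Co E\cap\Omega)+\Ll_s(E,\Co E\setminus\Omega)=\Ll_s(E,\Co E)=P_s(E)$, without even invoking the main identity. For $(ii)$: apply the main identity to $E$ and to $F$; since $E\Delta F\subset\Omega'$, the sets $E$ and $F$ coincide (up to null sets) outside $\Omega'$, hence $E\cap(\Omega\setminus\Omega')=F\cap(\Omega\setminus\Omega')$, $\Co E\cap(\Omega\setminus\Omega')=\Co F\cap(\Omega\setminus\Omega')$, $E\setminus\Omega'=F\setminus\Omega'$ and $\Co E\setminus\Omega'=\Co F\setminus\Omega'$, so the correction terms are identical for $E$ and for $F$; the finiteness of $P_s(E,\Omega)$ and $P_s(F,\Omega)$ then legitimizes the subtraction, yielding $P_s(E,\Omega)-P_s(F,\Omega)=P_s(E,\Omega')-P_s(F,\Omega')$.
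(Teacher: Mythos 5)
Your cell-decomposition strategy is the right one, and in fact the paper states this proposition without any proof, so there is nothing to compare against: partitioning $\R^n$ into the six cells $A_1=E\cap\Omega'$, $A_2=E\cap(\Omega\setminus\Omega')$, $A_3=E\setminus\Omega$ and the corresponding cells $B_1,B_2,B_3$ of $\Co E$, and expanding every $\Ll_s$-term by disjoint-union additivity (legitimate in $[0,+\infty]$ by Tonelli), is the only argument available. The problem is the point you flag as ``requiring care'': it is not something that can be organized away, and leaving it unresolved is a genuine gap, because the bookkeeping, once completed, shows that the displayed identity as written is false. Indeed $P_s(E,\Omega)$ collects every pair $\Ll_s(A_i,B_j)$ except $(3,3)$, each exactly once, while $P_s(E,\Omega')$ collects the pairs $(1,1),(1,2),(1,3),(2,1),(3,1)$; so the difference is $(2,2)+(2,3)+(3,2)$, each once. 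But the two correction terms in the statement expand to $(2,2)+(2,3)$ and $(2,2)+(3,2)$ respectively: the annulus-internal interaction $\Ll_s(E\cap(\Omega\setminus\Omega'),\Co E\cap(\Omega\setminus\Omega'))$ genuinely occurs in both, so the right-hand side exceeds $P_s(E,\Omega)$ by exactly that quantity. (Sanity check: with $\Omega'=\emptyset$ the right-hand side becomes $P_s(E,\Omega)+\Ll_s(E\cap\Omega,\Co E\cap\Omega)$.) The identity your argument actually proves replaces $E\setminus\Omega'$ by $E\setminus\Omega$ in the last term (equivalently, $\Co E\setminus\Omega'$ by $\Co E\setminus\Omega$ in the middle one); your proof must end by deriving and stating that corrected formula rather than gesturing at the double-counting as a difficulty to be handled.

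With that correction, the rest of your proposal is fine. Point $(i)$ follows directly from the definition exactly as you say. For point $(ii)$, the corrected correction terms are still determined solely by $E\cap\Co\Omega'$ and $\Co E\cap\Co\Omega'$, hence coincide for $E$ and $F$ when $E\Delta F\subset\Omega'$, and each is a partial sum of pairs occurring in $P_s(E,\Omega)<\infty$, so it is finite and may be cancelled; the conclusion \eqref{difference_per_in_subsets} then follows as you describe.
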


\begin{rmk}\label{bded_set_frac_per}
In particular, if $E$ has finite $s$-perimeter in $\Omega$, then it has finite $s$-perimeter also in every open set $\Omega'\subset\Omega$.

\end{rmk}

\begin{subsection}{Bounded open sets with Lipschitz boundary}

Given $\emptyset\not=E\subset\R^n$, the distance function from $E$ is defined as
\begin{equation*}
d_E(x)=d(x,E):=\inf_{y\in E}|x-y|,\qquad\textrm{for }x\in\R^n.
\end{equation*}
The signed distance function from $\partial E$, negative inside $E$, is then defined as
\begin{equation}
\bar{d}_E(x)=\bar{d}(x,E):=d(x,E)-d(x,\Co E).
\end{equation}

We also define for every $r\in\R$ the sets
\begin{equation*}
E_r:=\{x\in\R^n\,|\,\bar{d}_E(x)<r\}.
\end{equation*}
Notice that if $\rho>0$, then
\begin{equation*}
N_\rho(\partial\Omega)=\{|\bar{d}_\Omega|<\rho\}=\Omega_\rho\setminus\overline{\Omega_{-\rho}}
\end{equation*}
is the $\rho$-tubular neighborhood of $\partial\Omega$.

Let $\Omega\subset\R^n$ be a bounded open set with Lipschitz boundary.
It is well known (see e.g.
Theorem 4.1 of \cite{LipApprox}) that also the bounded open sets $\Omega_r$ have
Lipschitz boundary, when $r$ is small enough, say $|r|<r_0$.\\
Notice that
\begin{equation*}
\partial\Omega_r=\{\bar{d}_\Omega=r\}.
\end{equation*}

Moreover the perimeter of $\Omega_r$
can be bounded uniformly in $r\in(-r_0,r_0)$ (see also Appendix B of \cite{mine_fractal} for a more detailed discussion)

\begin{prop}\label{bound_perimeter_unif}
Let $\Omega\subset\R^n$ be a bounded open set with Lipschitz boundary. Then there exists $r_0>0$ such that
$\Omega_r$ is a bounded open set with Lipschitz boundary for every $r\in(-r_0,r_0)$ and
\begin{equation}\label{bound_perimeter_unif_eq}
\sup_{|r|<r_0}\Ha^{n-1}(\{\bar{d}_\Omega=r\})<\infty.
\end{equation}
\end{prop}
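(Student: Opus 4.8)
\textbf{Proof proposal for Proposition \ref{bound_perimeter_unif}.}

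The plan is to reduce the uniform bound to a finite covering argument, using the fact that near any point of a Lipschitz boundary the level sets $\{\bar d_\Omega = r\}$ are, for $|r|$ small, again Lipschitz graphs with controlled Lipschitz constant, so their $\Ha^{n-1}$-measure over a fixed coordinate patch is controlled uniformly in $r$. First I would recall the quantitative statement behind the cited Theorem 4.1 of \cite{LipApprox}: since $\partial\Omega$ is compact and Lipschitz, there exist finitely many points $x_1,\dots,x_N\in\partial\Omega$, radii $\rho_i>0$, rotations, and Lipschitz functions $\varphi_i:\R^{n-1}\to\R$ with Lipschitz constant $\le L$ such that, in suitable coordinates, $\Omega\cap B_{\rho_i}(x_i)=\{x_n<\varphi_i(x')\}\cap B_{\rho_i}(x_i)$ and the balls $B_{\rho_i/2}(x_i)$ already cover $\partial\Omega$. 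The key local fact is that for $|r|<r_0$ (with $r_0$ depending only on $L$ and the $\rho_i$), the set $\Omega_r\cap B_{\rho_i/2}(x_i)$ is the subgraph of a Lipschitz function $\varphi_i^{(r)}$ with Lipschitz constant bounded by a constant $L'=L'(L)$, independent of $r$ — this is exactly the content of the Lipschitz-regularity-of-$\Omega_r$ statement, made quantitative.

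Next I would choose $r_0$ small enough that $N_{r_0}(\partial\Omega)$ is contained in $\bigcup_i B_{\rho_i/2}(x_i)$; this is possible because $N_\rho(\partial\Omega)\searrow\partial\Omega$ as $\rho\to0^+$ and $\partial\Omega$ is compact, so for $\rho$ small the tubular neighborhood is swallowed by any fixed open neighborhood of $\partial\Omega$, in particular by the union of the half-radius balls. Since $\partial\Omega_r=\{\bar d_\Omega=r\}\subset N_{|r|+\delta}(\partial\Omega)$ for any $\delta$, for $|r|<r_0$ we get $\partial\Omega_r\subset\bigcup_{i=1}^N B_{\rho_i/2}(x_i)$. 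Then I would estimate, using the area formula for Lipschitz graphs,
\begin{equation*}
\Ha^{n-1}\big(\{\bar d_\Omega=r\}\big)\le\sum_{i=1}^N\Ha^{n-1}\big(\partial\Omega_r\cap B_{\rho_i/2}(x_i)\big)\le\sum_{i=1}^N\int_{B_{\rho_i/2}'}\sqrt{1+|\nabla\varphi_i^{(r)}|^2}\,dx'\le\sum_{i=1}^N\omega_{n-1}\Big(\frac{\rho_i}{2}\Big)^{n-1}\sqrt{1+L'^2},
\end{equation*}
where $B'_{\rho_i/2}$ denotes the $(n-1)$-dimensional projection. The right-hand side is a finite constant independent of $r$, which gives \eqref{bound_perimeter_unif_eq}.

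The main obstacle is the uniform-in-$r$ Lipschitz bound $L'$ for the graph representation of $\Omega_r$: one must check that the Lipschitz constant of the level sets of $\bar d_\Omega$ does not blow up as $|r|$ grows towards $r_0$, and that the coordinate patches and projection domains can be taken independent of $r$. I would handle this by shrinking the $\rho_i$ and $r_0$ if necessary and invoking the cited result of \cite{LipApprox} together with the elementary fact that $\bar d_\Omega$ is itself $1$-Lipschitz, so that $\{\bar d_\Omega=r\}$ cannot be ``steeper'' than a cone of aperture controlled by $L$; the detailed verification is the computation deferred to Appendix B of \cite{mine_fractal}, to which I would refer for the quantitative estimates, and otherwise the covering-plus-area-formula scheme above is routine.
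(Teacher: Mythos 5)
Your covering-plus-area-formula scheme is the standard route, and it matches what the paper relies on: the paper itself gives no proof of Proposition \ref{bound_perimeter_unif}, deferring exactly as you do to Theorem 4.1 of \cite{LipApprox} for the uniform Lipschitz-graph representation of the level sets and to Appendix B of \cite{mine_fractal} for the quantitative perimeter bound. One caution on your heuristic for the key step: the $1$-Lipschitz bound on $\bar d_\Omega$ alone says nothing about the steepness of its level sets (a $1$-Lipschitz function can have level sets of positive measure); what actually makes $\{\bar d_\Omega=r\}$ an $L'$-Lipschitz graph over the same patch is the \emph{lower} bound $\bar d_\Omega(x+te_n)\geq\bar d_\Omega(x)+t/\sqrt{1+L^2}$ along the cone axis, which follows from the uniform interior/exterior cone condition of the Lipschitz domain and is precisely the content of the cited results. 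With that replacement the argument is complete.
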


As a consequence, exploiting the embedding $BV(\R^n)\hookrightarrow W^{s,1}(\R^n)$ we obtain
a uniform bound for the (global) $s$-perimeters
of the sets $\Omega_r$ (see Corollary 1.2 of \cite{mine_fractal})
\begin{coroll}
Let $\Omega\subset\R^n$ be a bounded open set with Lipschitz boundary. Then there exists $r_0>0$
such that
\begin{equation}\label{unif_bound_lip_frac_per}
\sup_{|r|<r_0}P_s(\Omega_r)<\infty.
\end{equation}
\end{coroll}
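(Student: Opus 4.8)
The plan is to deduce the uniform bound on the fractional perimeters $P_s(\Omega_r)$ from the uniform bound on the classical perimeters in Proposition \ref{bound_perimeter_unif}, by invoking the continuous embedding $BV(\R^n)\hookrightarrow W^{s,1}(\R^n)$ together with the identity $P_s(F)=\frac12[\chi_F]_{W^{s,1}(\R^n)}$. First I would fix $r_0>0$ as in Proposition \ref{bound_perimeter_unif}, so that each $\Omega_r$ with $|r|<r_0$ is a bounded open set with Lipschitz boundary and hence a set of finite (classical) perimeter, and set $C_0:=\sup_{|r|<r_0}\Ha^{n-1}(\{\bar d_\Omega=r\})<\infty$. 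By De Giorgi's structure theorem, $P(\Omega_r)=\Ha^{n-1}(\partial\Omega_r)=\Ha^{n-1}(\{\bar d_\Omega=r\})\leq C_0$ for every such $r$; moreover, since $\Omega$ is bounded, all the $\Omega_r$ are contained in a fixed ball $B_R$, so $|\Omega_r|\leq\omega_nR^n=:V_0$. Thus the characteristic functions $\chi_{\Omega_r}$ are bounded in $BV(\R^n)$ uniformly in $r\in(-r_0,r_0)$, with $\|\chi_{\Omega_r}\|_{BV(\R^n)}=|\Omega_r|+P(\Omega_r)\leq V_0+C_0$.

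Next, I would apply the embedding $BV(\R^n)\hookrightarrow W^{s,1}(\R^n)$: there is a constant $c=c(n,s)>0$ such that $\|u\|_{W^{s,1}(\R^n)}\leq c\,\|u\|_{BV(\R^n)}$ for all $u\in BV(\R^n)$, and in particular the Gagliardo seminorm satisfies $[u]_{W^{s,1}(\R^n)}\leq c\,\|u\|_{BV(\R^n)}$. This is precisely the content of Corollary 1.2 of \cite{mine_fractal} cited in the statement. Applying it to $u=\chi_{\Omega_r}$ and using $P_s(\Omega_r)=P_s(\Omega_r,\R^n)=\frac12[\chi_{\Omega_r}]_{W^{s,1}(\R^n)}$ (which follows from the definition of $P_s$ with $\Omega=\R^n$, since then $\Co\Omega=\emptyset$ and only the first term $\Ll_s(\Omega_r,\Co\Omega_r)$ survives), we get
\begin{equation*}
P_s(\Omega_r)=\frac12[\chi_{\Omega_r}]_{W^{s,1}(\R^n)}\leq\frac{c}{2}\,\|\chi_{\Omega_r}\|_{BV(\R^n)}\leq\frac{c}{2}(V_0+C_0)
\end{equation*}
for every $r\in(-r_0,r_0)$, and taking the supremum over $r$ yields \eqref{unif_bound_lip_frac_per}.

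There is essentially no hard step here: the work has already been done in Proposition \ref{bound_perimeter_unif} (the uniform classical perimeter bound, which is the geometrically substantive fact) and in the embedding theorem $BV\hookrightarrow W^{s,1}$ quoted from \cite{mine_fractal}. The only mild points to be careful about are: (a) checking that the fractional perimeter seminorm controlled by the embedding is indeed the global one $P_s(\Omega_r)=P_s(\Omega_r,\R^n)$, not a localized version — this is immediate from the definition once $\Omega=\R^n$; and (b) recording the uniform volume bound $|\Omega_r|\leq V_0$, which is needed because the embedding constant multiplies the full $BV$ norm rather than just the total variation. If one prefers, one can avoid (b) by using a homogeneous version of the embedding $[\chi_F]_{W^{s,1}(\R^n)}\leq c(n,s)\,P(F)$ valid for sets of finite perimeter, in which case the estimate becomes simply $P_s(\Omega_r)\leq\frac{c}{2}P(\Omega_r)\leq\frac{c}{2}C_0$; either route closes the argument.
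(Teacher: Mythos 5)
Your argument is correct and is exactly the paper's: the corollary is stated there as an immediate consequence of Proposition \ref{bound_perimeter_unif} together with the embedding $BV(\R^n)\hookrightarrow W^{s,1}(\R^n)$ (Corollary 1.2 of \cite{mine_fractal}), via $P_s(\Omega_r)=\frac12[\chi_{\Omega_r}]_{W^{s,1}(\R^n)}$ and the uniform bounds on $P(\Omega_r)$ and $|\Omega_r|$. One caveat: your proposed shortcut for point (b), the ``homogeneous'' inequality $[\chi_F]_{W^{s,1}(\R^n)}\leq c(n,s)\,P(F)$, is false for general sets of finite perimeter (the two sides scale like $\lambda^{n-s}$ and $\lambda^{n-1}$ respectively under dilation $F\mapsto\lambda F$), so the full $BV$ norm, i.e. your route (a)+(b), is the right one to keep.
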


\begin{subsubsection}{Increasing sequences}

In particular, Proposition \ref{bound_perimeter_unif} shows that if $\Omega$ is a bounded open set with Lpschitz boundary,
then we can approximate it strictly from the inside with a sequence of bounded open sets
$\Omega_k:=\Omega_{-1/k}\subset\subset\Omega$. Moreover,
\eqref{bound_perimeter_unif_eq} gives a uniform bound on the measure of the boundaries
of the approximating sets.

Now we prove that any open set $\Omega\not=\emptyset$ can be approximated strictly from the inside with
a sequence of bounded open sets with smooth boundaries.

\begin{prop}\label{first_approx_prop}
Let $\Omega\subset\R^n$ be a bounded open set. For every $\eps>0$ there exists a bounded open set $\mathcal O_\eps\subset\R^n$ with smooth boundary, such that
\begin{equation}
\mathcal O_\eps\subset\subset\Omega\qquad\textrm{and}\qquad\partial\mathcal O_\eps\subset N_\eps(\partial\Omega).
\end{equation}

\begin{proof}

We show that we can approximate the set $\Omega_{-\eps/2}$
with a bounded open set $\mathcal O_\eps$ with smooth boundary such that $\partial\mathcal O_\eps\subset N_{\eps/4}(\partial\Omega_{-\eps/2})$.\\
In general $\mathcal O_\eps\not\subset\Omega_{-\eps/2}$.
However
\begin{equation}\label{eq_app_op}
\mathcal O_\eps\subset N_{\eps/4}(\Omega_{-\eps/2})\subset\subset\Omega
\quad\textrm{and indeed}\quad\Omega_{-3\eps/4}\subset\mathcal O_\eps\subset\Omega_{-\eps/4},
\end{equation}
proving the claim.

Let $u:=\chi_{\Omega_{-\eps/2}}$ and consider the regularized function
\begin{equation*}
v:=u_{\eps/4}=u\ast\eta_{\eps/4}
\end{equation*}
 (see Section 3 for the details about the mollifier $\eta$).
Since $v\in C^\infty(\R^n)$, we know from Sard's Theorem that the superlevel set $\{v>t\}$ is an open set with smooth boundary for a.e. $t\in(0,1)$.
Moreover notice that $0\leq v\leq1$, with
\begin{equation*}
\textrm{supp }v\subset N_{\eps/4}(\textrm{supp }u)=N_{\eps/4}(\Omega_{-\eps/2})\subset\Omega_{-\eps/4},
\end{equation*}
and
\begin{equation*}
v(x)=1\qquad\textrm{for every }x\in\Big\{y\in\Omega_{-\eps/2}\,\big|\,d(y,\partial\Omega_{-\eps/2})>\frac{\eps}{4}\Big\}
\supset\Omega_{-\frac{3}{4}\eps}.
\end{equation*}
This shows that $\mathcal O_\eps:=\{v>t\}$ (for any ``regular'' $t$) satisfies
$(\ref{eq_app_op})$.
\end{proof}
\end{prop}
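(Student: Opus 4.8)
The plan is to \emph{not} approximate $\Omega$ itself — whose boundary could be quite wild — but rather one of its inner parallel sets, which is automatically compactly contained in $\Omega$ and whose characteristic function can be safely mollified without leaving $\Omega$. Fix $\eps>0$. First I would set $u:=\chi_{\Omega_{-\eps/2}}$ and mollify at scale $\eps/4$, i.e.\ consider $v:=u\ast\eta_{\eps/4}\in C^\infty(\R^n)$ (with $\eta$ the mollifier of Section 3, supported in $B_1$, so that $\eta_{\eps/4}$ is supported in $B_{\eps/4}$). Then $0\le v\le1$, and by Sard's Theorem the superlevel set $\{v>t\}$ is an open set with smooth boundary for a.e.\ $t\in(0,1)$; I would fix any such ``regular'' value and define $\mathcal O_\eps:=\{v>t\}$. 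Since $\Omega$ is bounded, $v$ has compact support, so $\mathcal O_\eps$ is automatically bounded.

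The substance of the argument is then to sandwich $\mathcal O_\eps$ between two parallel sets, namely to show
\[
\Omega_{-3\eps/4}\subset\mathcal O_\eps\subset\Omega_{-\eps/4}.
\]
For the right-hand inclusion I would use that mollification enlarges the support by at most $\eps/4$, so $\mathrm{supp}\,v\subset N_{\eps/4}(\Omega_{-\eps/2})$, together with the elementary fact that an $\eps/4$-neighborhood of a point at signed distance $<-\eps/2$ from $\partial\Omega$ still sits at signed distance $<-\eps/4$, i.e.\ $N_{\eps/4}(\Omega_{-\eps/2})\subset\Omega_{-\eps/4}$. For the left-hand inclusion I would note that $v\equiv1$ on the set of points of $\Omega_{-\eps/2}$ lying at distance $>\eps/4$ from $\partial\Omega_{-\eps/2}$, that this set contains $\Omega_{-3\eps/4}$ (again a one-line estimate with the distance function), and that $t<1$.

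From the sandwich everything follows at once. On the one hand $\overline{\mathcal O_\eps}\subset\overline{\Omega_{-\eps/4}}\subset\{\bar d_\Omega\le-\eps/4\}\subset\Omega$, and this closed set is compact since $\Omega$ is bounded, so $\mathcal O_\eps\subset\subset\Omega$. On the other hand, $\mathcal O_\eps$ is open and contains $\Omega_{-3\eps/4}$, so $\partial\mathcal O_\eps$ is disjoint from $\Omega_{-3\eps/4}$ while contained in $\overline{\Omega_{-\eps/4}}$; hence $\partial\mathcal O_\eps\subset\{-\tfrac34\eps\le\bar d_\Omega\le-\tfrac14\eps\}$, which consists of points at distance between $\eps/4$ and $3\eps/4$ from $\partial\Omega$, so in particular $\partial\mathcal O_\eps\subset N_\eps(\partial\Omega)$. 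I do not anticipate any real obstacle: the only thing to be careful about is the bookkeeping of strict versus non-strict inclusions among the $\Omega_r$ and their interaction with the radius-$\eps/4$ mollification, and it is precisely to leave room for this that I split the available slack $\eps/2$ into the three scales $\eps/2,\eps/4,\eps/4$ rather than trying to work at a single scale. Degenerate cases (e.g.\ $\Omega_{-\eps/2}=\emptyset$) are harmless, since then $v\equiv0$ and $\mathcal O_\eps=\emptyset$ satisfies the conclusion vacuously.
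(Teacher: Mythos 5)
Your proposal is correct and follows essentially the same route as the paper: mollify $\chi_{\Omega_{-\eps/2}}$ at scale $\eps/4$, take a Sard-regular superlevel set, and deduce the sandwich $\Omega_{-3\eps/4}\subset\mathcal O_\eps\subset\Omega_{-\eps/4}$. The only difference is that you spell out more explicitly how the sandwich yields $\partial\mathcal O_\eps\subset N_\eps(\partial\Omega)$, which the paper leaves implicit.
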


\begin{coroll}\label{regular_approx_open_sets_coroll}
Let $\Omega\subset\R^n$ be an open set. Then there exists a sequence $\{\Omega_k\}$ of bounded open sets with smooth boundary such that $\Omega_k\nearrow\Omega$ strictly, i.e.
\begin{equation}
\Omega_k\subset\subset\Omega_{k+1}\subset\subset\Omega\qquad\textrm{and}\qquad\bigcup_{k\in\mathbb N}\Omega_k=\Omega.
\end{equation}
In particular $\Omega_k\xrightarrow{loc}\Omega$.

\begin{proof}

It is enough to notice that we can approximate $\Omega$ strictly from the inside with bounded open sets $\mathcal O_k\subset\R^n$, that is
\begin{equation*}
\mathcal O_k\subset\subset\mathcal O_{k+1}\subset\subset\Omega\qquad\textrm{and}\qquad\bigcup_{k\in\mathbb N}\mathcal O_k=\Omega.
\end{equation*}

Then we can exploit Proposition \ref{first_approx_prop}, and in particular $(\ref{eq_app_op})$, to find bounded open sets $\Omega_k\subset\R^n$ with smooth boundary such that
\begin{equation*}
\mathcal O_k\subset\subset\Omega_k\subset\subset\mathcal O_{k+1}.
\end{equation*}
Indeed we can take as $\Omega_k$ a set $\mathcal O_\eps$ corresponding to $\mathcal O_{k+1}$, with $\eps$
small enough to guarantee $\mathcal O_k\subset\subset\mathcal O_\eps$.\\
As for the sets $\mathcal O_k$, if $\Omega$ is bounded we can simply take
$
\mathcal O_k:=\Omega_{-2^{-k}}.
$
If $\Omega$ is not bounded, we can consider the sets $\Omega\cap B_{2^k}$ and define
\begin{equation*}
\mathcal O_k:=\big\{x\in\Omega\cap B_{2^k}\,|\,d\big(x,\partial(\Omega\cap B_{2^k})\big)>2^{-k}\big\}.
\end{equation*}
To conclude, notice that we have $\chi_{\Omega_k}\longrightarrow\chi_\Omega$ pointwise everywhere in $\R^n$, which implies the convergence in $L^1_{loc}(\R^n)$.
\end{proof}
\end{coroll}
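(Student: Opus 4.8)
The plan is first to build a ``rough'' exhaustion of $\Omega$ by bounded open sets with compact inclusions, and then to slip a smooth open set between two consecutive terms by means of Proposition \ref{first_approx_prop}. For the rough exhaustion, if $\Omega$ is bounded I would take $\mathcal O_k:=\Omega_{-2^{-k}}=\{x\in\R^n\,|\,d(x,\Co\Omega)>2^{-k}\}$: this is a strictly increasing family of bounded open sets with $\mathcal O_k\subset\subset\mathcal O_{k+1}\subset\subset\Omega$, and $\bigcup_k\mathcal O_k=\Omega$ since every $x\in\Omega$ satisfies $d(x,\Co\Omega)>0$. If $\Omega$ is unbounded I would instead truncate with balls and set
\[
\mathcal O_k:=\big\{x\in\Omega\cap B_{2^k}\,\big|\,d\big(x,\partial(\Omega\cap B_{2^k})\big)>2^{-k}\big\},
\]
which again gives bounded open sets with $\mathcal O_k\subset\subset\Omega\cap B_{2^k}\subset\subset\Omega$; here $\bigcup_k\mathcal O_k=\Omega$ because for fixed $x\in\Omega$ we have $d(x,\partial\Omega)>0$ and $2^k-|x|\to\infty$, so $x\in\mathcal O_k$ for all $k$ large, and one checks directly that $\mathcal O_k\subset\subset\mathcal O_{k+1}$.

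Next, for each $k$ I would apply Proposition \ref{first_approx_prop} to the bounded open set $\mathcal O_{k+1}$: by \eqref{eq_app_op} it produces, for every $\eps>0$, a bounded open set $\mathcal O_\eps$ with smooth boundary such that $(\mathcal O_{k+1})_{-3\eps/4}\subset\mathcal O_\eps\subset(\mathcal O_{k+1})_{-\eps/4}\subset\subset\mathcal O_{k+1}$. Since $\overline{\mathcal O_k}$ is a compact subset of the open set $\mathcal O_{k+1}$, for $\eps$ small enough $\mathcal O_k\subset(\mathcal O_{k+1})_{-3\eps/4}$, hence $\mathcal O_k\subset\subset\mathcal O_\eps$; I would then set $\Omega_k:=\mathcal O_\eps$ for such an $\eps$. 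This gives bounded open sets with smooth boundary satisfying
\[
\mathcal O_k\subset\subset\Omega_k\subset\subset\mathcal O_{k+1}\subset\subset\Omega_{k+1},
\]
so that $\Omega_k\subset\subset\Omega_{k+1}\subset\subset\Omega$ and $\Omega=\bigcup_k\mathcal O_k\subseteq\bigcup_k\Omega_k\subseteq\Omega$, i.e. $\Omega_k\nearrow\Omega$ strictly. Finally, $\chi_{\Omega_k}\to\chi_\Omega$ pointwise everywhere in $\R^n$ (because the $\Omega_k$ increase to $\Omega$), and dominated convergence on each bounded open set yields $\Omega_k\xrightarrow{loc}\Omega$.

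I expect the only slightly delicate point to be the construction of the rough exhaustion in the unbounded case: one has to check that the truncated-and-eroded sets $\mathcal O_k$ really are nested with compact inclusion and exhaust $\Omega$, using $\partial(\Omega\cap B_{2^k})\subset(\partial\Omega\cap\overline{B_{2^k}})\cup(\overline{\Omega}\cap\partial B_{2^k})$ and estimating $d(x,\partial\Omega)$ and $d(x,\partial B_{2^k})=2^k-|x|$ separately. Everything else is a direct application of Proposition \ref{first_approx_prop}.
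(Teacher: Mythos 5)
Your proof is correct and takes essentially the same route as the paper's: a rough exhaustion by bounded open sets (erosion $\Omega_{-2^{-k}}$ in the bounded case, truncation by $B_{2^k}$ plus erosion in the unbounded case), followed by interpolating a smooth set between consecutive terms via Proposition \ref{first_approx_prop} and \eqref{eq_app_op}. The nesting check you flag for the unbounded case does go through exactly as you sketch, so nothing is missing.
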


\end{subsubsection}

\begin{subsubsection}{Some uniform estimates for $\rho$-neighborhoods}

The uniform bound $(\ref{bound_perimeter_unif_eq})$ on the perimeters of the sets $\Omega_\delta$
allows us to obtain the following estimates, which will be used in the sequel

\begin{lem}
Let $\Omega\subset\R^n$ be a bounded open set with Lipschitz boundary. Let $\delta\in(0,r_0)$. Then
\begin{equation}\label{uniform_bound_strips}\begin{split}
&(i)\quad\Ll_s(\Omega_{-\delta},\Omega\setminus\Omega_{-\delta})\leq C\,\delta^{1-s},\\
&
(ii)\quad\Ll_s(\Omega,\Omega_\delta\setminus\Omega)\leq C\,\delta^{1-s}
\quad\textrm{and}
\quad\Ll_s(\Omega\setminus\Omega_{-\delta},\Co\Omega)\leq C\,\delta^{1-s},
\end{split}\end{equation}
where the constant $C$ is
\begin{equation*}
C:=\frac{n\omega_n}{s(1-s)}\,\sup_{|r|<r_0}\Ha^{n-1}(\{\bar{d}_\Omega=r\}).
\end{equation*}

\begin{proof}

By using the coarea formula for $\bar{d}_\Omega$ and exploiting $(\ref{bound_perimeter_unif_eq})$, we get
\begin{equation*}\begin{split}
\Ll_s(\Omega_{-\delta},\Omega\setminus\Omega_{-\delta})&
=\int_{-\delta}^0\Big(\int_{\{\bar{d}_\Omega=\rho\}}\Big(\int_{\Omega_{-\delta}}\frac{dx}{|x-y|^{n+s}}\Big)d\Ha^{n-1}(y)\Big)d\rho\\
&
\leq\int_{-\delta}^0\Big(\int_{\{\bar{d}_\Omega=\rho\}}\Big(\int_{\Co B_{\rho+\delta}(y)}\frac{dx}{|x-y|^{n+s}}\Big)d\Ha^{n-1}(y)\Big)d\rho\\
&
=\frac{n\omega_n}{s}\int_{-\delta}^0\frac{\Ha^{n-1}(\{\bar{d}_\Omega=\rho\})}{(\rho+\delta)^s}\,d\rho\\
&
\leq M \frac{n\omega_n}{s(1-s)}\int_{-\delta}^0\frac{d}{d\rho}(\rho+\delta)^{1-s}\,d\rho=M \frac{n\omega_n}{s(1-s)}\,\delta^{1-s}.
\end{split}\end{equation*}

In the same way we obtain point $(ii)$,
\begin{equation*}\begin{split}
\Ll_s(\Omega_\delta\setminus\Omega,\Omega)&
=\int^\delta_0\Big(\int_{\{\bar{d}_\Omega=\rho\}}\Big(\int_\Omega\frac{dx}{|x-y|^{n+s}}\Big)d\Ha^{n-1}(y)\Big)d\rho\\
&
\leq\int^\delta_0\Big(\int_{\{\bar{d}_\Omega=\rho\}}\Big(\int_{\Co B_\rho(y)}\frac{dx}{|x-y|^{n+s}}\Big)d\Ha^{n-1}(y)\Big)d\rho\\
&
=\frac{n\omega_n}{s}\int^\delta_0\frac{\Ha^{n-1}(\{\bar{d}_\Omega=\rho\})}{\rho^s}\,d\rho\\
&
\leq M \frac{n\omega_n}{s(1-s)}\int^\delta_0\frac{d}{d\rho}\rho^{1-s}\,d\rho=M \frac{n\omega_n}{s(1-s)}\,\delta^{1-s},
\end{split}\end{equation*}
(the other estimate is analogous).
\end{proof}
\end{lem}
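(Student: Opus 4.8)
The statement to be proven is the Lemma giving the estimates \eqref{uniform_bound_strips} for the $\rho$-neighborhoods of $\Omega$; the strategy is the one already visible in the displayed computation, which I now sketch in full. The fundamental point is the \emph{coarea formula for the signed distance function} $\bar d_\Omega$: since $\bar d_\Omega$ is Lipschitz with $|\nabla \bar d_\Omega| = 1$ a.e., for any nonnegative measurable $g$ one has $\int_{\{a<\bar d_\Omega<b\}} g(x)\,dx = \int_a^b\big(\int_{\{\bar d_\Omega=\rho\}} g\, d\Ha^{n-1}\big)d\rho$. Applying this with $a=-\delta$, $b=0$ and $g(x)=\int_{\Omega_{-\delta}}|x-y|^{-n-s}\,dy$ (viewing $y$ as the integration variable and $x$ as the point sliced by $\bar d_\Omega$) rewrites $\Ll_s(\Omega_{-\delta},\Omega\setminus\Omega_{-\delta})$ as an iterated integral $\int_{-\delta}^0\int_{\{\bar d_\Omega=\rho\}}\big(\int_{\Omega_{-\delta}}|x-y|^{-n-s}dx\big)d\Ha^{n-1}(y)\,d\rho$.

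The key geometric observation is a \textbf{distance separation}: if $y\in\{\bar d_\Omega=\rho\}$ with $-\delta<\rho<0$, then $y\in\Omega$ and $d(y,\partial\Omega)=|\rho|$, whereas every $x\in\Omega_{-\delta}$ satisfies $\bar d_\Omega(x)<-\delta$, so $d(x,\partial\Omega)>\delta$; by the triangle inequality any point at distance $\le \rho+\delta$ from $y$ has $\bar d_\Omega > -\delta$ hence lies outside $\Omega_{-\delta}$. Therefore $\Omega_{-\delta}\subset \Co B_{\rho+\delta}(y)$, and we may enlarge the inner integral to $\int_{\Co B_{\rho+\delta}(y)}|x-y|^{-n-s}\,dx$. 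This last integral is computed in polar coordinates: $\int_{\Co B_r(0)}|z|^{-n-s}\,dz = n\omega_n\int_r^\infty t^{-1-s}\,dt = \frac{n\omega_n}{s}\,r^{-s}$. Substituting $r=\rho+\delta$ yields the bound $\frac{n\omega_n}{s}\int_{-\delta}^0 (\rho+\delta)^{-s}\,\Ha^{n-1}(\{\bar d_\Omega=\rho\})\,d\rho$. Now invoke Proposition \ref{bound_perimeter_unif}: for $\delta<r_0$ each $\Ha^{n-1}(\{\bar d_\Omega=\rho\})\le M:=\sup_{|r|<r_0}\Ha^{n-1}(\{\bar d_\Omega=r\})<\infty$, so the integral is dominated by $M\frac{n\omega_n}{s}\int_{-\delta}^0(\rho+\delta)^{-s}d\rho = M\frac{n\omega_n}{s}\cdot\frac{\delta^{1-s}}{1-s}$, which is exactly $C\delta^{1-s}$ with $C=\frac{n\omega_n}{s(1-s)}M$. (In the displayed proof this last step is written by recognizing $(\rho+\delta)^{-s}=\frac{1}{1-s}\frac{d}{d\rho}(\rho+\delta)^{1-s}$ and integrating; either phrasing works.)

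For part $(ii)$ the argument is identical in structure. For $\Ll_s(\Omega,\Omega_\delta\setminus\Omega)$ one slices with $\bar d_\Omega$ over $\rho\in(0,\delta)$: a point $y$ with $\bar d_\Omega(y)=\rho>0$ lies in $\Co\Omega$ at distance $\rho$ from $\partial\Omega$, while every $x\in\Omega$ has $d(x,\partial\Omega)$ arbitrary but in any case $d(x,y)\ge \rho$ (since the segment from $x\in\Omega$ to $y\notin\overline\Omega$ crosses $\partial\Omega$), so $\Omega\subset \Co B_\rho(y)$ and the same polar-coordinate computation gives $\frac{n\omega_n}{s}\rho^{-s}$; integrating $\rho^{-s}$ over $(0,\delta)$ again produces $\frac{\delta^{1-s}}{1-s}$ and hence $C\delta^{1-s}$. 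The estimate $\Ll_s(\Omega\setminus\Omega_{-\delta},\Co\Omega)\le C\delta^{1-s}$ is proven the same way by symmetry, slicing over $\rho\in(-\delta,0)$ with the roles of ``inside'' and ``outside'' swapped: for $y\in\Co\Omega$ and $x\in\Omega\setminus\Omega_{-\delta}=\{-\delta<\bar d_\Omega<0\}$ wait---rather, one slices the $\Omega\setminus\Omega_{-\delta}$ factor, and for $x$ in that strip with $\bar d_\Omega(x)=\rho\in(-\delta,0)$ one has $d(x,\partial\Omega)=|\rho|$ so $\Co\Omega\subset\Co B_{|\rho|}(x)$, integrate, done.

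\emph{Main obstacle.} There is no deep obstacle; the only points requiring care are: (a) correctly identifying \emph{which} of the two factors of $\Ll_s$ to slice with $\bar d_\Omega$ so that the strip of integration for $\rho$ is bounded (namely the thin strip, not $\Omega$ itself, and one integrates out the \emph{other} variable first over the inflated complement of a ball), and (b) checking the inclusion into the complement of the ball, which rests on the elementary but essential fact that $d(x,y)\ge |\bar d_\Omega(x)-\bar d_\Omega(y)|$ when $x,y$ lie on opposite sides of $\partial\Omega$, equivalently that any path from a point of $\Omega_{-\delta}$ to a point $y$ with $\bar d_\Omega(y)=\rho$ must travel a distance at least $\rho+\delta$. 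Everything else is the polar-coordinate evaluation $\int_{\Co B_r}|z|^{-n-s}dz=\frac{n\omega_n}{s}r^{-s}$ and the uniform perimeter bound from Proposition \ref{bound_perimeter_unif}, both of which are available.
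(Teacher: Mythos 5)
Your proof is correct and follows the paper's argument essentially verbatim: slice the thin strip with the coarea formula for $\bar d_\Omega$, use the $1$-Lipschitz property of the signed distance to enclose the other factor in the complement of a ball, evaluate $\int_{\Co B_r}|z|^{-n-s}dz=\tfrac{n\omega_n}{s}r^{-s}$ in polar coordinates, and integrate in $\rho$ using the uniform bound $M$ from Proposition \ref{bound_perimeter_unif}. The only blemish is a harmless relabeling slip in your setup of the coarea formula (you define $g$ with $x$ as the sliced point but write the iterated integral with $y$ sliced); since the kernel is symmetric in $x$ and $y$, this does not affect the argument.
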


\end{subsubsection}

\end{subsection}



\begin{subsection}{(Semi)continuity of the $s$-perimeter}

As shown in Theorem 3.1 of \cite{CRS}, Fatou's Lemma gives the lower semicontinuity of the functional $\Ll_s$.
\begin{prop}\label{semicont_first_prop}
Suppose
\begin{equation*}
A_k\xrightarrow{loc}A\qquad\textrm{and}\qquad B_k\xrightarrow{loc}B.
\end{equation*}
Then
\begin{equation}\label{semicontinuity_first}
\Ll_s(A,B)\leq\liminf_{k\to\infty}\Ll_s(A_k,B_k).
\end{equation}
In particular, if
\begin{equation*}
E_k\xrightarrow{loc}E\qquad\textrm{and}\qquad \Omega_k\xrightarrow{loc}\Omega,
\end{equation*}
then
\begin{equation}
P_s(E,\Omega)\leq\liminf_{k\to\infty}P_s(E_k,\Omega_k).
\end{equation}

\begin{proof}
If the right hand side of $(\ref{semicontinuity_first})$ is infinite, we have nothing to prove, so we can suppose that it is finite.
By definition of the liminf, we can find $k_i\nearrow\infty$ such that
\begin{equation*}
\lim_{i\to\infty}\Ll_s(A_{k_i},B_{k_i})=\liminf_{k\to\infty}\Ll_s(A_k,B_k)=:I.
\end{equation*}
Since $\chi_{A_{k_i}}\to\chi_A$ and $\chi_{B_{k_i}}\to\chi_B$ in
$L^1_{loc}(\R^n)$, up to passing to a subsequence we can suppose that
\begin{equation*}
\chi_{A_{k_i}}\longrightarrow\chi_A\qquad\textrm{and}\qquad\chi_{B_{k_i}}\longrightarrow\chi_B\qquad\textrm{a.e. in }\R^n.
\end{equation*}
Then, since
\begin{equation*}
\Ll_s(A_{k_i},B_{k_i})=\int_{\R^n}\int_{\R^n}\frac{1}{|x-y|^{n+s}}\chi_{A_{k_i}}(x)\chi_{B_{k_i}}(y)\,dx\,dy,
\end{equation*}
Fatou's Lemma gives
\begin{equation*}
\Ll_s(A,B)\leq\liminf_{i\to\infty}\Ll_s(A_{k_i},B_{k_i})=I,
\end{equation*}
proving $(\ref{semicontinuity_first})$.

The second inequality follows just by summing the contributions defining the fractional perimeter.
\end{proof}
\end{prop}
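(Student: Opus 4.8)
The plan is to reduce everything to a single application of Fatou's Lemma for the nonnegative kernel $|x-y|^{-n-s}$. First I would dispose of the trivial case: if $\liminf_{k\to\infty}\Ll_s(A_k,B_k)=+\infty$ there is nothing to prove, so assume it is finite and, by definition of the $\liminf$, pick indices $k_i\nearrow\infty$ with $\Ll_s(A_{k_i},B_{k_i})\to I:=\liminf_{k\to\infty}\Ll_s(A_k,B_k)$. Since $\chi_{A_{k_i}}\to\chi_A$ and $\chi_{B_{k_i}}\to\chi_B$ in $L^1_{loc}(\R^n)$, a diagonal extraction over an exhausting sequence of balls produces a further subsequence (not relabelled) along which both $\chi_{A_{k_i}}\to\chi_A$ and $\chi_{B_{k_i}}\to\chi_B$ hold a.e. in $\R^n$; passing to this subsequence does not affect the value $I$.

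Next I would write
\[
\Ll_s(A_{k_i},B_{k_i})=\int_{\R^n}\int_{\R^n}\frac{\chi_{A_{k_i}}(x)\,\chi_{B_{k_i}}(y)}{|x-y|^{n+s}}\,dx\,dy,
\]
observe that the integrand is nonnegative and measurable, and that $\chi_{A_{k_i}}(x)\chi_{B_{k_i}}(y)\to\chi_A(x)\chi_B(y)$ for $(x,y)$ outside an $\mathcal L^{2n}$-null subset of $\R^{2n}$ (the product of the exceptional set in the $x$ variable with $\R^n$, together with $\R^n$ times the exceptional set in the $y$ variable). Fatou's Lemma applied on $\R^{2n}$ with the nonnegative measure $|x-y|^{-n-s}\,dx\,dy$ then gives $\Ll_s(A,B)\le\liminf_{i\to\infty}\Ll_s(A_{k_i},B_{k_i})=I$, which is $(\ref{semicontinuity_first})$.

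Finally, for the $s$-perimeter statement I would note that each of the sets appearing as an argument of $\Ll_s$ in the definition of $P_s(E_k,\Omega_k)$ — that is $E_k\cap\Omega_k$, $\Co E_k\cap\Omega_k$, $E_k\setminus\Omega_k$, $\Co E_k\setminus\Omega_k$ — converges in $L^1_{loc}(\R^n)$ to the corresponding set built from $E$ and $\Omega$, since intersection, union and complementation are continuous for $L^1_{loc}$ convergence of characteristic functions (using $\chi_{\Co A}=1-\chi_A$ and the elementary inclusion $(A_k\cap B_k)\Delta(A\cap B)\subseteq(A_k\Delta A)\cup(B_k\Delta B)$ intersected with any fixed ball). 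Applying $(\ref{semicontinuity_first})$ to each of the three $\Ll_s$-terms and using the superadditivity of the $\liminf$, namely $\liminf(a_k+b_k+c_k)\ge\liminf a_k+\liminf b_k+\liminf c_k$, yields $P_s(E,\Omega)\le\liminf_{k\to\infty}P_s(E_k,\Omega_k)$.

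The only points demanding a little care are the passage from a.e.\ convergence of each factor on $\R^n$ to a.e.\ convergence of the product on $\R^{2n}$, and the fact that Fatou's Lemma is applied with respect to the measure $|x-y|^{-n-s}\,dx\,dy$, which is only $\sigma$-finite (indeed locally infinite near the diagonal); neither causes any real difficulty, so I do not expect a genuine obstacle in this argument.
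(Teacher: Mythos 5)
Your proposal is correct and follows essentially the same route as the paper: extract a subsequence realizing the liminf, pass to a further subsequence with a.e.\ convergence of the characteristic functions, and apply Fatou's Lemma to the nonnegative integrand on $\R^{2n}$, then sum the three $\Ll_s$-contributions using superadditivity of the liminf for the perimeter statement. The extra care you take (diagonal extraction, null sets in $\R^{2n}$, continuity of set operations under $L^1_{loc}$ convergence) only fills in details the paper leaves implicit.
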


\noindent
Keeping $\Omega$ fixed we obtain Theorem 3.1 of \cite{CRS}.

On the other hand, if we keep the set $E$ fixed and approximate the open set $\Omega$ with a
sequence of open subsets $\Omega_k\subset\Omega$, we get a continuity property.

\begin{prop}\label{continuity_in_open_set_seq}
Let $\Omega\subset\R^n$ be an open set and let $\{\Omega_k\}$ be any sequence of open sets such that
$\Omega_k\xrightarrow{loc}\Omega$.
Then for every set $E\subset\R^n$
\begin{equation*}
P_s(E,\Omega)\leq\liminf_{k\to\infty}P_s(E,\Omega_k).
\end{equation*}
Moreover, if $\Omega_k\subset\Omega$ for every $k$, then
\begin{equation}\label{limit_sub_open}
P_s(E,\Omega)=\lim_{k\to\infty}P_s(E,\Omega_k),
\end{equation}
(finite or not).
\begin{proof}
Since $\Omega_k\xrightarrow{loc}\Omega$,
Proposition \ref{semicont_first_prop} gives the first statement.
Now notice that if $\Omega_k\subset\Omega$, Proposition $\ref{subopensets}$ implies
\begin{equation*}
P_s(E,\Omega_k)\leq P_s(E,\Omega),
\end{equation*}
and hence
\begin{equation*}
\limsup_{k\to\infty}P_s(E,\Omega_k)\leq P_s(E,\Omega),
\end{equation*}
concluding the proof.
\end{proof}
\end{prop}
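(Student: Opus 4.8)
\emph{Proof strategy.} The plan is to read off the first inequality directly from the semicontinuity already established, and then to upgrade it to an equality under the monotonicity hypothesis by using the monotonicity of $P_s(E,\cdot)$ in the domain.

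For the lower-bound inequality, I would apply Proposition~\ref{semicont_first_prop} with the constant sequence $E_k:=E$ (which trivially satisfies $E_k\xrightarrow{loc}E$) together with the given sequence $\Omega_k\xrightarrow{loc}\Omega$. The ``in particular'' part of that proposition yields at once
\[
P_s(E,\Omega)\le\liminf_{k\to\infty}P_s(E,\Omega_k),
\]
which is precisely the first assertion; this uses nothing about $E$ and holds whether or not the right-hand side is finite.

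Next, assume in addition $\Omega_k\subset\Omega$ for every $k$. Here I would invoke Proposition~\ref{subopensets} with the pair $\Omega'=\Omega_k\subset\Omega$: the identity there expresses $P_s(E,\Omega)$ as $P_s(E,\Omega_k)$ plus two terms of the form $\Ll_s(\cdot,\cdot)$, each nonnegative since it is the integral of a positive kernel over a pair of disjoint sets. Equivalently one may simply cite Remark~\ref{bded_set_frac_per}. In either case $P_s(E,\Omega_k)\le P_s(E,\Omega)$ for every $k$, so
\[
\limsup_{k\to\infty}P_s(E,\Omega_k)\le P_s(E,\Omega),
\]
and combining this with the first part gives the chain
\[
P_s(E,\Omega)\le\liminf_{k\to\infty}P_s(E,\Omega_k)\le\limsup_{k\to\infty}P_s(E,\Omega_k)\le P_s(E,\Omega),
\]
so all four quantities coincide and in particular $\lim_{k\to\infty}P_s(E,\Omega_k)=P_s(E,\Omega)$.

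There is no real obstacle here; the only point requiring a little care is the ``finite or not'' clause. It is handled by noting that the sandwich of inequalities above is valid in $[0,+\infty]$ and degenerates correctly when the common value is $+\infty$: indeed, if $P_s(E,\Omega)=+\infty$ the first inequality already forces $\liminf_k P_s(E,\Omega_k)=+\infty$, hence $\lim_k P_s(E,\Omega_k)=+\infty$ as well.
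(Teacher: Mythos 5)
Your proof is correct and follows essentially the same route as the paper: the first inequality from Proposition~\ref{semicont_first_prop} applied with the constant sequence $E_k=E$, and the reverse bound from the monotonicity $P_s(E,\Omega_k)\leq P_s(E,\Omega)$ furnished by Proposition~\ref{subopensets}. The extra remark on the ``finite or not'' clause is a harmless elaboration of what the sandwich argument already delivers.
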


\begin{rmk}\label{continuity_in_open_set_seq_rmk}
As a consequence, exploiting Corollary $\ref{regular_approx_open_sets_coroll}$, we get
\begin{equation}
P_s(E,\Omega)=\sup_{\Omega'\subsetneq\Omega}P_s(E,\Omega')
=\sup_{\Omega'\subset\subset\Omega}P_s(E,\Omega').
\end{equation}
\end{rmk}

\begin{rmk}\label{counterex_loc_fin_per}
Consider the set $E\subset\R$ constructed in the proof of Example 2.10 in \cite{asymptzero}.
That is, let $\beta_k>0$ be a decreasing sequence such that
\[M:=\sum_{k=1}^\infty\beta_k<\infty\quad\textrm{and}
\quad\sum_{k=1}^\infty\beta_{2k}^{1-s}=\infty,\quad\forall\,s\in(0,1).\]
Then define
\[\sigma_m:=\sum_{k=1}^m\beta_k,\qquad I_m:=(\sigma_m,\sigma_{m+1}),\qquad E:=\bigcup_{j=1}^\infty I_{2j},\]
and let $\Omega:=(0,M)$. As shown in \cite{asymptzero},
\[P_s(E,\Omega)=\infty,\qquad\forall\,s\in(0,1).\]
On the other hand
\[P(E,\Omega')<\infty,\qquad\forall\,\Omega'\subset\subset\Omega,\]
hence $E$ has locally finite $s$-perimeter in $\Omega$, for every $s\in(0,1)$.

Indeed, notice that the intervals $I_{2j}$ accumulate near $M$. Thus, for every $\eps>0$,
all but a finite number of the intervals $I_{2j}$'s fall outside of the open set
$\mathcal O_\eps:=(\eps,M-\eps)$. Therefore
$P(E,\mathcal O_\eps)<\infty$ and hence
\[P_s(E,\mathcal O_\eps)<\infty,\quad\forall\,s\in(0,1).\]
Since $\mathcal O_\eps\nearrow\Omega$ as $\eps\to0^+$, the set $E$ has locally finite $s$-perimeter in $\Omega$
for every $s\in(0,1)$.
\end{rmk}

\begin{prop}\label{subcont_lem_approx}
Let $\Omega\subset\R^n$ be an open set and let $\{E_h\}$ be a sequence of sets such that
\begin{equation*}
E_h\xrightarrow{loc}E\qquad\textrm{and}\qquad \lim_{h\to\infty}P_s(E_h,\Omega)=P_s(E,\Omega)<\infty.
\end{equation*}
Then
\begin{equation}
\lim_{h\to\infty}P_s(E_h,\Omega')=P_s(E,\Omega')\qquad\textrm{for every open set }\Omega'\subset\Omega.
\end{equation}

\begin{proof}
The claim follows from classical properties of limits of sequences.

Indeed, let
\begin{equation*}
\begin{split}
&\qquad\qquad\qquad\qquad\qquad a_h:=P_s(E_h,\Omega'),\\
&
b_h:=\Ll_s\big(E_h\cap(\Omega\setminus\Omega'),\Co E_h\setminus\Omega'\big)
+\Ll_s\big(E_h\setminus\Omega',\Co E_h\cap(\Omega\setminus\Omega')\big),
\end{split}\end{equation*}
and let $a$ and $b$ be the corresponding terms for $E$.\\
Notice that, by Proposition $\ref{subopensets}$, we have
\begin{equation*}
P_s(E_h,\Omega)=a_h+b_h\qquad\textrm{and}\qquad P_s(E,\Omega)=a+b.
\end{equation*}
From Proposition $\ref{semicont_first_prop}$ we have
\begin{equation*}
a\leq\liminf_{h\to\infty}a_h\qquad\textrm{and}\qquad b\leq\liminf_{h\to\infty}b_h,
\end{equation*}
and by hypothesis we know that
\begin{equation*}
\lim_{h\to\infty}(a_h+b_h)=a+b.
\end{equation*}
Therefore
\begin{equation*}
a+b\leq\liminf_{h\to\infty}a_h+\liminf_{h\to\infty}b_h\leq\liminf_{h\to\infty}(a_h+b_h)=a+b,
\end{equation*}
and hence
\begin{equation*}
0\leq\liminf_{h\to\infty}b_h-b=a-\liminf_{h\to\infty}a_h\leq0,
\end{equation*}
so that
\begin{equation*}
a=\liminf_{h\to\infty}a_h\qquad\textrm{and}\qquad b=\liminf_{h\to\infty}b_h.
\end{equation*}
Then, since
\begin{equation*}
\limsup_{h\to\infty}a_h+\liminf_{h\to\infty}b_h\leq\limsup_{h\to\infty}(a_h+b_h)=a+b,
\end{equation*}
we obtain
\begin{equation*}
a=\liminf_{h\to\infty}a_h\leq\limsup_{h\to\infty}a_h\leq a,
\end{equation*}
concluding the proof.
\end{proof}
\end{prop}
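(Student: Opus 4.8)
The plan is to split $P_s(\cdot,\Omega)$ into the contribution located in $\Omega'$ and a remainder term supported away from $\Omega'$, and then play the lower semicontinuity of each of the two pieces against the assumed convergence of their sum. The only tool needed besides elementary properties of $\liminf$ and $\limsup$ is the additivity formula of Proposition \ref{subopensets} together with the lower semicontinuity of $\Ll_s$ from Proposition \ref{semicont_first_prop}.

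Concretely, fix an open set $\Omega'\subset\Omega$ and, using Proposition \ref{subopensets} applied to the pair $\Omega'\subset\Omega$, write $P_s(E_h,\Omega)=a_h+b_h$ with
\[
a_h:=P_s(E_h,\Omega'),\qquad b_h:=\Ll_s\big(E_h\cap(\Omega\setminus\Omega'),\Co E_h\setminus\Omega'\big)+\Ll_s\big(E_h\setminus\Omega',\Co E_h\cap(\Omega\setminus\Omega')\big),
\]
and similarly $P_s(E,\Omega)=a+b$. Since $E_h\xrightarrow{loc}E$ forces $E_h\cap A\xrightarrow{loc}E\cap A$ and $\Co E_h\cap A\xrightarrow{loc}\Co E\cap A$ for every fixed measurable set $A$, Proposition \ref{semicont_first_prop} applied to each $\Ll_s$ summand (and to the three summands defining $P_s(\cdot,\Omega')$) gives $a\le\liminf_h a_h$ and $b\le\liminf_h b_h$. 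All these quantities are finite: $0\le a,b\le P_s(E,\Omega)<\infty$, while $a_h,b_h\le P_s(E_h,\Omega)$, and the latter converges, hence is bounded. So no $\infty-\infty$ cancellation can occur.

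Now combine with the hypothesis $a_h+b_h\to a+b$. From $a+b\le\liminf a_h+\liminf b_h\le\liminf(a_h+b_h)=a+b$ every inequality must be an equality; in particular $\liminf a_h+\liminf b_h=a+b$, which together with $a\le\liminf a_h$, $b\le\liminf b_h$ and finiteness forces $\liminf a_h=a$ and $\liminf b_h=b$. Finally $\limsup a_h+\liminf b_h\le\limsup(a_h+b_h)=a+b=a+\liminf b_h$ yields $\limsup a_h\le a=\liminf a_h$, so $a_h\to a$, i.e. $P_s(E_h,\Omega')\to P_s(E,\Omega')$, as desired. The argument contains no real analytic difficulty; the one point to be careful with is precisely the finiteness bookkeeping that legitimizes splitting $\liminf(a_h+b_h)$ and passing between $\liminf$ and $\limsup$.
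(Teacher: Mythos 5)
Your proposal is correct and follows essentially the same route as the paper: the same decomposition $P_s(\cdot,\Omega)=a_h+b_h$ via Proposition \ref{subopensets}, the same use of lower semicontinuity from Proposition \ref{semicont_first_prop} on each piece, and the same $\liminf$/$\limsup$ bookkeeping to conclude $a_h\to a$. The extra remark on finiteness is a welcome, if minor, clarification of a point the paper leaves implicit.
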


\end{subsection}

\begin{subsection}{Compactness}


\begin{prop}[Compactness]\label{compact_prop}
Let $\Omega\subset\R^n$ be an open set.
If $\{E_h\}$ is a sequence of sets such that
\begin{equation}\label{compact_seq_diag_eq}
\limsup_{h\to\infty}P_s^L(E_h,\Omega')\leq c(\Omega')<\infty,\quad\forall\,\Omega'\subset\subset\Omega,
\end{equation}
then there exists a subsequence $\{E_{h_i}\}$ and $E\subset\R^n$ such that
\begin{equation*}
E_{h_i}\cap\Omega\xrightarrow{loc}E\cap\Omega.
\end{equation*}
\begin{proof}
We want to use a compact Sobolev embedding (Corollary 7.2 of \cite{HitGuide}) to construct a limit set via a diagonal argument.

Thanks to Corollary $\ref{regular_approx_open_sets_coroll}$ we know that we can find an increasing sequence of bounded open sets $\{\Omega_k\}$ with smooth boundary such that
\begin{equation*}
\Omega_k\subset\subset\Omega_{k+1}\subset\subset\Omega\qquad\textrm{and}\qquad\bigcup_{k\in\mathbb N}\Omega_k=\Omega.
\end{equation*}
Moreover, hypothesis $(\ref{compact_seq_diag_eq})$ guarantees that
\begin{equation}\label{compact_seq_diag_eq2}
\forall k\quad\exists h(k)\textrm{ s.t.}\quad
P_s^L(E_h,\Omega_k)\leq c_k<\infty,\quad\forall h\geq h(k).
\end{equation}
Clearly
\begin{equation*}
\|\chi_{E_h}\|_{L^1(\Omega_k)}\leq|\Omega_k|<\infty,
\end{equation*}
and hence, since $[\chi_{E_h}]_{W^{s,1}(\Omega_k)}=2P_s^L(E_h,\Omega_k)$,  we have
\begin{equation*}
\|\chi_{E_h}\|_{W^{s,1}(\Omega_k)}\leq c'_k,\quad\forall h\geq h(k).
\end{equation*}
Therefore Corollary 7.2 of \cite{HitGuide} (notice that each $\Omega_k$ is an extension domain) guarantees for every fixed $k$ the existence of a subsequence $h_i\nearrow\infty$ (with $h_1\geq
h(k)$) such that
\begin{equation*}
E_{h_i}\cap\Omega_k\xrightarrow{i\to\infty} E^k
\end{equation*}
in measure, for some set $E^k\subset\Omega_k$.

Applying this argument for $k=1$ we get a subsequence $\{h_i^1\}$ with
\begin{equation*}
E_{h_i^1}\cap\Omega_1\xrightarrow{i\to\infty}E^1.
\end{equation*}
Applying again this argument in $\Omega_2$, with $\{E_{h_i^1}\}$ in place of $\{E_h\}$, we get a subsequence
$\{h_i^2\}$ of $\{h_i^1\}$ with
\begin{equation*}
E_{h_i^2}\cap\Omega_2\xrightarrow{i\to\infty}E^2.
\end{equation*}
Notice that, since $\Omega_1\subset\Omega_2$, we must have $E^1\subset E^2$ in measure (by the uniqueness of the limit in $\Omega_1$). We can also suppose that $h_1^2>h_1^1$.\\
Proceeding inductively in this way we get an increasing subsequence $\{h_1^k\}$ such that
\begin{equation*}
E_{h_1^i}\cap\Omega_k\xrightarrow{i\to\infty}E^k,\qquad\textrm{for every }k\in\mathbb{N},
\end{equation*}
with $E^k\subset E^{k+1}$. Therefore if we define $E:=\bigcup_kE^k$, since $\bigcup_k\Omega_k=\Omega$, we get
\begin{equation*}
E_{h_1^i}\cap\Omega\xrightarrow{loc}E,
\end{equation*}
concluding the proof.
\end{proof}
\end{prop}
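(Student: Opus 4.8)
The plan is to reduce the statement to a compact fractional Sobolev embedding on bounded pieces of $\Omega$ and then to reach the full set $\Omega$ by a diagonal extraction. The crucial observation is that hypothesis $(\ref{compact_seq_diag_eq})$ is precisely a uniform (in the $\limsup$ sense) bound on the Gagliardo seminorm of $\chi_{E_h}$: since $[\chi_{E_h}]_{W^{s,1}(\Omega')}=2\,P_s^L(E_h,\Omega')$, and since the $L^1$ norm is trivially controlled, $\|\chi_{E_h}\|_{L^1(\Omega')}\le|\Omega'|$ whenever $\Omega'\subset\subset\Omega$ is bounded, we obtain eventual uniform bounds on $\|\chi_{E_h}\|_{W^{s,1}(\Omega')}$.

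First I would fix, via Corollary \ref{regular_approx_open_sets_coroll}, an exhaustion $\Omega_k\subset\subset\Omega_{k+1}\subset\subset\Omega$ by bounded open sets with smooth boundary with $\bigcup_k\Omega_k=\Omega$. For each fixed $k$, $(\ref{compact_seq_diag_eq})$ gives an index $h(k)$ and a constant $c_k$ with $P_s^L(E_h,\Omega_k)\le c_k$ for all $h\ge h(k)$, hence $\|\chi_{E_h}\|_{W^{s,1}(\Omega_k)}\le c_k'$ for $h\ge h(k)$. Since $\partial\Omega_k$ is smooth, $\Omega_k$ is an extension domain, so the compact embedding of $W^{s,1}(\Omega_k)$ into $L^1(\Omega_k)$ (Corollary 7.2 of \cite{HitGuide}) yields a subsequence along which $\chi_{E_h}$ converges in $L^1(\Omega_k)$, that is, $E_h\cap\Omega_k$ converges in measure to some $E^k\subset\Omega_k$.

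Then I would run the standard diagonal argument, extracting nested subsequences $\{h_i^{k}\}$, one for each $k$, with $\{h_i^{k+1}\}$ a subsequence of $\{h_i^{k}\}$ and $E_{h_i^{k}}\cap\Omega_k\to E^k$ in measure. By uniqueness of the $L^1$ limit on $\Omega_j$ for $j\le k$, these limits are consistent, $E^j=E^k\cap\Omega_j$ up to measure zero, and in particular $E^k\subset E^{k+1}$. Putting $E:=\bigcup_k E^k\subset\Omega$ and taking the diagonal sequence $h_i:=h_i^{i}$, one has $E\cap\Omega_k=E^k$, while for any bounded open $U$ the measure $|(U\cap\Omega)\setminus\Omega_k|$ tends to $0$ as $k\to\infty$; splitting the symmetric difference into its part inside $\Omega_k$ and its part outside then gives $E_{h_i}\cap\Omega\xrightarrow{loc}E=E\cap\Omega$.

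I do not anticipate a real obstacle; the two points requiring care are the applicability of the compact embedding of \cite{HitGuide} — which needs each $\Omega_k$ to be an extension domain, and this is ensured by the smoothness of $\partial\Omega_k$ from Corollary \ref{regular_approx_open_sets_coroll} rather than by any assumption on $\partial\Omega$ itself — and the bookkeeping of the diagonal extraction together with the consistency of the limit sets, which is forced by uniqueness of the $L^1_{loc}$ limit. It is also worth noting that $(\ref{compact_seq_diag_eq})$ is only a $\limsup$ estimate, so the uniform $W^{s,1}$ bound holds merely for large $h$; this is harmless since subsequences are being extracted anyway.
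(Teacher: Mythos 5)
Your proposal is correct and follows essentially the same route as the paper: the uniform $W^{s,1}$ bound from $(\ref{compact_seq_diag_eq})$, the compact embedding of Corollary 7.2 of \cite{HitGuide} on the smooth exhaustion $\{\Omega_k\}$ from Corollary \ref{regular_approx_open_sets_coroll}, and a diagonal extraction with consistent limit sets $E^k$ whose union gives $E$. The only cosmetic difference is that you take the diagonal indices $h_i^i$ whereas the paper uses the increasing sequence of first elements $h_1^k$; both work identically.
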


\begin{rmk}\label{min_app_seq_rmk}
If $E_h$ is $s$-minimal in $\Omega_k$ for every $h\geq h(k)$, then by minimality we get
\begin{equation*}
P_s^L(E_h,\Omega_k)\leq P_s(E_h,\Omega_k)\leq P_s(E_h\setminus\Omega_k,\Omega_k)\leq P_s(\Omega_k)=:c_k<\infty,
\end{equation*}
since $\Omega_k$ is bounded and has Lipschitz boundary. Therefore $\{E_h\}$ satisfies the hypothesis of Proposition \ref{compact_prop} and we can find a convergent subsequence.
\end{rmk}

\end{subsection}

\end{section}

\begin{section}{Generalized coarea and approximation by smooth sets}

We begin by showing that the $s$-perimeter satisfies a generalized coarea formula (see also \cite{Visintin} and Lemma 10 in \cite{Gamma}).
In the end of this section we will exploit this formula to prove that a set $E$ of locally finite $s$-perimeter can be approximated by smooth sets whose
$s$-perimeter converges to that of $E$.\\

Let $\Omega\subset\R^n$ be an open set. Given a function $u:\R^n\longrightarrow\R$,
we define the functional
\begin{equation}\label{def_ext_func_coarea}
\Fc(u,\Omega):=\frac{1}{2}\int_\Omega\int_\Omega\frac{|u(x)-u(y)|}{|x-y|^{n+s}}dx\,dy
+\int_\Omega\int_{\Co\Omega}\frac{|u(x)-u(y)|}{|x-y|^{n+s}}dx\,dy,
\end{equation}
that is, half the ``$\Omega$-contribution'' to the $W^{s,1}$-seminorm of $u$.\\
Notice that
\begin{equation*}
\Fc(\chi_E,\Omega)=P_s(E,\Omega)
\end{equation*}
and, clearly
\begin{equation*}
\Fc(u,\R^n)=\frac{1}{2}[u]_{W^{s,1}(\R^n)}.
\end{equation*}

\begin{prop}[Coarea]\label{coarea_prop}
Let $\Omega\subset\R^n$ be an open set and let $u:\R^n\longrightarrow\R$.
Then
\begin{equation}\label{coarea_formula}
\Fc(u,\Omega)=\int_{-\infty}^\infty P_s(\{u>t\},\Omega)\,dt.
\end{equation}
In particular
\begin{equation*}
\frac{1}{2}[u]_{W^{s,1}(\Omega)}=\int_{-\infty}^\infty P_s^L(\{u>t\},\Omega)\,dt.
\end{equation*}

\begin{proof}
Notice that for every $x,\, y\in\R^n$ we have
\begin{equation}\label{coarea_first_formula}
|u(x)-u(y)|=\int_{-\infty}^\infty|\chi_{\{u>t\}}(x)-\chi_{\{u>t\}}(y)|\,dt.
\end{equation}
Indeed, the function $t\longmapsto|\chi_{\{u>t\}}(x)-\chi_{\{u>t\}}(y)|$ takes only the values $\{0,1\}$
and it is different from 0 precisely in the interval having $u(x)$ and $u(y)$ as extremes.
Therefore, if we plug $(\ref{coarea_first_formula})$ into $(\ref{def_ext_func_coarea})$ and use Fubini's Theorem, we get
\begin{equation*}
\Fc(u,\Omega)=\int_{-\infty}^\infty\Fc(\chi_{\{u>t\}},\Omega)\,dt=\int_{-\infty}^\infty P_s(\{u>t\},\Omega)\,dt,
\end{equation*}
as wanted.
\end{proof}
\end{prop}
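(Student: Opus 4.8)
The plan is to reduce the identity for $\Fc$ to the pointwise "layer-cake" identity
$
|u(x)-u(y)|=\int_{-\infty}^\infty|\chi_{\{u>t\}}(x)-\chi_{\{u>t\}}(y)|\,dt,
$
and then integrate against the kernel $|x-y|^{-n-s}$ over the relevant product domain, exchanging the order of integration by Tonelli's theorem (everything is nonnegative, so no integrability hypothesis is needed).

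First I would verify the pointwise identity \eqref{coarea_first_formula}. Fix $x,y\in\R^n$ and set $a=u(x)$, $b=u(y)$; without loss of generality $a\le b$. For each $t$, the number $\chi_{\{u>t\}}(x)$ equals $1$ iff $a>t$ and $\chi_{\{u>t\}}(y)$ equals $1$ iff $b>t$, so the integrand $|\chi_{\{u>t\}}(x)-\chi_{\{u>t\}}(y)|$ is the indicator of the set $\{t : a\le t < b\}$ (it is $1$ exactly when exactly one of $a>t$, $b>t$ holds, i.e. when $t<b$ but $t\ge a$). Integrating in $t$ gives $b-a=|u(x)-u(y)|$, which is \eqref{coarea_first_formula}. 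Note this also works when $a=b$ (both sides are $0$) and the half-open convention is irrelevant to the value of the Lebesgue integral.

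Next I would substitute \eqref{coarea_first_formula} into the definition \eqref{def_ext_func_coarea} of $\Fc(u,\Omega)$. Each of the two double integrals becomes a triple integral of the nonnegative function
$
(x,y,t)\longmapsto \dfrac{|\chi_{\{u>t\}}(x)-\chi_{\{u>t\}}(y)|}{|x-y|^{n+s}},
$
over $\Omega\times\Omega\times\R$ and over $\Omega\times\Co\Omega\times\R$ respectively. By Tonelli's theorem I may pull the $dt$-integral to the outside; for fixed $t$ the inner double integrals reassemble into
$
\tfrac12\int_\Omega\int_\Omega \tfrac{|\chi_{E_t}(x)-\chi_{E_t}(y)|}{|x-y|^{n+s}}\,dx\,dy
+\int_\Omega\int_{\Co\Omega} \tfrac{|\chi_{E_t}(x)-\chi_{E_t}(y)|}{|x-y|^{n+s}}\,dx\,dy
=\Fc(\chi_{E_t},\Omega),
$
where $E_t:=\{u>t\}$. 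Since $\Fc(\chi_{E_t},\Omega)=P_s(E_t,\Omega)$ (as already observed in the excerpt), this yields
$
\Fc(u,\Omega)=\int_{-\infty}^\infty P_s(\{u>t\},\Omega)\,dt,
$
which is \eqref{coarea_formula}.

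The only genuine point requiring care is the justification of the interchange of integrals: this is where one must be sure to invoke Tonelli rather than Fubini, so that no a priori finiteness of $\Fc(u,\Omega)$ (or measurability-of-$u$ beyond the obvious) is assumed — the formula then holds with both sides simultaneously finite or infinite. Finally, the stated particular case follows by taking $\Omega$-localized contributions: applying the same computation to just the first double integral in \eqref{def_ext_func_coarea} gives $\tfrac12[u]_{W^{s,1}(\Omega)}=\int_{-\infty}^\infty P_s^L(\{u>t\},\Omega)\,dt$, since $P_s^L(E,\Omega)=\tfrac12[\chi_E]_{W^{s,1}(\Omega)}=\mathcal L_s(E\cap\Omega,\Co E\cap\Omega)$ is exactly the $\Omega\times\Omega$ part of $P_s(E,\Omega)$.
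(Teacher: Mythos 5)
Your proposal is correct and follows essentially the same route as the paper: establish the pointwise layer-cake identity $|u(x)-u(y)|=\int_{-\infty}^\infty|\chi_{\{u>t\}}(x)-\chi_{\{u>t\}}(y)|\,dt$, then integrate against the kernel and swap the order of integration. Your explicit appeal to Tonelli (rather than Fubini) for the nonnegative integrand is, if anything, slightly more careful than the paper's wording, but the argument is the same.
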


\begin{subsection}{Approximation results for the functional $\Fc$}

In this Section we prove the approximation properties for the functional $\Fc$ which we
need for the proofs of Theorem $\ref{density_smooth_teo}$ and Theorem $\ref{appro_in_bded_open}$.
To this end we consider a (symmetric) mollifier $\eta$, that is
\begin{equation*}
\eta\in C^\infty_c(\R^n),\quad\textrm{supp }\eta\subset B_1,\quad\eta\geq0,\quad\eta(-x)=\eta(x),\quad\int_{\R^n}\eta\,dx=1,
\end{equation*}
and we set
\begin{equation*}
\eta_\eps(x):=\frac{1}{\eps^n}\eta\Big(\frac{x}{\eps}\Big),
\end{equation*}
for every $\eps\in(0,1)$.
Notice that supp $\eta_\eps\subset B_\eps$ and $\int_{\R^n}\eta_\eps=1$.

Given $u\in L^1_{loc}(\R^n)$, we define the $\eps$-regularization of $u$ as the convolution
\begin{equation*}
u_\eps(x):=(u\ast\eta_\eps)(x)=\int_{\R^n}u(x-\xi)\eta_\eps(\xi)\,d\xi,\quad\textrm{for every }x\in\R^n.
\end{equation*}
It is well known that $u_\eps\in C^\infty(\R^n)$ and
\begin{equation*}
u_\eps\longrightarrow u\qquad\textrm{in }L^1_{loc}(\R^n).
\end{equation*}
Moreover, if $u=\chi_E$, then
\begin{equation}\label{forml1_smoothing}
0\leq u_\eps\leq1\qquad\textrm{and}\quad u_\eps(x)=\left\{\begin{array}{cc}1, & \textrm{if }|B_\eps(x)\setminus E|=0\\
0, & \textrm{if }|B_\eps(x)\cap E|=0\end{array}\right.,
\end{equation}
(see e.g. Section 12.3 of \cite{Maggi}).

\begin{lem}\label{dens_lemma}
$(i)\quad$ Let $u\in L^1_{loc}(\R^n)$ and let $\Omega\subset\R^n$ be an open set. Then
\begin{equation}\label{forml_lemma_coarea}
\Fc(u,\Omega)<\infty\quad\Longrightarrow\quad\lim_{\eps\to0^+}\Fc(u_\eps,\Omega')=\Fc(u,\Omega')\qquad\forall\,\Omega'\subset\subset\Omega.
\end{equation}
$(ii)\quad$ Let $u\in W^{s,1}(\R^n)$. Then
\begin{equation*}
\lim_{\eps\to0}[u_\eps]_{W^{s,1}(\R^n)}=[u]_{W^{s,1}(\R^n)}.
\end{equation*}
$(iii)\quad$ Let $u\in W^{s,1}(\R^n)$. Then there exists $\{u_k\}\subset C_c^\infty(\R^n)$ such that
\begin{equation*}
\|u-u_k\|_{L^1(\R^n)}\longrightarrow0\quad\textrm{and}\quad\lim_{k\to\infty}[u_k]_{W^{s,1}(\R^n)}=[u]_{W^{s,1}(\R^n)}.
\end{equation*}
Moreover, if $u=\chi_E$, then $0\leq u_k\leq1$.

\begin{proof}

$(i)\quad$
Given $\mathcal O\subset\R^n$, let $Q(\mathcal O):=\R^{2n}\setminus(\Co\mathcal O)^2$, so that
\begin{equation*}
\Fc(u,\mathcal O)=\frac{1}{2}\int_{Q(\mathcal O)}\frac{|u(x)-u(y)|}{|x-y|^{n+s}}\,dx\,dy.
\end{equation*}
 Notice that if $\mathcal O\subset\Omega$, then
$Q(\mathcal O)\subset Q(\Omega)$
and hence
\begin{equation}\label{forml1_coarea}
\Fc(u,\mathcal O)\leq\Fc(u,\Omega).
\end{equation}
Now let $\Omega'\subset\subset\Omega$ and notice that for $\eps$ small enough we have
\begin{equation}\label{forml2_coarea}
Q(\Omega'-\eps\xi)\subset Q(\Omega)\qquad\textrm{for every }\xi\in B_1.
\end{equation}
As a consequence
\begin{equation}\label{forml4_coarea}
\Fc(u_\eps,\Omega')\leq\int_{B_1}\Fc(u,\Omega'-\eps\xi)\eta(\xi)\,d\xi\leq\Fc(u,\Omega).
\end{equation}
The second inequality follows from $(\ref{forml2_coarea}),\,(\ref{forml1_coarea})$ and $\int_{B_1}\eta=1$.\\
As for the first inequality, we have
\begin{equation*}\begin{split}
\int_{Q(\Omega')}&\frac{|u_\eps(x)-u_\eps(y)|}{|x-y|^{n+s}}dx\,dy\\
&
=\int_{Q(\Omega')}\Big|\int_{\R^n}\big(u(x-\xi)-u(y-\xi)\big)\frac{1}{\eps^n}\eta\Big(\frac{\xi}{\eps}\Big)\,d\xi\Big|\frac{dx\,dy}{|x-y|^{n+s}}\\
&
=\int_{Q(\Omega')}\Big|\int_{B_1}\big(u(x-\eps\xi)-u(y-\eps\xi)\big)\eta(\xi)\,d\xi\Big|\frac{dx\,dy}{|x-y|^{n+s}}\\
&
\leq\int_{B_1}\Big(\int_{Q(\Omega')}\frac{|u(x-\eps\xi)-u(y-\eps\xi)|}{|x-y|^{n+s}}dx\,dy\Big)\eta(\xi)\,d\xi\\
&
=\int_{B_1}\Big(\int_{Q(\Omega'-\eps\xi)}\frac{|u(x)-u(y)|}{|x-y|^{n+s}}dx\,dy\Big)\eta(\xi)\,d\xi.
\end{split}\end{equation*}


We prove something stronger than the claim, that is
\begin{equation}\label{forml_app_98}
\lim_{\eps\to0^+}\Fc(u_\eps-u,\Omega')=0.
\end{equation}
Indeed, notice that
\begin{equation*}
|\Fc(u_\eps,\Omega')-\Fc(u,\Omega')|\leq\Fc(u_\eps-u,\Omega').
\end{equation*}
Let $\psi:\R^{2n}\longrightarrow\R$ be defined as
\begin{equation*}
\psi(x,y):=\frac{u(x)-u(y)}{|x-y|^{n+s}}.
\end{equation*}
Moreover, for every $\eps>0$ and $\xi\in B_1$, we consider the left translation by $\eps(\xi,\xi)$
in $\R^{2n}$, that is
\begin{equation*}
(L_{\eps\xi}f)(x,y):=f(x-\eps\xi,y-\eps\xi),
\end{equation*}
for every $f:\R^{2n}\longrightarrow\R$.\\
Since $\psi\in L^1(Q(\Omega))$, for every $\delta>0$ there exists $\Psi\in C_c^1(Q(\Omega))$ such that
\begin{equation*}
\|\psi-\Psi\|_{L^1(Q(\Omega))}\leq\frac{\delta}{2}.
\end{equation*}
We have
\begin{equation*}\begin{split}
\Fc(u_\eps&-u,\Omega')
=\int_{Q(\Omega')}\frac{|u_\eps(x)-u_\eps(y)-u(x)+u(y)|}{|x-y|^{n+s}}dx\,dy\\
&
\leq\int_{B_1}\Big(\int_{Q(\Omega')}\frac{|u(x-\eps\xi)-u(y-\eps\xi)-u(x)+u(y)|}{|x-y|^{n+s}}dx\,dy\Big)\eta(\xi)\,d\xi\\
&
=\int_{B_1}\|L_{\eps\xi}\psi-\psi\|_{L^1(Q(\Omega'))}\eta(\xi)\,d\xi\\
&
\leq\int_{B_1}\Big(\|L_{\eps\xi}\psi-L_{\eps\xi}\Psi\|_{L^1(Q(\Omega'))}
+\|L_{\eps\xi}\Psi-\Psi\|_{L^1(Q(\Omega'))}\\
&
\qquad\qquad\qquad\qquad
+\|\Psi-\psi\|_{L^1(Q(\Omega'))}\Big)\eta(\xi)\,d\xi.
\end{split}\end{equation*}
Notice that
\begin{equation*}
\|L_{\eps\xi}\psi-L_{\eps\xi}\Psi\|_{L^1(Q(\Omega'))}
=\|\psi-\Psi\|_{L^1(Q(\Omega'-\eps\xi))}
\leq\|\psi-\Psi\|_{L^1(Q(\Omega))}
\end{equation*}
and hence
\begin{equation*}
\Fc(u_\eps-u,\Omega')
\leq\delta+\int_{B_1}\|L_{\eps\xi}\Psi-\Psi\|_{L^1(Q(\Omega'))}\eta(\xi)\,d\xi.
\end{equation*}
For $\eps>0$ small enough we have
\begin{equation*}
\textrm{supp}(L_{\eps\xi}\Psi-\Psi)\subset N_1(\textrm{supp }\Psi)=:K\subset\subset\R^{2n},
\end{equation*}
and
\begin{equation*}
|\Psi(x-\eps\xi,y-\eps\xi)-\Psi(x,y)|\leq2\max_{\textrm{supp }\Psi}|\nabla\Psi|\,\eps.
\end{equation*}
Thus
\begin{equation*}
\int_{B_1}\|L_{\eps\xi}\Psi-\Psi\|_{L^1(Q(\Omega'))}\eta(\xi)\,d\xi
\leq2|K|\max_{\textrm{supp }\Psi}|\nabla\Psi|\,\eps.
\end{equation*}
Passing to the limit as $\eps\to0^+$ then gives
\begin{equation*}
\limsup_{\eps\to0^+}\Fc(u_\eps-u,\Omega')\leq\delta.
\end{equation*}
Since $\delta$ is arbitrary, we get $(\ref{forml_app_98})$.

$(ii)\quad$ Reasoning as above we obtain
\begin{equation*}\begin{split}
\int_{\R^n}&\int_{\R^n}\frac{|u_\eps(x)-u_\eps(y)|}{|x-y|^{n+s}}dx\,dy\\
&
\leq\int_{B_1}\Big(\int_{\R^n}\int_{\R^n}\frac{|u(x-\eps\xi)-u(y-\eps\xi)|}{|x-y|^{n+s}}dx\,dy\Big)\eta(\xi)\,d\xi\\
&
=\int_{B_1}\Big(\int_{\R^n}\int_{\R^n}\frac{|u(x)-u(y)|}{|x-y|^{n+s}}dx\,dy\Big)\eta(\xi)\,d\xi\\
&
=[u]_{W^{s,1}(\R^n)}\int_{B_1}\eta(\xi)\,d\xi,
\end{split}\end{equation*}
that is
\begin{equation}
[u_\eps]_{W^{s,1}(\R^n)}\leq[u]_{W^{s,1}(\R^n)}.
\end{equation}
This and Fatou's Lemma give
\begin{equation*}
[u]_{W^{s,1}(\R^n)}\leq\liminf_{\eps\to0}[u_\eps]_{W^{s,1}(\R^n)}\leq\limsup_{\eps\to0}[u_\eps]_{W^{s,1}(\R^n)}
\leq[u]_{W^{s,1}(\R^n)},
\end{equation*}
concluding the proof.

$(iii)\quad$ The proof is a classical cut-off argument. We consider a sequence of cut-off functions $\psi_k\in C_c^\infty(\R^n)$
such that
\begin{equation*}
0\leq\psi_k\leq1,\quad\textrm{supp }\psi_k\subset B_{k+1}\quad\textrm{and}\quad\psi_k\equiv1\quad\textrm{in }B_k.
\end{equation*}
We can also assume that
\begin{equation*}
\sup_{k\in\mathbb N}|\nabla\psi_k|\leq M_0<\infty.
\end{equation*}
It is enough to show that
\begin{equation}\label{forml_lemma_approx1}
\lim_{k\to\infty}\|u-\psi_k u\|_{L^1(\R^n)}=0\quad\textrm{and}\quad\lim_{k\to\infty}[\psi_k u]_{W^{s,1}(\R^n)}=
[u]_{W^{s,1}(\R^n)}.
\end{equation}

Indeed then we can use $(ii)$ to approximate each $\psi_k u$ with a smooth function $u_k:=(u\psi_k)\ast\eta_{\eps_k}$,
for $\eps_k$ small enough to have
\begin{equation*}
\|\psi_k u-u_k\|_{L^1(\R^n)}<2^{-k}\quad\textrm{and}\quad|[\psi_k u]_{W^{s,1}(\R^n)}-[u_k]_{W^{s,1}(\R^n)}|<2^{-k}.
\end{equation*}
Therefore
\begin{equation*}
\|u-u_k\|_{L^1(\R^n)}
\leq\|u-\psi_k u\|_{L^1(\R^n)}+2^{-k}\longrightarrow0
\end{equation*}
and
\begin{equation*}
|[u]_{W^{s,1}(\R^n)}-[u_k]_{W^{s,1}(\R^n)}|\leq|[u]_{W^{s,1}(\R^n)}-[\psi_k u]_{W^{s,1}(\R^n)}|+2^{-k}\longrightarrow0.
\end{equation*}
Also notice that
\begin{equation*}
\textrm{supp }u_k\subset N_{\eps_k}(\textrm{supp }\psi_k u)\subset B_{k+2}
\end{equation*}
so that $u_k\in C_c^\infty(\R^n)$ for every $k$. Moreover, from the definition of $u_k$ it follows that if $u=\chi_E$, then
$0\leq u_k\leq1$.

For a proof of $(\ref{forml_lemma_approx1})$ see e.g. Lemma 12 in \cite{frac_density}.
\end{proof}
\end{lem}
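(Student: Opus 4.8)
The plan is to reduce everything to the one–integral form of the functional: writing $Q(\mathcal O):=\R^{2n}\setminus(\Co\mathcal O)^2$, we have $\Fc(u,\mathcal O)=\frac12\int_{Q(\mathcal O)}|u(x)-u(y)|\,|x-y|^{-n-s}\,dx\,dy$, so $\mathcal O\subset\Omega$ forces $Q(\mathcal O)\subset Q(\Omega)$, hence $\Fc(u,\mathcal O)\le\Fc(u,\Omega)$. The engine for $(i)$ and $(ii)$ is that $u_\eps=\int_{B_1}u(\cdot-\eps\xi)\eta(\xi)\,d\xi$ is an average of translates, so Jensen's (equivalently Minkowski's integral) inequality gives $\Fc(u_\eps,\Omega')\le\int_{B_1}\Fc\big(u(\cdot-\eps\xi),\Omega'\big)\eta(\xi)\,d\xi=\int_{B_1}\Fc(u,\Omega'-\eps\xi)\eta(\xi)\,d\xi$; since $\Omega'-\eps\xi\subset\subset\Omega$ for all $\xi\in B_1$ once $\eps$ is small, this is $\le\Fc(u,\Omega)<\infty$, uniformly in $\eps$.

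For $(i)$ I would upgrade this uniform bound to convergence by proving the stronger statement $\Fc(u_\eps-u,\Omega')\to0$, which suffices since $|\Fc(u_\eps,\Omega')-\Fc(u,\Omega')|\le\Fc(u_\eps-u,\Omega')$. Set $\psi(x,y):=(u(x)-u(y))\,|x-y|^{-n-s}\in L^1(Q(\Omega))$, and for $\xi\in B_1$ let $L_{\eps\xi}$ be translation by $\eps(\xi,\xi)$ in $\R^{2n}$; the same averaging yields $\Fc(u_\eps-u,\Omega')\le\int_{B_1}\|L_{\eps\xi}\psi-\psi\|_{L^1(Q(\Omega'))}\eta(\xi)\,d\xi$. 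Choosing $\Psi\in C^1_c(Q(\Omega))$ with $\|\psi-\Psi\|_{L^1(Q(\Omega))}$ small and splitting $L_{\eps\xi}\psi-\psi=(L_{\eps\xi}\psi-L_{\eps\xi}\Psi)+(L_{\eps\xi}\Psi-\Psi)+(\Psi-\psi)$, the two outer terms are controlled by $\|\psi-\Psi\|_{L^1(Q(\Omega))}$ (translations are $L^1$ isometries and the shifted domains $Q(\Omega'-\eps\xi)$ stay inside $Q(\Omega)$ for $\eps$ small), while the middle term is $O(\eps)$ because $\Psi$ is Lipschitz with compact support; a routine $3\eps$ argument finishes $(i)$. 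For $(ii)$ there is no domain issue: the averaging gives $[u_\eps]_{W^{s,1}(\R^n)}\le[u]_{W^{s,1}(\R^n)}$ outright, and since $u_\eps\to u$ in $L^1_{loc}$, hence a.e. along a subsequence, Fatou's lemma applied to the double integral gives $[u]_{W^{s,1}(\R^n)}\le\liminf_{\eps\to0}[u_\eps]_{W^{s,1}(\R^n)}$; squeezing gives the claim.

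For $(iii)$ I would run the cut-off argument. Fix $\psi_k\in C^\infty_c(\R^n)$ with $0\le\psi_k\le1$, $\psi_k\equiv1$ on $B_k$, $\textrm{supp}\,\psi_k\subset B_{k+1}$ and $\sup_k\|\nabla\psi_k\|_\infty\le M_0$. Then $\psi_k u\to u$ in $L^1(\R^n)$ by dominated convergence, and from $(\psi_k u)(x)-(\psi_k u)(y)=\psi_k(x)\big(u(x)-u(y)\big)+u(y)\big(\psi_k(x)-\psi_k(y)\big)$ together with $0\le\psi_k\le1$ one gets $[\psi_k u]_{W^{s,1}(\R^n)}\le[u]_{W^{s,1}(\R^n)}+R_k$, where $R_k:=\int\!\!\int|u(y)|\,|\psi_k(x)-\psi_k(y)|\,|x-y|^{-n-s}\,dx\,dy$. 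Estimating $|\psi_k(x)-\psi_k(y)|\le\min\{2,M_0|x-y|\}$, using $\int_{\R^n}\min\{2,M_0|z|\}\,|z|^{-n-s}\,dz<\infty$ (convergent at the origin since $s<1$, at infinity since $n+s>n$), and noting the integrand vanishes unless $x$ or $y$ leaves $B_k$, dominated convergence gives $R_k\to0$; combined with lower semicontinuity (Fatou) this yields $[\psi_k u]_{W^{s,1}(\R^n)}\to[u]_{W^{s,1}(\R^n)}$, and in particular each $\psi_k u\in W^{s,1}(\R^n)$ has compact support. Finally, applying $(ii)$ to $\psi_k u$ and mollifying, set $u_k:=(\psi_k u)\ast\eta_{\eps_k}\in C^\infty_c(\R^n)$ with $\eps_k$ small enough that $\|\psi_k u-u_k\|_{L^1(\R^n)}<2^{-k}$ and $\big|[\psi_k u]_{W^{s,1}(\R^n)}-[u_k]_{W^{s,1}(\R^n)}\big|<2^{-k}$; two triangle inequalities give $\|u-u_k\|_{L^1(\R^n)}\to0$ and $[u_k]_{W^{s,1}(\R^n)}\to[u]_{W^{s,1}(\R^n)}$, and if $u=\chi_E$ then $0\le\psi_k u\le1$ and convolution with the nonnegative $\eta_{\eps_k}$ of total mass $1$ forces $0\le u_k\le1$.

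The main obstacle I expect is the passage from the uniform one-sided bound to the actual convergence in $(i)$: $\psi$ is merely an $L^1$ function (globally integrable by hypothesis, but one should not attempt pointwise control near the diagonal), so the argument must be organised so that every translated domain $Q(\Omega'-\eps\xi)$ stays inside $Q(\Omega)$, uniformly in $\xi\in B_1$ for small $\eps$, which is exactly what makes the $L^1(Q(\Omega))$-approximation of $\psi$ and the $L^1$-continuity of translations usable. The analogous delicate point in $(iii)$ is the commutator term $R_k$, where one must exploit both $s<1$ (integrability of $\min\{2,M_0|z|\}\,|z|^{-n-s}$ near $z=0$) and the faster-than-$|z|^{-n}$ decay at infinity, together with $u\in L^1(\R^n)$, to push $R_k\to0$.
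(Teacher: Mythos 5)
Your proposal is correct and follows essentially the same route as the paper for all three parts: the averaging/Jensen bound over translates for \eqref{forml4_coarea}, the stronger claim $\Fc(u_\eps-u,\Omega')\to0$ via $L^1$-approximation of $\psi$ by some $\Psi\in C^1_c(Q(\Omega))$ and $L^1$-continuity of translations, Fatou plus the uniform bound for $(ii)$, and the cut-off-then-mollify scheme for $(iii)$. The only difference is that for \eqref{forml_lemma_approx1} the paper cites Lemma 12 of \cite{frac_density}, whereas you prove it directly with the commutator estimate $R_k\le\iint|u(y)|\min\{2,M_0|x-y|\}|x-y|^{-n-s}\,dx\,dy$ and dominated convergence, which is a correct and self-contained rendering of that cited step.
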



Now we show that if $\Omega$ is a bounded open set with Lipschitz boundary and if $u=\chi_E$,
then we can find smooth functions $u_h$ such that
\[\Fc(u_h,\Omega)\longrightarrow\Fc(u,\Omega).\]

We first need the following two results.

\begin{lem}\label{stuff_forml1}
Let $\Omega\subset\R^n$ be a bounded open set with Lipschitz boundary. Let $u\in L^\infty(\R^n)$ be such that
$\Fc(u,\Omega)<\infty$. For every $\delta\in(0,r_0)$ let
\begin{equation*}
\varphi_\delta:=1-\chi_{\{|\bar{d}_\Omega|<\delta\}}.
\end{equation*}
Then
\begin{equation}\label{global_bded_conv_l1}
u\varphi_\delta\xrightarrow{\delta\to0}u\quad\textrm{in }L^1(\R^n),
\end{equation}
and
\begin{equation}
\lim_{\delta\searrow0^+}\Fc(u\varphi_\delta,\Omega)=\Fc(u,\Omega).
\end{equation}

\begin{proof}
First of all, notice that
\begin{equation*}
\int_{\R^n}|u\varphi_\delta-u|\,dx=
\int_{\{|\bar{d}_\Omega|<\delta\}}|u|\,dx\leq\|u\|_{L^\infty(\R^n)}\,|\{|\bar{d}_\Omega|<\delta\}|\xrightarrow{\delta\to0}0.
\end{equation*}
Now
\begin{equation*}\begin{split}
\int_\Omega&\int_\Omega\frac{|(u\varphi_\delta)(x)-(u\varphi_\delta)(y)|}{|x-y|^{n+s}}\,dx\,dy\\
&
=
\int_{\Omega_{-\delta}}\int_{\Omega_{-\delta}}\frac{|u(x)-u(y)|}{|x-y|^{n+s}}\,dx\,dy
+
2\int_{\Omega_{-\delta}}\Big(\int_{\Omega\setminus\Omega_{-\delta}}\frac{|u(x)|}{|x-y|^{n+s}}\,dy\Big)dx.
\end{split}\end{equation*}
Since $\Omega_{-\delta}\subset\Omega$, we have
\begin{equation*}
\int_{\Omega_{-\delta}}\int_{\Omega_{-\delta}}\frac{|u(x)-u(y)|}{|x-y|^{n+s}}\,dx\,dy
\leq
\int_\Omega\int_\Omega\frac{|u(x)-u(y)|}{|x-y|^{n+s}}\,dx\,dy.
\end{equation*}
On the other hand, since $|\Omega\setminus\Omega_{-\delta}|\longrightarrow0$, we get
\begin{equation*}
\frac{|u(x)-u(y)|}{|x-y|^{n+s}}\chi_{\Omega_{-\delta}}(x)\chi_{\Omega_{-\delta}}(y)\xrightarrow{\delta\to0}
\frac{|u(x)-u(y)|}{|x-y|^{n+s}}\chi_\Omega(x)\chi_\Omega(y),
\end{equation*}
for a.e. $(x,y)\in\R^n\times\R^n$.

Therefore, by Fatou's Lemma we obtain
\begin{equation}\label{stuff_formul1}
[u]_{W^{s,1}(\Omega)}\leq\liminf_{\delta\searrow0}[u]_{W^{s,1}(\Omega_{-\delta})}
\leq\limsup_{\delta\searrow0}[u]_{W^{s,1}(\Omega_{-\delta})}\leq[u]_{W^{s,1}(\Omega)}.
\end{equation}
Moreover, by point $(i)$ of $(\ref{uniform_bound_strips})$ we get
\begin{equation*}\begin{split}
2\int_{\Omega_{-\delta}}\Big(\int_{\Omega\setminus\Omega_{-\delta}}\frac{|u(x)|}{|x-y|^{n+s}}\,dy\Big)dx&
\leq2\|u\|_{L^\infty(\R^n)}\Ll_s(\Omega_{-\delta},\Omega\setminus\Omega_{-\delta})\\
&
\leq 2C\|u\|_{L^\infty(\R^n)}\,\delta^{1-s}.
\end{split}\end{equation*}
Therefore we find
\begin{equation}
\lim_{\delta\searrow0}[u\varphi_\delta]_{W^{s,1}(\Omega)}=[u]_{W^{s,1}(\Omega)}.
\end{equation}

Now
\begin{equation*}\begin{split}
\int_\Omega&\int_{\Co\Omega}\frac{|(u\varphi_\delta)(x)-(u\varphi_\delta)(y)|}{|x-y|^{n+s}}\,dx\,dy\\
&
=
\int_{\Omega_{-\delta}}\int_{\Co\Omega_\delta}\frac{|u(x)-u(y)|}{|x-y|^{n+s}}\,dx\,dy
+
\int_{\Omega_{-\delta}}\Big(\int_{\Omega_\delta\setminus\Omega}\frac{|u(x)|}{|x-y|^{n+s}}\,dy\Big)dx\\
&
\qquad\qquad+
\int_{\Omega\setminus\Omega_{-\delta}}\Big(\int_{\Co\Omega_\delta}\frac{|u(x)|}{|x-y|^{n+s}}\,dy\Big)dx.
\end{split}\end{equation*}
Since $\Omega_{-\delta}\subset\Omega$ and $\Co\Omega_\delta\subset\Co\Omega$, we have
\begin{equation*}
\int_{\Omega_{-\delta}}\int_{\Co\Omega_\delta}\frac{|u(x)-u(y)|}{|x-y|^{n+s}}\,dx\,dy
\leq
\int_\Omega\int_{\Co\Omega}\frac{|u(x)-u(y)|}{|x-y|^{n+s}}\,dx\,dy.
\end{equation*}
Moreover, since both $|\Omega\setminus\Omega_{-\delta}|\longrightarrow0$ and
$|\Co\Omega\setminus\Co\Omega_\delta|\longrightarrow0$, we have
\begin{equation*}
\frac{|u(x)-u(y)|}{|x-y|^{n+s}}\chi_{\Omega_{-\delta}}(x)\chi_{\Co\Omega_\delta}(y)\xrightarrow{\delta\to0}
\frac{|u(x)-u(y)|}{|x-y|^{n+s}}\chi_\Omega(x)\chi_{\Co\Omega}(y),
\end{equation*}
for a.e. $(x,y)\in\R^n\times\R^n$.

Therefore, again by Fatou's Lemma we obtain
\begin{equation}
\lim_{\delta\searrow0}\int_{\Omega_{-\delta}}\int_{\Co\Omega_\delta}\frac{|u(x)-u(y)|}{|x-y|^{n+s}}\,dx\,dy
=
\int_\Omega\int_{\Co\Omega}\frac{|u(x)-u(y)|}{|x-y|^{n+s}}\,dx\,dy.
\end{equation}

Furthermore, by point $(ii)$ of $(\ref{uniform_bound_strips})$ we get
\begin{equation*}\begin{split}
\int_{\Omega_{-\delta}}\Big(&\int_{\Omega_\delta\setminus\Omega}\frac{|u(x)|}{|x-y|^{n+s}}\,dy\Big)dx
\leq\|u\|_{L^\infty(\R^n)}\Ll_s(\Omega_{-\delta},\Omega_\delta\setminus\Omega)\\
&
\leq\|u\|_{L^\infty(\R^n)}\Ll_s(\Omega,\Omega_\delta\setminus\Omega)
\leq C\|u\|_{L^\infty(\R^n)}\delta^{1-s}
\end{split}\end{equation*}
and also
\begin{equation*}
\int_{\Omega\setminus\Omega_{-\delta}}\Big(\int_{\Co\Omega_\delta}\frac{|u(x)|}{|x-y|^{n+s}}\,dy\Big)dx
\leq C\|u\|_{L^\infty(\R^n)}\delta^{1-s}.
\end{equation*}
Thus
\begin{equation}
\lim_{\delta\searrow0}\int_\Omega\int_{\Co\Omega}\frac{|(u\varphi_\delta)(x)-(u\varphi_\delta)(y)|}{|x-y|^{n+s}}\,dx\,dy
=\int_\Omega\int_{\Co\Omega}\frac{|u(x)-u(y)|}{|x-y|^{n+s}}\,dx\,dy,
\end{equation}
concluding the proof.
\end{proof}
\end{lem}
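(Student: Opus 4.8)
First I would observe that the $L^1$ convergence \eqref{global_bded_conv_l1} is immediate: since $u\varphi_\delta$ equals $u$ outside the tubular neighbourhood $\{|\bar{d}_\Omega|<\delta\}$ and vanishes on it, $\|u\varphi_\delta-u\|_{L^1(\R^n)}\le\|u\|_{L^\infty(\R^n)}\,|\{|\bar{d}_\Omega|<\delta\}|$, and $|\{|\bar{d}_\Omega|<\delta\}|\searrow|\partial\Omega|=0$ as $\delta\searrow0$ by continuity from above of the (finite) Lebesgue measure, the boundary of a bounded Lipschitz domain being Lebesgue--null. For the convergence of $\Fc$ the plan is to decompose $\Omega=\Omega_{-\delta}\cup(\Omega\setminus\Omega_{-\delta})$ and $\Co\Omega=\Co\Omega_\delta\cup(\Omega_\delta\setminus\Omega)$, using that $\varphi_\delta\equiv1$ on $\Omega_{-\delta}$ and on $\Co\Omega_\delta$ while $\varphi_\delta\equiv0$ a.e.\ on the two ``strips'' $\Omega\setminus\Omega_{-\delta}$ and $\Omega_\delta\setminus\Omega$, and to feed this into the two pieces of $\Fc(u\varphi_\delta,\Omega)$, namely the local double integral over $\Omega\times\Omega$ and the nonlocal one over $\Omega\times\Co\Omega$. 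Each of these then splits into a ``regular'' term, where $u\varphi_\delta$ is simply $u$, plus cross terms supported on (or touching) a strip, where $u\varphi_\delta$ is $0$.

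On the local piece the regular term is $\int_{\Omega_{-\delta}}\int_{\Omega_{-\delta}}\frac{|u(x)-u(y)|}{|x-y|^{n+s}}\,dx\,dy=[u]_{W^{s,1}(\Omega_{-\delta})}$, which increases to $[u]_{W^{s,1}(\Omega)}$ as $\delta\searrow0$ by monotone convergence (and $[u]_{W^{s,1}(\Omega)}\le2\Fc(u,\Omega)<\infty$), while the cross term is $2\int_{\Omega_{-\delta}}\int_{\Omega\setminus\Omega_{-\delta}}\frac{|u(x)|}{|x-y|^{n+s}}\,dx\,dy\le2\|u\|_{L^\infty(\R^n)}\,\Ll_s(\Omega_{-\delta},\Omega\setminus\Omega_{-\delta})\le C\,\delta^{1-s}\to0$ by part $(i)$ of \eqref{uniform_bound_strips}. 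The nonlocal piece is handled in exactly the same way: the regular term $\int_{\Omega_{-\delta}}\int_{\Co\Omega_\delta}\frac{|u(x)-u(y)|}{|x-y|^{n+s}}\,dx\,dy$ is nondecreasing as $\delta\searrow0$ and converges, by monotone convergence, to $\int_\Omega\int_{\Co\Omega}\frac{|u(x)-u(y)|}{|x-y|^{n+s}}\,dx\,dy$ (here $|\partial\Omega|=0$ is what identifies the pointwise limit of $\chi_{\Omega_{-\delta}}(x)\chi_{\Co\Omega_\delta}(y)$ with $\chi_\Omega(x)\chi_{\Co\Omega}(y)$), while the two cross terms are bounded by $\|u\|_{L^\infty(\R^n)}\,\Ll_s(\Omega,\Omega_\delta\setminus\Omega)$ and $\|u\|_{L^\infty(\R^n)}\,\Ll_s(\Omega\setminus\Omega_{-\delta},\Co\Omega)$, hence by $C\,\delta^{1-s}\to0$, by part $(ii)$ of \eqref{uniform_bound_strips}. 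Adding the surviving limits reconstructs exactly $\Fc(u,\Omega)=\frac{1}{2}[u]_{W^{s,1}(\Omega)}+\int_\Omega\int_{\Co\Omega}\frac{|u(x)-u(y)|}{|x-y|^{n+s}}\,dx\,dy$.

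I do not expect a real obstacle: once the strip decomposition is written down, the argument is just a monotone passage to the limit on the regular terms together with the quantitative bounds \eqref{uniform_bound_strips} on the error terms, which is precisely what those bounds were established for. The only point deserving a little care is that the strips $\Omega\setminus\Omega_{-\delta}$ and $\Omega_\delta\setminus\Omega$ must shrink to Lebesgue--null sets --- otherwise neither the identification of the pointwise limits nor the cross-term estimates would be available --- and this, along with the uniform perimeter bound for the $\Omega_r$ that underlies \eqref{uniform_bound_strips}, is exactly where the boundedness and Lipschitz regularity of $\Omega$ enter.
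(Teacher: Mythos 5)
Your proposal is correct and follows essentially the same route as the paper: the identical strip decomposition of the local and nonlocal pieces, with the cross terms killed by the $\delta^{1-s}$ bounds of \eqref{uniform_bound_strips} and the main terms passed to the limit. The only (immaterial) difference is that you invoke monotone convergence on the increasing families $\Omega_{-\delta}\nearrow\Omega$ and $\Co\Omega_\delta\nearrow\Co\overline{\Omega}$, whereas the paper uses Fatou's lemma together with the trivial upper bound; both are valid, and you correctly flag the two points that make either version work, namely $|\partial\Omega|=0$ and the fact that $\varphi_\delta$ vanishes a.e.\ on the strips.
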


\begin{lem}\label{second_lemma_for_func_appro}
Let $\Omega\subset\R^n$ be a bounded open set with Lipschitz boundary. Let $v\in L^\infty(\R^n)$ be
such that $\Fc(v,\Omega)<\infty$ and
\begin{equation*}
v\equiv0\quad\textrm{in }\{|\bar{d}_\Omega|<\delta/2\},
\end{equation*}
for some $\delta\in(0,r_0)$. Then
\begin{equation}
\big|\Fc(v,\Omega)-\Fc(v,\Omega_{-\delta/2})\big|\leq C\|v\|_{L^\infty(\R^n)}\delta^{1-s},
\end{equation}
where $C=C(n,s,\Omega)>0$ does not depend on $v$.

\begin{proof}
Since
\begin{equation*}
v\equiv0\quad\textrm{in }\{|\bar{d}_\Omega|<\delta/2\},
\end{equation*}
we have
\begin{equation*}
\Fc(v,\Omega)=\Fc(v,\Omega_{-\delta/2})
+2\int_{\Omega\setminus\Omega_{-\delta/2}}\Big(\int_{\Co\Omega_{\delta/2}}\frac{|v(y)|}{|x-y|^{n+s}}\,dy\Big)dx.
\end{equation*}
Now, by point $(ii)$ of $(\ref{uniform_bound_strips})$ we have
\begin{equation*}\begin{split}
\int_{\Omega\setminus\Omega_{-\delta/2}}\Big(\int_{\Co\Omega_{\delta/2}}\frac{|v(y)|}{|x-y|^{n+s}}\,dy\Big)&
\leq \|v\|_{L^\infty(\R^n)}\Ll_s(\Omega\setminus\Omega_{-\delta/2},\Co\Omega)\\
&
\leq 2^{s-1}C\|v\|_{L^\infty(\R^n)}\,\delta^{1-s}.
\end{split}
\end{equation*}
\end{proof}
\end{lem}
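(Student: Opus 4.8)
The goal is the decomposition identity for $\Fc(v,\Omega)$ together with the estimate on the remainder. The starting point is the splitting of $\Fc$ afforded by Proposition \ref{subopensets} applied to the functional $\Fc$ via the coarea formula, or more directly by simply writing out $Q(\Omega) = Q(\Omega_{-\delta/2}) \cup \big(Q(\Omega)\setminus Q(\Omega_{-\delta/2})\big)$ and integrating. The point is that $Q(\Omega)\setminus Q(\Omega_{-\delta/2})$ consists of pairs $(x,y)$ with at least one coordinate in the strip $\{\bar d_\Omega \geq -\delta/2\}$. Since $v$ vanishes identically on $\{|\bar d_\Omega|<\delta/2\}$, in every such pair at least one of $v(x),v(y)$ can be nonzero only if the corresponding point lies in $\Co\Omega_{\delta/2} = \{\bar d_\Omega \geq \delta/2\}$, while the other — being in the strip — makes $|v(x)-v(y)|$ equal to $|v(y)|$ (or $|v(x)|$) up to the symmetry of the kernel. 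Collecting the two symmetric contributions yields exactly
\[
\Fc(v,\Omega) = \Fc(v,\Omega_{-\delta/2}) + 2\int_{\Omega\setminus\Omega_{-\delta/2}}\Big(\int_{\Co\Omega_{\delta/2}}\frac{|v(y)|}{|x-y|^{n+s}}\,dy\Big)dx,
\]
which is the identity asserted in the proof.

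Next I would bound the cross term. Pull out $\|v\|_{L^\infty(\R^n)}$ and recognize what remains as $\Ll_s(\Omega\setminus\Omega_{-\delta/2},\Co\Omega_{\delta/2})$, which is dominated by $\Ll_s(\Omega\setminus\Omega_{-\delta/2},\Co\Omega)$ since $\Co\Omega_{\delta/2}\subset\Co\Omega$ (here one uses $\delta/2 > 0$, hence $\Omega_{\delta/2}\supset\Omega$). Now invoke the second half of point $(ii)$ of \eqref{uniform_bound_strips}, applied with $\delta/2$ in place of $\delta$ — this is legitimate since $\delta\in(0,r_0)$ forces $\delta/2\in(0,r_0)$ — to get
\[
\Ll_s(\Omega\setminus\Omega_{-\delta/2},\Co\Omega) \leq C\,(\delta/2)^{1-s} = 2^{s-1}C\,\delta^{1-s},
\]
with $C$ the explicit dimensional constant from the lemma preceding. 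Multiplying by $2\|v\|_{L^\infty(\R^n)}$ gives the bound $2^s C\|v\|_{L^\infty(\R^n)}\delta^{1-s}$ on the full cross term, so relabelling the constant yields the claimed inequality $\big|\Fc(v,\Omega)-\Fc(v,\Omega_{-\delta/2})\big|\leq C\|v\|_{L^\infty(\R^n)}\delta^{1-s}$ with $C=C(n,s,\Omega)$ independent of $v$.

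The only genuinely delicate point is the bookkeeping in the first step: one must be careful that $Q(\Omega)\setminus Q(\Omega_{-\delta/2})$ really decomposes into the two symmetric "mixed" regions and that, after using $v\equiv 0$ on the strip, the surviving integrand is supported where one variable ranges over $\Omega\setminus\Omega_{-\delta/2}$ and the other over $\Co\Omega_{\delta/2}$; the contribution from pairs with \emph{both} variables in $\Omega\setminus\Omega_{-\delta/2}$ vanishes because $v$ is zero on the substrip $\{|\bar d_\Omega|<\delta/2\}$ and the remaining portion $\{\delta/2 \leq \bar d_\Omega\}$ is precisely $\Co\Omega_{\delta/2}$ — so there is no double counting. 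Once this is set up correctly everything else is a direct application of the uniform strip estimate already proved, and no further work with $\Fc(v,\Omega_{-\delta/2})$ itself (which may well be infinite a priori, though here it is finite by $\Fc(v,\Omega)<\infty$ and $\Omega_{-\delta/2}\subset\Omega$) is needed.
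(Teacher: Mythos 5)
Your proof is correct and follows essentially the same route as the paper: the same decomposition of $\Fc(v,\Omega)-\Fc(v,\Omega_{-\delta/2})$ into the cross term supported on $(\Omega\setminus\Omega_{-\delta/2})\times\Co\Omega_{\delta/2}$, followed by the same application of point $(ii)$ of \eqref{uniform_bound_strips} with $\delta/2$ in place of $\delta$. One shared (and harmless) quibble: writing $\Fc=\frac12\int_{Q(\cdot)}$ and collecting the two symmetric mixed regions actually gives the cross term with coefficient $1$ rather than $2$, but since this only overestimates the difference, the claimed inequality follows either way.
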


\begin{prop}\label{density_bded_reg_set}
Let $\Omega\subset\R^n$ be a bounded open set with Lipschitz boundary. Let $u\in L^\infty(\R^n)$ be such that
$\Fc(u,\Omega)<\infty$.
Then there exists a sequence $\{u_h\}\subset C^\infty(\R^n)$ such that
\begin{equation}\begin{split}
&(i)\quad \|u_h\|_{L^\infty(\R^n)}\leq\|u\|_{L^\infty(\R^n)},\quad\textrm{and}\quad0\leq u_h\leq1\quad\textrm{if}\quad
0\leq u\leq1,\\
&
(ii)\quad
u_h\xrightarrow{h\to\infty}u\quad\textrm{in }L^1_{loc}(\R^n),\\
&
(iii)\quad\lim_{h\to\infty}\Fc(u_h,\Omega)=\Fc(u,\Omega).
\end{split}\end{equation}

\begin{proof}


By Lemma $\ref{stuff_forml1}$ we know that for every $h\in\mathbb N$ we can find $\delta_h$ small enough such that
\begin{equation}\label{forml_eqtn}
\|u-u\varphi_{\delta_h}\|_{L^1(\R^n)}<2^{-h}\quad\textrm{and}
\quad\big|\Fc(u,\Omega)-\Fc(u\varphi_{\delta_h},\Omega)\big|<2^{-h}.
\end{equation}
We can assume that $\delta_h\searrow0$.

By point $(i)$ of Lemma $\ref{dens_lemma}$ we know that for every $h$ we can find $\eps_h$ small enough such that
\begin{equation}\label{more_formul2}
\|(u\varphi_{\delta_h})\ast\eta_{\eps_h}-u\varphi_{\delta_h}\|_{L^1(B_h)}<2^{-h}
\end{equation}
and
\begin{equation}\label{more_formul1}
\big|\Fc(u\varphi_{\delta_h},\Omega_{-\delta_h/2})-\Fc((u\varphi_{\delta_h})\ast\eta_{\eps_h},\Omega_{-\delta_h/2})\big|<2^{-h}.
\end{equation}
Taking $\eps_h$ small enough, we can also assume that
\begin{equation}\label{more_formul3}
(u\varphi_{\delta_h})\ast\eta_{\eps_h}\equiv0\qquad\textrm{in }\{|\bar{d}_\Omega|<\delta_h/2\},
\end{equation}
since the $\eps$-convolution enlarges the support at most to an $\eps$-neighborhood of the original support.

Let $u_h:=(u\varphi_{\delta_h})\ast\eta_{\eps_h}$. Since we are taking the $\eps_h$-regularization of
the function $u\varphi_{\delta_h}$, which is just a ``rough'' cut-off of $u$, point $(i)$ of our claim is immediate.

By
$(\ref{more_formul2})$
and the first part of $(\ref{forml_eqtn})$ we get point $(ii)$.

As for point $(iii)$, exploiting $(\ref{more_formul3})$ and Lemma \ref{second_lemma_for_func_appro}, we obtain
\begin{equation*}\begin{split}
\big|\Fc(u,\Omega)&-\Fc(u_h,\Omega)\big|\\
&
\leq
\big|\Fc(u,\Omega)-\Fc(u\varphi_{\delta_h},\Omega)\big|+
\big|\Fc(u\varphi_{\delta_h},\Omega)-\Fc(u\varphi_{\delta_h},\Omega_{-\delta_h/2})\big|\\
&
\qquad+\big|\Fc(u\varphi_{\delta_h},\Omega_{-\delta_h/2})-\Fc(u_h,\Omega_{-\delta_h/2})\big|\\
&
\qquad\qquad+\big|\Fc(u_h,\Omega_{-\delta_h/2})-\Fc(u_h,\Omega)\big|\\
&
\leq 2^{-h}+2^sC\|u\|_{L^\infty(\R^n)}\delta_h^{1-s}+2^{-h},
\end{split}\end{equation*}
which goes to 0 as $h\longrightarrow\infty$.
\end{proof}
\end{prop}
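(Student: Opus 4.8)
The plan is to reduce everything to the two preparatory lemmas just proved, via the device ``first cut $u$ off near $\partial\Omega$, then mollify in the interior''. Ordinary mollification alone cannot work, because $\Fc(\cdot,\Omega)$ contains the nonlocal boundary term $\int_\Omega\int_{\Co\Omega}$ and Lemma \ref{dens_lemma}$(i)$ only yields convergence of $\Fc$ on sets $\Omega'\subset\subset\Omega$; the whole difficulty is concentrated near $\partial\Omega$, and the cut-off absorbs it.

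First I would use Lemma \ref{stuff_forml1}: the functions $u\varphi_\delta$, which agree with $u$ outside the $\delta$-tubular neighborhood of $\partial\Omega$ and vanish inside it, satisfy $u\varphi_\delta\to u$ in $L^1(\R^n)$ and $\Fc(u\varphi_\delta,\Omega)\to\Fc(u,\Omega)$ as $\delta\searrow0$. Hence for each $h$ I can pick $\delta_h\searrow0$ with $\|u-u\varphi_{\delta_h}\|_{L^1(\R^n)}<2^{-h}$ and $|\Fc(u,\Omega)-\Fc(u\varphi_{\delta_h},\Omega)|<2^{-h}$. Next I set $u_h:=(u\varphi_{\delta_h})\ast\eta_{\eps_h}$ with $\eps_h$ to be chosen small. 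For every choice of $\eps_h$ this is a $C^\infty$ function, and since both the rough cut-off and the convolution are contractions in $\|\cdot\|_{L^\infty}$ and preserve the range $[0,1]$, part $(i)$ is automatic. Shrinking $\eps_h$ I may also require: (a) $u_h\equiv0$ in $\{|\bar d_\Omega|<\delta_h/2\}$ (the convolution enlarges the support by at most $\eps_h$), and (b) $\|u_h-u\varphi_{\delta_h}\|_{L^1(B_h)}<2^{-h}$ by the $L^1_{loc}$-convergence of mollifications; together with the previous step this gives part $(ii)$.

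The crux is part $(iii)$. I would split, by the triangle inequality,
\[
|\Fc(u,\Omega)-\Fc(u_h,\Omega)|\le A_h+B_h+C_h+D_h,
\]
where $A_h=|\Fc(u,\Omega)-\Fc(u\varphi_{\delta_h},\Omega)|$, $B_h=|\Fc(u\varphi_{\delta_h},\Omega)-\Fc(u\varphi_{\delta_h},\Omega_{-\delta_h/2})|$, $C_h=|\Fc(u\varphi_{\delta_h},\Omega_{-\delta_h/2})-\Fc(u_h,\Omega_{-\delta_h/2})|$, and $D_h=|\Fc(u_h,\Omega_{-\delta_h/2})-\Fc(u_h,\Omega)|$. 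Then $A_h<2^{-h}$ by the first step. Since $\delta_h<r_0$, the set $\Omega_{-\delta_h/2}$ is compactly contained in $\Omega$, so Lemma \ref{dens_lemma}$(i)$ applies there and I can choose $\eps_h$ small enough that $C_h<2^{-h}$. The essential terms are $B_h$ and $D_h$: because $u\varphi_{\delta_h}$ and $u_h$ both vanish in $\{|\bar d_\Omega|<\delta_h/2\}$, Lemma \ref{second_lemma_for_func_appro} bounds each of them by $C\|u\|_{L^\infty(\R^n)}\delta_h^{1-s}$. Summing, $|\Fc(u,\Omega)-\Fc(u_h,\Omega)|\le 2^{1-h}+2^{s}C\|u\|_{L^\infty(\R^n)}\delta_h^{1-s}\to0$.

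The only genuinely nontrivial ingredient is the quantitative estimate of Lemma \ref{second_lemma_for_func_appro}, comparing $\Fc(v,\Omega)$ with $\Fc(v,\Omega_{-\delta/2})$ for $v$ supported away from $\partial\Omega$, which rests on the uniform bounds \eqref{uniform_bound_strips} for the $s$-interaction across thin strips around $\partial\Omega$, hence ultimately on the uniform perimeter bound \eqref{bound_perimeter_unif_eq} for the level sets of $\bar d_\Omega$. Everything else is a diagonal choice of the parameters $\delta_h,\eps_h$ together with routine triangle-inequality bookkeeping; I expect no further obstacle.
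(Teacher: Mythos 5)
Your proposal is correct and follows essentially the same route as the paper's proof: the same cut-off $u\varphi_{\delta_h}$ via Lemma \ref{stuff_forml1}, the same mollification with the support condition in $\{|\bar d_\Omega|<\delta_h/2\}$, and the same four-term triangle-inequality split for point $(iii)$, with Lemma \ref{second_lemma_for_func_appro} handling the two boundary terms and Lemma \ref{dens_lemma}$(i)$ the interior one. No gaps.
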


\end{subsection}

\begin{subsection}{Proof of Theorem $\ref{density_smooth_teo}$ and Theorem $\ref{appro_in_bded_open}$}

Exploiting Lemma $\ref{dens_lemma}$ and the coarea formula, we can now prove Theorem $\ref{density_smooth_teo}$.

\begin{proof}[Proof of Theorem $\ref{density_smooth_teo}$]
The ``if part'' is trivial. Indeed, just from point $(i)$ and the lower semicontinuity of the $s$-perimeter we get
\begin{equation*}
P_s(E,\Omega')\leq\liminf_{h\to\infty}P_s(E_h,\Omega')<\infty,
\end{equation*}
for every $\Omega'\subset\subset\Omega$.

Now suppose that $E$ has locally finite $s$-perimeter in $\Omega$.\\
The scheme of the proof is similar to that of the classical case (see e.g. the proof of Theorem 13.8 of \cite{Maggi}).

Given a sequence $\eps_h\searrow0^+$ we consider the $\eps_h$-regularization of $u:=\chi_E$ and define
the sets
\begin{equation*}
E_h^t:=\{u_{\eps_h}>t\}\quad\textrm{with }t\in(0,1).
\end{equation*}
Sard's Theorem guarantees that for a.e. $t\in(0,1)$ the sequence $\{E_h^t\}_h$ is made of open sets with smooth boundary.
We will get our sets $E_h$ by opportunely choosing $t$.

Since $u_{\eps_h}\longrightarrow\chi_E$ in $L^1_{loc}(\R^n)$, it is readily seen that for a.e. $t\in(0,1)$
\begin{equation*}
E_h^t\xrightarrow{loc}E,
\end{equation*}
and hence the lower semicontinuity of the $s$-perimeter gives
\begin{equation}\label{forml1_pf_coarea}
P_s(E,\mathcal O)\leq\liminf_{h\to\infty}P_s(E^t_h,\mathcal O),
\end{equation}
for every open set $\mathcal O\subset\R^n$.

Moreover from $(\ref{forml1_smoothing})$ we have
\begin{equation*}
\{0<u_\eps<1\}\subset N_\eps(\partial E)\qquad\forall\,\eps>0,
\end{equation*}
and hence, since $\partial E^t_h\subset\{u_{\eps_h}=t\}$, we obtain
\begin{equation}\label{forml6_pf_coarea}
\partial E_h^t\subset N_{\eps_h}(\partial E),
\end{equation}
which will give $(iii)$ once we choose our $t$.

We improve $(\ref{forml1_pf_coarea})$ by showing that, if $\Omega'\subset\subset\Omega$ is a fixed bounded open set,
then for a.e. $t\in(0,1)$ (with the set of exceptional values of $t$ possibly depending on $\Omega'$),
\begin{equation}\label{forml2_pf_coarea}
P_s(E,\Omega')=\liminf_{h\to\infty}P_s(E^t_h,\Omega').
\end{equation}
 By $(\ref{forml1_pf_coarea})$ and Fatou's Lemma, we have
\begin{equation}\label{forml5_pf_coarea}
P_s(E,\Omega')\leq\int_0^1\liminf_{h\to\infty}P_s(E_h^t,\Omega')\,dt\leq\liminf_{h\to\infty}\int_0^1 P_s(E_h^t,\Omega')\,dt.
\end{equation}
Let $\mathcal O$ be a bounded open set such that $\Omega'\subset\subset\mathcal O\subset\subset\Omega$.
Since $E$ has locally finite $s$-perimeter in $\Omega$, we have $P_s(E,\mathcal O)<\infty$.
Then, since $\Omega'\subset\subset\mathcal O$, point $(i)$ of Lemma $\ref{dens_lemma}$
(with $\mathcal O$ in the place of $\Omega$) implies
\begin{equation}\label{forml3_pf_coarea}
\lim_{h\to\infty}\Fc(u_{\eps_h},\Omega')=\Fc(\chi_E,\Omega')=P_s(E,\Omega').
\end{equation}
Since $0\leq u_{\eps_h}\leq1$, we have $E^t_h=\R^n$ if $t<0$ and $E^t_h=\emptyset$ if $t>1$, and hence
rewriting $(\ref{forml3_pf_coarea})$ exploiting the coarea formula,
\begin{equation*}
\lim_{h\to\infty}\int_0^1P_s(E_h^t,\Omega')\,dt=P_s(E,\Omega').
\end{equation*}
This and $(\ref{forml5_pf_coarea})$ give
\begin{equation*}
\int_0^1\liminf_{h\to\infty}P_s(E_h^t,\Omega')\,dt=P_s(E,\Omega')=\int_0^1 P_s(E,\Omega')\,dt,
\end{equation*}
which implies
\begin{equation}\label{end_approx_proof_eq}
P_s(E,\Omega')=\liminf_{h\to\infty}P_s(E_h^t,\Omega'),\quad\textrm{for a.e. }t\in(0,1),
\end{equation}
as claimed.

Now let the sets $\Omega_k\subset\subset\Omega$ be as in Corollary \ref{regular_approx_open_sets_coroll}.
From \eqref{end_approx_proof_eq} we deduce that for a.e. $t\in(0,1)$ we have
\begin{equation}\label{end_approx_proof_eq2}
P_s(E,\Omega_k)=\liminf_{h\to\infty}P_s(E_h^t,\Omega_k),\qquad\forall\,k\in\mathbb N.
\end{equation}

Therefore, combining all we wrote so far, we find that for a.e. $t\in(0,1)$ the sequence $\{E_h^t\}_h$ is made of open sets with smooth boundary such that $E_h^t\xrightarrow{loc}E$ and both
$(\ref{forml6_pf_coarea})$ and \eqref{end_approx_proof_eq2} hold true.

To conclude, by a diagonal argument we can find $t_0\in(0,1)$ and $h_i\nearrow\infty$ such that,
if we define $E_i:=E^{t_0}_{h_i}$, then
$\{E_i\}$ is a sequence of open sets with smooth boundary
such that $E_i\xrightarrow{loc}E$, with $\partial E_i\subset N_{\eps_{h_i}}(\partial E)$, and
\begin{equation}\label{forml7_pf_coarea}
P_s(E,\Omega_k)=\lim_{i\to\infty}P_s(E_i,\Omega_k),\qquad\forall\,k\in\mathbb N.
\end{equation}

Now notice that if $\Omega'\subset\subset\Omega$, then there exists a $k$ such that $\Omega'\subset\subset\Omega_k$.
Therefore by $(\ref{forml7_pf_coarea})$ and Proposition $\ref{subcont_lem_approx}$ we get $(ii)$.

This concludes the proof of the first part of the claim.\\

Now suppose that $\Omega=\R^n$ and $|E|,\,P_s(E)<\infty$.

Since $|E|<\infty$, we know that $u_\eps\longrightarrow\chi_E$ in $L^1(\R^n)$. Therefore we obtain
$E_h^t\longrightarrow E$ for a.e. $t\in(0,1)$.\\
Moreover, from point $(ii)$ of Lemma $\ref{dens_lemma}$ we know that
\begin{equation*}
\Fc(u,\R^n)<\infty\qquad\Longrightarrow\qquad\lim_{\eps\to0}\Fc(u_\eps,\R^n)=\Fc(u,\R^n).
\end{equation*}
We can thus repeat the proof above and obtain
\begin{equation*}
P_s(E)=\liminf_{h\to\infty}P_s(E_h^t),
\end{equation*}
for a.e. $t\in(0,1)$. For any fixed ``good'' $t_0\in(0,1)$ this directly implies, with no need of a diagonal argument,
the existence of a subsequence $h_i\nearrow\infty$ such that
\begin{equation*}
P_s(E)=\lim_{i\to\infty}P_s(E_{h_i}^{t_0}).
\end{equation*}
We are left to show that in this case we can take the sets $E_h$ to be bounded.

To this end, it is enough to replace the functions $u_{\eps_k}$ with the functions $u_k$
obtained in point $(iii)$ of Lemma $\ref{dens_lemma}$.\\
Indeed, since $u_k$ has compact support, for each $t\in(0,1)$ the set
\begin{equation*}
E_k^t:=\{u_k>t\}
\end{equation*}
is bounded. Since $u_k\longrightarrow u$ in $L^1(\R^n)$ we still find
\begin{equation*}
E_k^t\xrightarrow{loc}E\quad\textrm{for a.e. }t\in(0,1),
\end{equation*}
and, since $0\leq u_k\leq1$ and
\begin{equation*}
\lim_{k\to\infty}\Fc(u_k,\R^n)=P_s(E),
\end{equation*}
we can use again the coarea formula to conclude as above.
\end{proof}



\begin{proof}[Proof of Theorem $\ref{appro_in_bded_open}$]
Exploiting the approximating sequence obtained in Proposition $\ref{density_bded_reg_set}$, we can now prove Theorem
 $\ref{appro_in_bded_open}$ exactly as above.
 
 As for point $(iii)$, recall that the functions $u_h$ of Proposition $\ref{density_bded_reg_set}$ are defined as
 \[u_h=(\chi_E\varphi_{\delta_h})\ast\eta_{\eps_h}.\]
 Notice that, since we can suppose that $\eps_h<\delta_h/2$, we have
 \[u_h=\chi_E\ast\eta_{\eps_h},\qquad\textrm{in }\R^n\setminus N_{2\delta_h}(\partial\Omega).\]
 Therefore, for every $t\in(0,1)$ we find
 \[\partial\{u_h>t\}\subset N_{\eps_h}(\partial E)\subset N_{2\delta_h}(\partial E),\qquad
 \textrm{in }\R^n\setminus N_{2\delta_h}(\partial\Omega).\]
 
 This gives point $(iii)$ once we choose an appropriate $t$, as in the proof of Theorem \ref{density_smooth_teo}.
\end{proof}

\begin{rmk}\label{rmk_conv_every_cpt_subopen}
We remark that by Proposition $\ref{subcont_lem_approx}$ we have also
\begin{equation*}
\lim_{h\to\infty}P_s(E_h,\Omega')=P_s(E,\Omega'),\qquad\textrm{for every }\Omega'\subset\subset\Omega.
\end{equation*}
\end{rmk}

\end{subsection}

\end{section}

\begin{section}{Existence and compactness of $s$-minimal sets}

\begin{subsection}{Proof of Theorem $\ref{confront_min_teo}$}

\begin{proof}[Proof of Theorem $\ref{confront_min_teo}$]
$(i)\Longrightarrow(ii)\quad$ is obvious.

$(ii)\Longrightarrow(iii)\quad$ Let $\Omega'\subset\subset\Omega$ and let $F\subset\R^n$ be such that $F\setminus\Omega'=E\setminus\Omega'$.\\
Since $E\Delta F\subset\Omega'\subset\subset\Omega$, we have
\begin{equation*}
P_s(E,\Omega)\leq P_s(F,\Omega).
\end{equation*}
Then, since $F\setminus\Omega'=E\setminus\Omega'$, by Proposition $\ref{subopensets}$ we get
\begin{equation*}
P_s(E,\Omega')\leq P_s(F,\Omega').
\end{equation*}

$(iii)\Longrightarrow(i)\quad$ Let $E$ be locally $s$-minimal in $\Omega$.

First of all we prove that $P_s(E,\Omega)<\infty$.\\
Indeed, since $E$ is locally $s$-minimal in $\Omega$, in particular it is $s$-minimal in every $\Omega_r$, with $r\in(-r_0,0)$. Thus,
by minimality and $(\ref{unif_bound_lip_frac_per})$, we get
\begin{equation*}
P_s(E,\Omega_r)\leq P_s(E\setminus\Omega_r,\Omega_r)\leq P_s(\Omega_r)\leq M<\infty,
\end{equation*}
for every $r\in(-r_0,0)$.
Therefore 
by $(\ref{limit_sub_open})$ we obtain $P_s(E,\Omega)\leq M$.

Now let $F\subset\R^n$ be such that $F\setminus\Omega=E\setminus\Omega$. Take a sequence $\{r_k\}\subset(-r_0,0)$ such that $r_k\nearrow0$,
let $\Omega_k:=\Omega_{r_k}$, and define
\begin{equation*}
F_k:=(F\cap\Omega_k)\cup(E\setminus\Omega_k).
\end{equation*}
The local minimality of $E$ gives
\begin{equation*}
P_s(E,\Omega_k)\leq P_s(F_k,\Omega_k),\qquad\textrm{for every }k\in\mathbb N,
\end{equation*}
and by $(\ref{limit_sub_open})$ we know that
\begin{equation}
P_s(E,\Omega)=\lim_{k\to\infty}P_s(E,\Omega_k).
\end{equation}
Since $F_k=F$ outside $\Omega\setminus\Omega_k$, and $F_k=E$ in $\Omega\setminus\Omega_k$, we obtain
\begin{equation*}\begin{split}
P_s(F,\Omega_k)&-P_s(F_k,\Omega_k)=\Ll_s(F\cap\Omega_k,\Co F\cap(\Omega\setminus\Omega_k))\\
&
+\Ll_s(\Co F\cap\Omega_k,F\cap(\Omega\setminus\Omega_k))
-\Ll_s(F\cap\Omega_k,\Co E\cap(\Omega\setminus\Omega_k))\\
&\qquad\quad -\Ll_s(\Co F\cap\Omega_k,E\cap(\Omega\setminus\Omega_k)).
\end{split}
\end{equation*}
Notice that each of the four terms in the right hand side is less or equal than $\Ll_s(\Omega_k,\Omega\setminus\Omega_k)$.
Thus
\begin{equation*}
a_k:=|P_s(F,\Omega_k)-P_s(F_k,\Omega_k)|\leq4\,\Ll_s(\Omega_k,\Omega\setminus\Omega_k).
\end{equation*}
Notice that from point $(i)$ of $(\ref{uniform_bound_strips})$ we have $a_k\longrightarrow0$.

Now
\begin{equation*}
P_s(F,\Omega)+a_k\geq P_s(F,\Omega_k)+a_k\geq P_s(F_k,\Omega_k)\geq P_s(E,\Omega_k),
\end{equation*}
and hence, passing to the limit $k\to\infty$, we get
\begin{equation*}
P_s(F,\Omega)\geq P_s(E,\Omega).
\end{equation*}
Since $F$ was an arbitrary competitor for $E$, we see that $E$ is $s$-minimal in $\Omega$.
\end{proof}

\end{subsection}

\begin{subsection}{Compactness}


\begin{proof}[Proof of Theorem $\ref{minimal_comp}$]

Assume $F=E$ outside $\Omega$ and let
\begin{equation*}
F_k:=(F\cap\Omega)\cup(E_k\setminus\Omega).
\end{equation*}
Since $F_k=E_k$ outside $\Omega$ and $E_k$ is $s$-minimal in $\Omega$, we have
\begin{equation*}
P_s(F_k,\Omega)\geq P_s(E_k,\Omega).
\end{equation*}
On the other hand, since $F_k=F$ inside $\Omega$, we have
\begin{equation*}
|P_s(F_k,\Omega)-P_s(F,\Omega)|\leq\Ll_s(\Omega,(F_k\Delta F)\setminus\Omega)=
\Ll_s(\Omega,(E_k\Delta E)\setminus\Omega)=:b_k.
\end{equation*}
Thus
\begin{equation*}
P_s(F,\Omega)+b_k\geq P_s(F_k,\Omega)\geq P_s(E_k,\Omega).
\end{equation*}
If we prove that $b_k\longrightarrow0$, then by lower semicontinuty of the fractional perimeter
\begin{equation}\label{inequality1}
P_s(F,\Omega)\geq\limsup_{k\to\infty}P_s(E_k,\Omega)\geq\liminf_{k\to\infty}P_s(E_k,\Omega)\geq P_s(E,\Omega).
\end{equation}
This shows that $E$ is $s$-minimal in $\Omega$.
Moreover, $(\ref{conv_perimeter})$ follows from $(\ref{inequality1})$ by taking $F=E$.

We are left to show $b_k\longrightarrow0$.\\
Let $r_0$ be as in Proposition $\ref{bound_perimeter_unif}$ and let $R>r_0$. In the end we will let $R\longrightarrow\infty$.
Define
\begin{equation*}
a_k(r):=\Ha^{n-1}\big((E_k\Delta E)\cap\{\bar{d}_\Omega=r\})\big)
\end{equation*}
for every $r\in[0,r_0)$.\\
We split $b_k$ as the sum
\begin{equation*}\begin{split}
b_k&=\Ll_s\big(\Omega,(E_k\Delta E)\cap (\Omega_{r_0}\setminus\Omega)\big)
+\Ll_s\big(\Omega,(E_k\Delta E)\cap (\Omega_R\setminus\Omega_{r_0})\big)\\
&
\qquad\qquad\qquad+\Ll_s\big(\Omega,(E_k\Delta E)\setminus\Omega_R\big).
\end{split}\end{equation*}
Notice that if $x\in\Omega$ and $y\in(\Omega_R\setminus\Omega_{r_0})$, then 
$|x-y|\geq r_0$, and hence
\begin{equation*}\begin{split}
\Ll_s\big(\Omega,(E_k\Delta E)\cap (\Omega_R\setminus\Omega_{r_0})\big)&
=\int_{\Omega_R\setminus\Omega_{r_0}}\chi_{E_k\Delta E}(y)\,dy\int_\Omega\frac{1}{|x-y|^{n+s}}dx\\
&
\leq \frac{|\Omega|}{r_0^{n+s}}|(E_k\Delta E)\cap (\Omega_R\setminus\Omega_{r_0})|.
\end{split}\end{equation*}
Since $E_k\xrightarrow{loc}E$ and $\Omega_R\setminus\Omega_{r_0}$ is bounded, for every fixed $R$ we find
\begin{equation*}
\lim_{k\to\infty}\Ll_s\big(\Omega,(E_k\Delta E)\cap (\Omega_R\setminus\Omega_{r_0})\big)=0.
\end{equation*}
As for the last term, we have
\begin{equation*}
\Ll_s\big(\Omega,(E_k\Delta E)\setminus\Omega_R\big)\leq\Ll_s(\Omega,\Co\Omega_R)\leq
\int_\Omega dx\int_{\Co B_R(x)}\frac{dy}{|x-y|^{n+s}}=\frac{n\omega_n}{s\,R^s}|\Omega|.
\end{equation*}
We are left to estimate the first term. By using the coarea formula, we obtain
\begin{equation*}\begin{split}
\Ll_s\big(\Omega,(E_k&\Delta E)\cap (\Omega_{r_0}\setminus\Omega)\big)\\
&
=\int_0^{r_0}\Big(\int_{\{\bar{d}_\Omega=r\}}\chi_{E_k\Delta E}(y)\Big(\int_\Omega\frac{dx}{|x-y|^{n+s}}\Big)d\Ha^{n-1}(y)\Big)dr\\
&
\leq
\int_0^{r_0}\Big(\int_{\{\bar{d}_\Omega=r\}}\chi_{E_k\Delta E}(y)\Big(\int_{\Co B_r(y)}\frac{dx}{|x-y|^{n+s}}\Big)d\Ha^{n-1}(y)\Big)dr\\
&
=\frac{n\omega_n}{s}\int_0^{r_0}\frac{a_k(r)}{r^s}\,dr.
\end{split}
\end{equation*}
Notice that
\begin{equation*}
\int_0^{r_0}a_k(r)\,dr=|(E_k\Delta E)\cap(\Omega_{r_0}\setminus\Omega)|\xrightarrow{k\to\infty}0,
\end{equation*}
so that
\begin{equation*}
a_k(r)\xrightarrow{k\to\infty}0\qquad\textrm{for a.e. }r\in[0,r_0).
\end{equation*}
Moreover, exploiting $(\ref{bound_perimeter_unif_eq})$ we get
\begin{equation*}
\int_0^{r_0}\frac{a_k(r)}{r^s}\,dr\leq M\int_0^{r_0}\frac{1}{r^s}\,dr=\frac{M}{1-s}r_0^{1-s},
\end{equation*}
and hence, by dominated convergence, we obtain
\begin{equation*}
\lim_{k\to\infty}\int_0^{r_0}\frac{a_k(r)}{r^s}\,dr=0.
\end{equation*}
Therefore
\begin{equation*}
\limsup_{k\to\infty}b_k\leq\frac{n\omega_n}{s}|\Omega|\,R^{-s}.
\end{equation*}
Letting $R\longrightarrow\infty$, we obtain $b_k\longrightarrow0$, concluding the proof.
\end{proof}

\begin{proof}[Proof of Corollary $\ref{local_minima_comp}$]
Let the sets $\Omega_k\subset\subset\Omega$ be as in Corollary
$\ref{regular_approx_open_sets_coroll}$.
By Theorem \ref{minimal_comp} we see that $E$ is $s$-minimal in each $\Omega_k$. Moreover $(\ref{conv_perimeter})$ gives
\begin{equation*}
P_s(E,\Omega_k)=\lim_{h\to\infty}P_s(E_h,\Omega_k),
\end{equation*}
for every $k$. Now if $\Omega'\subset\subset\Omega$, then $\Omega'\subset\Omega_k$ for some $k$.
Thus $E$ is $s$-minimal in $\Omega'$
and we obtain $(\ref{conv_perimeter_locally})$ by Proposition $\ref{subcont_lem_approx}$.
\end{proof}

\end{subsection}

\begin{subsection}{Existence of (locally) $s$-minimal sets}



\begin{proof}[Proof of Theorem \ref{glob_min_exist}]
The ``only if'' part is trivial. Now suppose there exists a competitor for $E_0$ with finite $s$-perimeter in $\Omega$. Then
\begin{equation*}
\inf\{P_s(E,\Omega)\,|\,E\setminus\Omega=E_0\setminus\Omega\}<\infty
\end{equation*}
and we can find a minimizing sequence, that is $\{E_h\}$ with $E_h\setminus\Omega=E_0\setminus\Omega$ and
\begin{equation*}
\lim_{h\to\infty}P_s(E_h,\Omega)=\inf\{P_s(E,\Omega)\,|\,E\setminus\Omega=E_0\setminus\Omega\}.
\end{equation*}
Let $\Omega'\subset\subset\Omega$. Since, for every $h\in\mathbb N$ we have
\begin{equation*}
P_s(E_h,\Omega')\leq P_s(E_h,\Omega)\leq M<\infty,
\end{equation*}
we can use Proposition $\ref{compact_prop}$ to find a set $E'\subset\Omega$ such that
\begin{equation*}
E_h\cap\Omega\xrightarrow{loc}E'
\end{equation*}
(up to subsequence). Since $E_h\setminus\Omega=E_0\setminus\Omega$ for every $h$, if we set $E:=E'\cup(E_0\setminus\Omega)$, then
\begin{equation*}
E_h\xrightarrow{loc}E.
\end{equation*}
The semicontinuity of the fractional perimeter concludes the proof.
\end{proof}

\begin{rmk}\label{rmk_crs_existence}
In particular, if $\Omega$ is a bounded open set with Lipschitz boundary, then (as already proved in \cite{CRS}) we can always find an $s$-minimal set for every $s\in(0,1)$,
no matter what the external data $E_0\setminus\Omega$ is. Indeed in this case
\begin{equation*}
P_s(E_0\setminus\Omega,\Omega)\leq P_s(\Omega)<\infty.
\end{equation*}
Actually, in order to have the existence of $s$-minimal sets for some fixed $s\in(0,1)$, the open set $\Omega$ need not be bounded nor have a regular boundary. It is enough to have
\[P_s(\Omega)<\infty.\]
Then $E_0\setminus\Omega$ has finite $s$-perimeter in $\Omega$ and we can apply Theorem
\ref{glob_min_exist}.
\end{rmk}


\begin{proof}[Proof of Corollary $\ref{loc_min_set_cor}$]
Let the sets $\Omega_k$ be as in Corollary
$\ref{regular_approx_open_sets_coroll}$.\\
From Theorem \ref{glob_min_exist} and Remark \ref{rmk_crs_existence} we know that for every $k$ we can find a set $E_k$ which is $s$-minimal in $\Omega_k$
and such that $E_k\setminus\Omega_k=E_0\setminus\Omega_k$.\\
Notice that, since the sequence $\Omega_k$ is increasing, the set $E_h$ is $s$-minimal in $\Omega_k$ for every $h\geq k$.\\
This gives us a sequence $\{E_h\}$ satisfying the hypothesis of Proposition $\ref{compact_prop}$ (see Remark $\ref{min_app_seq_rmk}$),
and hence (up to a subsequence)
\begin{equation*}
E_h\cap\Omega\xrightarrow{loc}F,
\end{equation*}
for some $F\subset\Omega$. Since $E_h\setminus\Omega=E_0\setminus\Omega$ for every $h$,
if we set $E:=F\cup(E_0\setminus\Omega)$, we obtain
\begin{equation*}
E_h\xrightarrow{loc}E.
\end{equation*}
Theorem $\ref{minimal_comp}$ guarantees that $E$ is $s$-minimal in every $\Omega_k$
and hence also locally $s$-minimal in $\Omega$. Indeed, if $\Omega'\subset\subset\Omega$,
then for some $k$ big enough we have $\Omega'\subset\Omega_k$. Now, since $E$ is $s$-minimal in $\Omega_k$, it is
$s$-minimal also in $\Omega'$.
\end{proof}


\end{subsection}

\begin{subsection}{Locally $s$-minimal sets in cylinders}

Given a bounded open set $\Omega\subset\R^n$, 
we consider
the cylinders
\begin{equation*}
\Omega^k:=\Omega\times(-k,k),\qquad\Omega^\infty:=\Omega\times\R.
\end{equation*}
We recall that, given any set $E_0\subset\R^{n+1}$, by Corollary $\ref{loc_min_set_cor}$ we can find a set $E\subset\R^{n+1}$ which is locally $s$-minimal in
$\Omega^\infty$
 and such that $E\setminus\Omega^\infty=E_0\setminus\Omega^\infty$.

\begin{rmk}\label{rmk_from_compact_to_any_subset}
Actually, if $\Omega$ has Lipschitz boundary then $E$ is $s$-minimal in every cylinder $\mathcal O=\Omega\times(a,b)$ of finite height (notice that $\mathcal O$
is not compactly contained in $\Omega^\infty$).
Indeed, $\mathcal O$ is a bounded open set with Lipschitz boundary and $E$ is locally $s$-minimal in $\mathcal O$.
Thus, by Theorem $\ref{confront_min_teo}$, $E$ is $s$-minimal in $\mathcal O$.\\
As a consequence, $E$ is $s$-minimal in every bounded open subset $\Omega'\subset\Omega$.
\end{rmk}

We are going to consider as exterior data the subgraph
\begin{equation*}
E_0=\Sg(v):=\{(x,t)\in\R^{n+1}\,|\,t<v(x)\},
\end{equation*}
of a function $v:\R^n\longrightarrow\R$, which is locally bounded, i.e.
\begin{equation}\label{locally_bounded_assumption}
M_r:=\sup_{|x|\leq r}|v(x)|<\infty,\qquad\textrm{for every }r>0.
\end{equation}

The following result is an immediate consequence of (the proof of) Lemma 3.3 of \cite{graph}.

\begin{lem}\label{bded_cyl_prop}
Let $\Omega\subset\R^n$ be a bounded open set with $C^{1,1}$ boundary and let $v:\R^n\longrightarrow\R$
be locally bounded. There exists a constant $M=M(n,s,\Omega,v)>0$
such that if $E\subset\R^{n+1}$ is locally $s$-minimal in $\Omega^\infty$,
with $E\setminus\Omega^\infty=\Sg(v)\setminus\Omega^\infty$,
then
\begin{equation*}
\Omega\times(-\infty,-M]\subset E\cap\Omega^\infty\subset\Omega\times(-\infty,M].
\end{equation*}
As a consequence
\begin{equation}\label{formu_trivia2}
E\setminus\big(\Omega\times[-M,M]\big)=\Sg(v)\setminus\big(\Omega\times[-M,M]\big).
\end{equation}
\end{lem}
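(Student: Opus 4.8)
The plan is to exhibit a ``flat cap'' barrier with the correct exterior datum and compare $E$ against it inside each truncated cylinder $\Omega^k=\Omega\times(-k,k)$. First I reduce to the upper inclusion: since $P_s$ is invariant under $E\mapsto\Co E$ and under the isometry $\iota(x,t):=(x,-t)$, the set $\iota(\Co E)$ is again locally $s$-minimal in $\Omega^\infty$, now with exterior datum $\Sg(-v)\setminus\Omega^\infty$ (and $-v$ is again locally bounded); an upper inclusion for $\iota(\Co E)$ yields the lower inclusion for $E$, and one takes $M$ to be the larger of the two constants. The consequence \eqref{formu_trivia2} is then automatic, because once $M\geq M_0:=\sup_{B_{R_0}}|v|$ (with $\bar\Omega\subset\subset B_{R_0}$) one has $\Sg(v)\cap\Omega^\infty\subset\Omega\times(-\infty,M_0]$. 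So from now on fix such an $R_0$ and $M_0$, and aim to prove $E\cap\Omega^\infty\subset\Omega\times(-\infty,M]$ for a suitable $M=M(n,s,\Omega,v)\geq M_0+1$.

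\textbf{The barrier.} For $M\geq M_0+1$ set
\[
B^+:=\big(\Omega\times(-\infty,M]\big)\cup\big(\Sg(v)\setminus\Omega^\infty\big),
\]
so that $B^+\setminus\Omega^\infty=E\setminus\Omega^\infty$ and $\partial B^+\cap\Omega^\infty=\Omega\times\{M\}$. The key point is that $B^+$ is a \emph{strict supersolution} in $\Omega^\infty$, i.e. $P_s(G,U)>P_s(B^+,U)$ for every $G\supsetneq B^+$ with $G\setminus U=B^+\setminus U$ and every cylinder $U=\Omega^k$ with $k>M$. This follows by comparing $B^+$ with the half--space $H_M:=\{x_{n+1}<M\}$: at $p=(x_0,M)$ with $x_0\in\Omega$ the fractional mean curvature of $B^+$ equals
\[
H_s[B^+](p)=2\int_{R_-}\frac{dy}{|p-y|^{n+1+s}}-2\int_{R_+}\frac{dy}{|p-y|^{n+1+s}},
\]
where $R_-:=\{(y',y_{n+1}):y'\in\Co\Omega,\ v(y')\leq y_{n+1}<M\}$ and $R_+:=\{(y',y_{n+1}):y'\in\Co\Omega,\ M\leq y_{n+1}<v(y')\}$, using $H_s[H_M]\equiv0$ on $\{x_{n+1}=M\}$. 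Integrating over $(\Co\Omega\cap B_{R_0})\times(M-1,M)\subset R_-$, which has positive measure and lies within bounded horizontal distance of $x_0$, the first integral is bounded below by a constant $c_*=c_*(n,s,\Omega)>0$, uniformly in $x_0\in\Omega$ and in $M\geq M_0+1$. Since $v$ is only locally bounded, $R_+=\emptyset$ unless $v>M$ somewhere, and then $R_+\subset\{|y'|\geq\rho_M\}\times[M,\infty)$ with $\rho_M\to\infty$ as $M\to\infty$, so the second integral is at most $C_{n,s}(\rho_M-R_0)^{-s}\to0$. Hence there is $M=M(n,s,\Omega,v)\geq M_0+1$ with $H_s[B^+]\geq c_*/2>0$ on $\Omega\times\{M\}$, and the strict supersolution property follows from the standard comparison machinery for the $s$-perimeter (see \cite{CRS}, and the analogous barrier computations in \cite{graph}).

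\textbf{Conclusion via comparison.} Suppose $E\cap\Omega^\infty\not\subset\Omega\times(-\infty,M]$. By Remark \ref{gmt_assumption} we may assume $E$ open, so $|E\cap(\Omega\times(M,t_*))|>0$ for some finite $t_*>M$. Fix $k>t_*$ with $k>-M$; by Remark \ref{rmk_from_compact_to_any_subset} (here $\Omega$ is $C^{1,1}$, hence Lipschitz) $E$ is $s$-minimal in $\Omega^k$, so in particular $P_s(E,\Omega^k)<\infty$. Put
\[
B_k^+:=B^+\cup\big(\Omega\times[k,\infty)\big),\qquad F:=E\cap B_k^+=E\setminus\big(\Omega\times(M,k)\big),
\]
so that $F=E$ outside $\Omega^k$ and, for $k$ large, $B_k^+$ is still a strict supersolution in $\Omega^k$ (the extra piece $\Omega\times[k,\infty)$ sits at distance $\geq k-M$ from $\Omega\times\{M\}$ and perturbs $H_s$ there only by $O((k-M)^{-s})$). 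Minimality gives $P_s(E,\Omega^k)\leq P_s(F,\Omega^k)$, while the submodularity of the $s$-perimeter applied to $E$ and $B_k^+$ gives
\[
P_s(F,\Omega^k)+P_s(E\cup B_k^+,\Omega^k)\leq P_s(E,\Omega^k)+P_s(B_k^+,\Omega^k),
\]
and combining the two yields $P_s(E\cup B_k^+,\Omega^k)\leq P_s(B_k^+,\Omega^k)$. Since $E\cup B_k^+\supseteq B_k^+$ with equality outside $\Omega^k$, the strict supersolution property forces $E\cup B_k^+=B_k^+$ in $\Omega^k$, i.e. $E\cap(\Omega\times(M,k))=\emptyset$ — contradicting $t_*<k$. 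Therefore $E\cap\Omega^\infty\subset\Omega\times(-\infty,M]$, and the lower inclusion follows from the reflection in the first step.

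\textbf{Main obstacle.} The delicate step is the barrier estimate: because $v$ is merely locally bounded, $B^+$ is genuinely not a half--space, and one must quantify simultaneously that the ``missing wedge'' $R_-$ adjacent to $\partial\Omega$ contributes a fixed positive amount to the fractional mean curvature over $\Omega$, while the ``extra wedge'' $R_+$, pushed out to horizontal distance $\rho_M\to\infty$, contributes a vanishing amount. This is exactly the content of (the proof of) Lemma 3.3 of \cite{graph}. A secondary, purely bookkeeping point is that one cannot assume a priori that $E$ is bounded above over $\Omega$, which is why the comparison is run in the truncated cylinders $\Omega^k$ with the auxiliary barrier $B_k^+$ rather than with $B^+$ directly.
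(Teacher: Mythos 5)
Your proof is correct and follows essentially the same route as the paper: the paper's own proof simply notes (via Theorem \ref{confront_min_teo}) that $E$ is $s$-minimal in every finite cylinder $\Omega^k$ and then cites Lemma 3.3 of \cite{graph} for the upper inclusion, obtaining the lower one by running the same argument on the complement, exactly as you do via reflection. Your write-up merely unpacks the flat-cap barrier computation and the comparison in truncated cylinders that constitute the proof of the cited lemma.
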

\begin{proof}
By Remark \ref{rmk_from_compact_to_any_subset}, the set $E$ is $s$-minimal in $\Omega^\infty$ in the sense considered
in \cite{graph}.
Lemma 3.3 of \cite{graph} then guarantees that
\[E\cap\Omega^\infty\subset\Omega\times(-\infty,M].\]
Moreover, the same argument used in the proof shows also that
\[\Co E\cap\Omega^\infty\subset\Omega\times[-M,\infty),\]
(up to considering a bigger $M$).

Since $M>M_{R_0}$, where $R_0$ is such that $\Omega\subset\subset B_{R_0}$,
we get \eqref{formu_trivia2}, concluding the proof.
\end{proof}

Roughly speaking, Lemma \ref{bded_cyl_prop} gives an a priori bound on the variation of
$\partial E$ in the ``vertical'' direction.
In particular, from \eqref{formu_trivia2} we see that it is enough to look for a locally $s$-minimal set among sets which coincide with
$\Sg(v)$ out of $\Omega\times[-M,M]$.

As a consequence, we can prove that a set is locally $s$-minimal in $\Omega^\infty$
if and only if it is $s$-minimal in $\Omega\times[-M,M]$.

\begin{prop}\label{bded_cyl_coroll}
Let $\Omega\subset\R^n$ be a bounded open set with $C^{1,1}$ boundary and let $v:\R^n\longrightarrow\R$ be locally bounded. Let $M$ be as in Lemma \ref{bded_cyl_prop} and
let $k_0$ be the smallest integer $k_0>M$.
Let
$F\subset\R^{n+1}$ be $s$-minimal in $\Omega^{k_0}$,
with respect to the exterior data
\begin{equation}\label{ext_data_eq_cyl}
F\setminus\Omega^{k_0}=\Sg(v)\setminus\Omega^{k_0}.
\end{equation}
Then $F$ is $s$-minimal in $\Omega^k$ for every $k\geq k_0$,
hence is locally $s$-minimal in $\Omega^\infty$.
\end{prop}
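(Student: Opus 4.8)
The plan is to show, for every real $k\geq k_0$, that $F$ is $s$-minimal in $\Omega^k$; once this is established, local $s$-minimality in $\Omega^\infty$ is routine: any $\Omega'\subset\subset\Omega^\infty$ has compact closure contained in $\Omega\times\R$, hence $\Omega'\subset\subset\Omega^k$ for $k$ large, and on the bounded open set with Lipschitz boundary $\Omega^k$ Theorem~\ref{confront_min_teo} turns $s$-minimality into local $s$-minimality, so $F$ is $s$-minimal in $\Omega'$. To obtain $s$-minimality of $F$ in $\Omega^k$ I would compare $F$ with a genuine $s$-minimizer $G$ in $\Omega^k$ carrying the exterior datum $\Sg(v)\setminus\Omega^k$: such a $G$ exists because $\Omega^k$ is a bounded open set with Lipschitz boundary, so $P_s(\Omega^k)<\infty$ and Theorem~\ref{glob_min_exist} (as in \cite{CRS}) applies; it then suffices to prove $P_s(F,\Omega^k)\leq P_s(G,\Omega^k)$.

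First I would read off the exterior datum. By \eqref{ext_data_eq_cyl} the set $F$ coincides with $\Sg(v)$ on all of $\Co\Omega^{k_0}$; and since $k_0>M\geq M_{R_0}=\sup_{|x|\leq R_0}|v(x)|$ with $\Omega\subset\subset B_{R_0}$, every $x\in\Omega$ satisfies $-k_0<v(x)<k_0$, so on the annulus $\Omega^k\setminus\overline{\Omega^{k_0}}$ the set $F$ equals the lower block $A^-:=\Omega\times(-k,-k_0)$, while $\Co F$ there equals $A^+:=\Omega\times(k_0,k)$. Splitting $P_s(F,\Omega^k)$ over $\Omega^{k_0}$ and the annulus by Proposition~\ref{subopensets}, the two interaction terms are bounded by $P_s(A^-)$ and $P_s(A^+)$ respectively (finite, as $A^\pm$ are bounded with Lipschitz boundary), giving $P_s(F,\Omega^k)\leq P_s(F,\Omega^{k_0})+P_s(A^-)+P_s(A^+)<\infty$; in particular $F$ is itself an admissible competitor in $\Omega^k$.

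The core of the argument is to show that $G$ agrees with $F$ outside $\Omega^{k_0}$. Here I would argue exactly as in the proof of Lemma~\ref{bded_cyl_prop} (that is, of Lemma~3.3 of \cite{graph}): since $G$ is $s$-minimal in the bounded Lipschitz cylinder $\Omega^k$ with exterior datum the subgraph $\Sg(v)$, and $k\geq k_0>M$, the barrier/comparison argument there applies and yields $\Omega\times(-\infty,-M]\subset G\cap\Omega^k\subset\Omega\times(-\infty,M]$. Consequently $G=\Sg(v)$ outside $\Omega\times[-M,M]$, so by the previous paragraph $G=F$ on $\Co\Omega^{k_0}$, i.e.\ $G\Delta F\subset\Omega^{k_0}$ and $G\setminus\Omega^{k_0}=F\setminus\Omega^{k_0}$. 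Thus $G$ is an admissible competitor for $F$ in $\Omega^{k_0}$, and the hypothesis that $F$ is $s$-minimal in $\Omega^{k_0}$ gives $P_s(F,\Omega^{k_0})\leq P_s(G,\Omega^{k_0})$. Since $F$ and $G$ both have finite $s$-perimeter in $\Omega^k$ and $F\Delta G\subset\Omega^{k_0}\subset\Omega^k$, part~$(ii)$ of Proposition~\ref{subopensets} yields $P_s(F,\Omega^k)-P_s(G,\Omega^k)=P_s(F,\Omega^{k_0})-P_s(G,\Omega^{k_0})\leq0$, hence $P_s(F,\Omega^k)\leq P_s(G,\Omega^k)$; as $G$ was an $s$-minimizer in $\Omega^k$ with the same exterior datum as $F$, the set $F$ is $s$-minimal in $\Omega^k$, and, since the argument never used that $k$ is an integer, this holds for every real $k\geq k_0$.

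The one step that is not pure bookkeeping with Proposition~\ref{subopensets} is the a priori vertical confinement of the auxiliary minimizer $G$; I would import it directly from \cite{graph}, just as Lemma~\ref{bded_cyl_prop} does, the only point to check being that the comparison there is legitimate for a set $s$-minimal in the finite cylinder $\Omega^k$ rather than in $\Omega^\infty$, which is the case precisely because $k>M$, so that the competitors involved modify $G$ only inside $\Omega^k$.
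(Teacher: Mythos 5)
Your overall strategy is different from the paper's and mostly sound in its bookkeeping, but it hinges on one claim that you assert rather than prove, and that claim is exactly the step the paper is engineered to avoid. You introduce, for each $k\geq k_0$, an auxiliary minimizer $G$ of $P_s(\cdot,\Omega^k)$ with exterior datum $\Sg(v)\setminus\Omega^k$, and you need $G=\Sg(v)$ outside $\Omega\times[-M,M]$ in order to make $G$ an admissible competitor for $F$ in $\Omega^{k_0}$. But Lemma \ref{bded_cyl_prop} (i.e.\ Lemma 3.3 of \cite{graph}) is stated for sets that are \emph{locally $s$-minimal in $\Omega^\infty$}, equivalently $s$-minimal in $\Omega\times(a,b)$ for \emph{every} $a<b$; your $G$ is only $s$-minimal in the single cylinder $\Omega^{k}$, with its behaviour in $\Omega\times(\R\setminus(-k,k))$ prescribed rather than minimal. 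So the lemma does not apply as stated, and whether the sliding/comparison argument of \cite{graph} survives when all competitors must be confined to $\Omega^k$ (in particular near the top and bottom of the cylinder, where the construction may want to modify the set across the levels $\pm k$, and with a constant $M$ independent of $k$) is precisely what would have to be re-proved. You flag this yourself, but flagging it does not close it; as written, the central geometric input of your proof is missing.

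The paper sidesteps this entirely: instead of a minimizer of the truncated problem, it takes as comparison set a set $E$ that is \emph{locally $s$-minimal in $\Omega^\infty$} with datum $\Sg(v)\setminus\Omega^\infty$ (such an $E$ exists by Corollary \ref{loc_min_set_cor}), to which Lemma \ref{bded_cyl_prop} applies verbatim, giving $E\setminus\Omega^{k_0}=\Sg(v)\setminus\Omega^{k_0}=F\setminus\Omega^{k_0}$. By Remark \ref{rmk_from_compact_to_any_subset}, $E$ is $s$-minimal in every $\Omega^k$, so the two minimality inequalities in $\Omega^{k_0}$ force $P_s(F,\Omega^{k_0})=P_s(E,\Omega^{k_0})$, and the cancellation via Proposition \ref{subopensets}$(ii)$ (the same computation as your last step, with $E$ in place of $G$) yields $P_s(F,\Omega^k)=P_s(E,\Omega^k)$, whence $F$ inherits minimality in $\Omega^k$ from $E$. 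If you want to keep your architecture, the fix is simply to replace your $G$ by this $E$: everything else in your argument, including the finiteness estimate via the blocks $A^\pm$ and the passage to local minimality in $\Omega^\infty$ through Theorem \ref{confront_min_teo}, is correct.
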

\begin{proof}
Let $E\subset\R^{n+1}$ be locally $s$-minimal in $\Omega^\infty$, with respect to the exterior data 
\[E\setminus\Omega^\infty=\Sg(v)\setminus\Omega^\infty.\]
Recall that by Remark \ref{rmk_from_compact_to_any_subset} the set $E$ is $s$-minimal in $\Omega^k$
for every $k$. In particular
\[P_s(E,\Omega^k)<\infty\qquad\forall\,k\in\mathbb N.\]

To prove the Proposition, it is enough to show that
\begin{equation}\label{bded_to_unbded_cyl}
P_s(F,\Omega^k)=P_s(E,\Omega^k),\qquad\textrm{for every }k\geq k_0.
\end{equation}
Indeed, notice that by \eqref{ext_data_eq_cyl} and \eqref{formu_trivia2} we have
\begin{equation}\label{triv_fin1}
F\setminus\Omega^{k_0}=\Sg(v)\setminus\Omega^{k_0}=E\setminus\Omega^{k_0},
\end{equation}
hence, clearly,
\[F\setminus\Omega^k=E\setminus\Omega^k,\qquad\forall\,k\geq k_0.\]
Then, since $E$ is $s$-minimal in $\Omega^k$, from \eqref{bded_to_unbded_cyl}
we conclude that also $F$ is $s$-minimal in $\Omega^k$, for every $k\geq k_0$. In turn, this implies that $F$
is locally $s$-minimal in $\Omega^\infty$.

Exploiting
Proposition $\ref{subopensets}$, by \eqref{triv_fin1} we obtain that for every $k\geq k_0$
\begin{equation}\label{fin_triv2}
P_s(F,\Omega^k)=P_s(F,\Omega^{k_0})+c_k,\qquad P_s(E,\Omega^k)=P_s(E,\Omega^{k_0})+c_k,
\end{equation}
where
\begin{equation*}\begin{split}
c_k=\Ll_s(\Sg(v)&\cap(\Omega^k\setminus\Omega^{k_0}),\Co\Sg(v)\setminus\Omega^{k_0})\\
&
+
\Ll_s(\Sg(v)\setminus\Omega^{k_0},\Co\Sg(v)\cap(\Omega^k\setminus\Omega^{k_0})),
\end{split}
\end{equation*}
which is finite and does not depend on $E$ nor $F$. To see that $c_k$ is finite, simply notice that
\begin{equation*}
c_k\leq P_s(E,\Omega^k)<\infty.
\end{equation*}
Now, by \eqref{triv_fin1} and the minimality of $F$ we have
\begin{equation*}
P_s(F,\Omega^{k_0})\leq P_s(E,\Omega^{k_0}).
\end{equation*}
On the other hand, since also the set $E$ is $s$-minimal in $\Omega^{k_0}$,
again by \eqref{triv_fin1} we get
\begin{equation*}
P_s(E,\Omega^{k_0})\leq P_s(F,\Omega^{k_0}).
\end{equation*}
This and $(\ref{fin_triv2})$ give
\begin{equation*}
P_s(F,\Omega^k)=P_s(F,\Omega^{k_0})+c_k=P_s(E,\Omega^k),
\end{equation*}
proving \eqref{bded_to_unbded_cyl} and concluding the proof.
\end{proof}

It is now natural to wonder whether the set $F$ is actually $s$-minimal in $\Omega^\infty$. The answer, in general, is no.
Indeed, Theorem $\ref{bound_unbound_per_cyl_prop}$ shows
that in general we cannot hope to find an $s$-minimal set in $\Omega^\infty$.

\begin{proof}[Proof of Theorem $\ref{bound_unbound_per_cyl_prop}$]

Notice that by $(\ref{bound_hp_forml_subgraph})$ we have
\begin{equation*}\begin{split}
&E\cap(\Omega^\infty\setminus\Omega^{k+1})=\Omega\times(-\infty,-k-1),\\
&
\Co E\cap(\Omega^\infty\setminus\Omega^{k+1})=\Omega\times(k+1,\infty),
\end{split}\end{equation*}
and
\begin{equation*}
E\cap\Omega^{k+1}\subset\Omega\times(-k-1,k),\qquad\qquad\Co E\cap\Omega^{k+1}
\subset\Omega\times(-k,k+1).
\end{equation*}
Thus
\begin{equation*}\begin{split}
P_s^L(E,\Omega^\infty)&=
P_s^L(E,\Omega^{k+1})+\Ll_s(E\cap(\Omega^\infty\setminus\Omega^{k+1}),\Co E\cap\Omega^{k+1})\\
&\qquad+\Ll_s(\Co E\cap(\Omega^\infty\setminus\Omega^{k+1}),E\cap\Omega^{k+1})
+P_s^L(E,\Omega^\infty\setminus\Omega^{k+1})\\
&
\leq P_s^L(E,\Omega^{k+1})+2\Ll_s(\Omega\times(-\infty,-k-1),\Omega\times(-k,k+1))\\
&\qquad+\Ll_s(\Omega\times(-\infty,-k-1),\Omega\times(k+1,\infty)).
\end{split}
\end{equation*}
Since $d(\Omega\times(-\infty,-k-1),\Omega\times(-k,k+1))=1$, we get
\begin{equation*}\begin{split}
\Ll_s(\Omega\times(-\infty&,-k-1),\Omega\times(-k,k+1))\\
&
\leq\int_{\Omega\times(-k,k+1)}\Big(\int_{\Co B_1(X)}\frac{dY}{|X-Y|^{n+1+s}}\Big)\,dX\\
&
=\frac{(n+1)\omega_{n+1}}{s}(2k+1)|\Omega|.
\end{split}\end{equation*}
As for the last term, since $n+1\geq2$, we have
\begin{equation*}\begin{split}
\Ll_s(\Omega\times(-\infty&,-k-1),\Omega\times(k+1,\infty))\\
&
=\int_\Omega\int_\Omega\Big(\int_{-\infty}^{-k-1}\int_{k+1}^\infty\frac{dt\,d\tau}{(|x-y|^2+(t-\tau)^2)^\frac{n+1+s}{2}}
\Big)dx\,dy\\
&
\leq|\Omega|^2\int_{-\infty}^{-k-1}\Big(\int_{k+1}^\infty\frac{dt}{(t-\tau)^{n+1+s}}\Big)d\tau\\
&
=\frac{|\Omega|^2}{n+s}\int_{-\infty}^{-k-1}\frac{d\tau}{(k+1-\tau)^{n+s}}\\
&
=\frac{|\Omega|^2}{(n+s)(n-1+s)}\,\frac{1}{(2k+2)^{n-1+s}}.
\end{split}\end{equation*}

This shows that $P_s^L(E,\Omega^\infty)<\infty$.\\
Now suppose that $E\subset\R^{n+1}$ satisfies $(\ref{bound_hp_forml_subgraph2})$. Then
\begin{equation*}
P_s^{NL}(E,\Omega^\infty)\geq2\Ll_s(\Omega\times(-\infty,-k),\Co\Omega\times(k,\infty)).
\end{equation*}
Since $\Omega$ is bounded, we can take $R>0$ big enough such that $\Omega\subset\subset B_R$. For every
$T>T_0:=\max\{k,R\}$ we have
\begin{equation*}
\Omega\times(-\infty,-T)\subset\Omega\times(-\infty,-k)\quad\textrm{and}\quad (B_T\setminus B_R)\times(T,\infty)
\subset\Co\Omega\times(k,\infty).
\end{equation*}
Thus for every $T>T_0$
\begin{equation*}\begin{split}
\Ll_s(\Omega&\times(-\infty,-k),\Co\Omega\times(k,\infty))
\geq\Ll_s(\Omega\times(-\infty,-T),(B_T\setminus B_R)\times(T,\infty))\\
&
=\int_\Omega dx\int_{B_T\setminus B_R}dy\int_{-\infty}^{-T}dt
\int_T^\infty\frac{d\tau}{(|x-y|^2+(\tau-t)^2)^\frac{n+1+s}{2}}=:a_T.
\end{split}\end{equation*}
Notice that for every $x\in\Omega,\,y\in B_T\setminus B_R,\,t\in(-\infty,-T)$ and $\tau\in(T,\infty)$, we have
\begin{equation*}
|x-y|\leq|x|+|y|\leq R+T\leq 2T\leq \tau-t,
\end{equation*}
and hence
\begin{equation*}\begin{split}
a_T&\geq\frac{1}{2^\frac{n+1+s}{2}}
\int_\Omega dx\int_{B_T\setminus B_R}dy\int_{-\infty}^{-T}dt
\int_T^\infty\frac{d\tau}{(\tau-t)^{n+1+s}}\\
&
=\frac{|\Omega|}{2^\frac{n+1+s}{2}(n+s)(n-1+s)}\frac{|B_T\setminus B_R|}{(2T)^{n-1+s}}.
\end{split}\end{equation*}
Since $|B_T\setminus B_R|\sim T^n$ as $T\to\infty$, we get $a_T\longrightarrow\infty$. Therefore,
since
\begin{equation*}
P^{NL}_s(E,\Omega^\infty)\geq 2a_T\qquad\textrm{for every }T>T_0,
\end{equation*}
we obtain $P^{NL}_s(E,\Omega^\infty)=\infty$.

To conclude, let $\Omega$ be bounded, with $C^{1,1}$ boundary, and let $v\in L^\infty(\R^n)$. Suppose that there exists a set $E\subset\R^{n+1}$ $s$-minimal in $\Omega^\infty$
with respect to the exterior data $E\setminus\Omega^\infty=\Sg(v)\setminus\Omega^\infty$.\\
Then, thanks to Lemma $\ref{bded_cyl_prop}$, we can find $k$ big enough such that $E$ satisfies
$(\ref{bound_hp_forml_subgraph2})$. 
Since this implies $P_s(E,\Omega^\infty)=\infty$, we reach a contradiction concluding the proof.
\end{proof}

\begin{coroll}\label{non_well_def_frac_area}
In particular
\begin{equation}\label{forml_eq1}
u\in BV_{loc}(\R^n)\cap L^\infty_{loc}(\R^n)\quad\Longrightarrow\quad P_s^L(\Sg(u),\Omega^\infty)<\infty,
\end{equation}
and
\begin{equation}\label{forml_eq2}
u\in
L^\infty(\R^n)\quad\Longrightarrow\quad P_s^{NL}(\Sg(u),\Omega^\infty)=\infty,
\end{equation}
for every bounded open set $\Omega\subset\R^n$.

Furthermore, if $|u|\leq M$ in $\Omega$ and there exists $\Sigma\subset\mathbb S^{n-1}$ with $\Ha^{n-1}(\Sigma)>0$
such that either
\begin{equation*}
u(r\omega)\leq M\quad\textrm{or}\quad u(r\omega)\geq-M\qquad\textrm{for every }\omega\in\Sigma\quad\textrm{and}\quad r\geq r_0,
\end{equation*}
then $P^{NL}_s(\Sg(u),\Omega^\infty)=\infty$.

\begin{proof}
Both $(\ref{forml_eq1})$ and $(\ref{forml_eq2})$ are immediate from Theorem \ref{bound_unbound_per_cyl_prop}, so
we only need to prove the last claim.

Since $\Omega$ is bounded, we can find $R>0$ such that $\Omega\subset\subset B_R$.\\
For every $T>T_0:=\max\{M,R,r_0\}$ define
\begin{equation*}
\mathcal S(T):=\{x=r\omega\in\R^n\,|\,r\in(T_0,T),\,\omega\in\Sigma\}.
\end{equation*}
Notice that $\mathcal S(T)\subset B_T$ and
\begin{equation*}\begin{split}
|\mathcal S(T)|&
=\int_{T_0}^T\Big(\int_{\partial B_r}\chi_{\mathcal S(T)}\,d\Ha^{n-1}\Big)dr
=\int_{T_0}^T\Ha^{n-1}(r\Sigma)\,dr\\
&
=\frac{\Ha^{n-1}(\Sigma)}{n}(T^n-T_0^n).
\end{split}
\end{equation*}
Suppose that $u(r\omega)\leq M$ for every $r\geq r_0$ and $\omega\in\Sigma$. Then, arguing as in the second part of the proof
of Theorem \ref{bound_unbound_per_cyl_prop}, we obtain
\begin{equation*}\begin{split}
P^{NL}_s(\Sg(u),\Omega^\infty)&\geq\Ll_s(\Sg(u)\cap\Omega^\infty,\Co \Sg(u)\setminus\Omega^\infty)\\
&
\geq\Ll_s(\Omega\times(-\infty,-T),\mathcal S(T)\times(T,\infty))\\
&
\geq\frac{|\Omega|}{2^\frac{n+1+s}{2}(n+s)(n-1+s)}\frac{|\mathcal S(T)|}{(2T)^{n-1+s}},
\end{split}
\end{equation*}
for every $T>T_0$.
Since
\begin{equation*}
\frac{|\mathcal S(T)|}{(2T)^{n-1+s}}\sim T^{1-s},
\end{equation*}
which tends to $\infty$ as $T\to\infty$, we get our claim.
\end{proof}
\end{coroll}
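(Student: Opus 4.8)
The plan is to obtain the first two implications as direct specializations of Theorem~\ref{bound_unbound_per_cyl_prop}, and to prove the last assertion by repeating the lower bound from the second half of that theorem's proof, with the annulus $B_T\setminus B_R$ there replaced by a truncated cone over $\Sigma$.

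For \eqref{forml_eq1} I would put $E:=\Sg(u)$, fix $R$ with $\Omega\subset\subset B_R$, and use $u\in L^\infty_{loc}(\R^n)$ to get $|u|\le M$ on $\overline\Omega$; then any integer $k\ge M$ makes \eqref{bound_hp_forml_subgraph} hold for $E$ (in the measure sense). It remains to check $P_s(E,\Omega^{k+1})<\infty$, and here $u\in BV_{loc}(\R^n)$ enters: the subgraph of a $BV_{loc}\cap L^\infty_{loc}$ function is a set of locally finite perimeter in $\R^{n+1}$, hence has finite perimeter in a bounded open neighbourhood of $\overline{\Omega^{k+1}}$, and the embedding $BV\hookrightarrow W^{s,1}$ on bounded extension domains upgrades this to finite $s$-perimeter in $\Omega^{k+1}$; Theorem~\ref{bound_unbound_per_cyl_prop} then gives $P_s^L(\Sg(u),\Omega^\infty)<\infty$. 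For \eqref{forml_eq2}, set $M:=\|u\|_{L^\infty(\R^n)}$: then $-M\le u\le M$ yields, up to null sets, $\{x_{n+1}\le -k\}\subset\Sg(u)\subset\{x_{n+1}\le k\}$ for any integer $k\ge M$, i.e.\ \eqref{bound_hp_forml_subgraph2}, and the second part of Theorem~\ref{bound_unbound_per_cyl_prop} gives $P^{NL}_s(\Sg(u),\Omega^\infty)=\infty$.

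For the final claim I would argue by symmetry, treating the case $u(r\omega)\le M$ for all $\omega\in\Sigma$ and $r\ge r_0$ (the case $u(r\omega)\ge -M$ follows by interchanging $\Sg(u)$ with its complement). Fix $R$ with $\Omega\subset\subset B_R$, set $T_0:=\max\{M,R,r_0\}$, and for $T>T_0$ put $\mathcal S(T):=\{r\omega\,|\,r\in(T_0,T),\ \omega\in\Sigma\}$, so that $\mathcal S(T)\subset B_T\setminus B_{T_0}$ and $|\mathcal S(T)|=\tfrac{\Ha^{n-1}(\Sigma)}{n}(T^n-T_0^n)$. Since $u\ge -M\ge -T$ on $\Omega$ and $u\le M\le T$ on $\mathcal S(T)\subset\Co\Omega$, we have $\Omega\times(-\infty,-T)\subset\Sg(u)\cap\Omega^\infty$ and $\mathcal S(T)\times(T,\infty)\subset\Co\Sg(u)\setminus\Omega^\infty$, hence
\[
P^{NL}_s(\Sg(u),\Omega^\infty)\ \ge\ \Ll_s\big(\Sg(u)\cap\Omega^\infty,\Co\Sg(u)\setminus\Omega^\infty\big)\ \ge\ \Ll_s\big(\Omega\times(-\infty,-T),\,\mathcal S(T)\times(T,\infty)\big).
\]
For $x\in\Omega$, $y\in\mathcal S(T)$, $t<-T$ and $\tau>T$ one has $|x-y|\le 2T\le\tau-t$, so the kernel $(|x-y|^2+(\tau-t)^2)^{-(n+1+s)/2}$ is bounded below by $2^{-(n+1+s)/2}(\tau-t)^{-(n+1+s)}$, and $\int_{-\infty}^{-T}\!\int_T^\infty(\tau-t)^{-(n+1+s)}\,d\tau\,dt=\big((n+s)(n-1+s)\big)^{-1}(2T)^{-(n-1+s)}$. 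Multiplying by $|\Omega|$ and $|\mathcal S(T)|$ gives a lower bound of order $|\mathcal S(T)|/(2T)^{n-1+s}\sim T^{1-s}$, which tends to $\infty$ as $T\to\infty$.

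The routine parts are the two integral estimates, which already appear in the proof of Theorem~\ref{bound_unbound_per_cyl_prop}; the only genuinely new point — and the place to be a bit careful — is the elementary geometric fact that a direction set of positive $\Ha^{n-1}$-measure sweeps out, for large $T$, a solid region $\mathcal S(T)$ of volume comparable to $T^n$, so that the divergence goes through exactly as in the full-annulus case. I do not anticipate any serious obstacle beyond tracking the various ``up to a null set'' inclusions.
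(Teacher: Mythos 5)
Your proposal is correct and follows essentially the same route as the paper: the lower bound for the last claim (truncated cone $\mathcal S(T)$ over $\Sigma$, volume $\sim T^n$, kernel bound $|x-y|\le 2T\le\tau-t$, divergence of order $T^{1-s}$) is exactly the paper's argument, and the symmetry reduction to the case $u(r\omega)\le M$ is the intended one. The paper dismisses \eqref{forml_eq1} and \eqref{forml_eq2} as immediate from Theorem \ref{bound_unbound_per_cyl_prop}; your verification of the hypothesis $P_s(\Sg(u),\Omega^{k+1})<\infty$ via locally finite perimeter of the subgraph and the $BV\hookrightarrow W^{s,1}$ embedding on a bounded neighbourhood of $\overline{\Omega^{k+1}}$ is a correct filling-in of that step.
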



In the classical framework, the area functional of a function $u\in C^{0,1}(\R^n)$ is defined as
\begin{equation*}
\mathcal A(u,\Omega):=\int_\Omega\sqrt{1+|\nabla u|^2}\,dx=\Ha^n\big(\{(x,u(x))\in\R^{n+1}\,|\,x\in\Omega\}\big),
\end{equation*}
for any bounded open set $\Omega\subset\R^n$. Exploiting the subgraph of $u$ one then defines the relaxed area
functional of a function $u\in BV_{loc}(\R^n)$ as
\begin{equation}\label{relaxed_area}
\mathcal A(u,\Omega):=P(\Sg(u),\Omega^\infty).
\end{equation}
Notice that when $u$ is Lipschitz the two definitions coincide.

One might then be tempted to define a nonlocal fractional version of the area functional by replacing the 
classical perimeter in $(\ref{relaxed_area})$ with the $s$-perimeter, that is
\begin{equation*}
\mathcal A_s(u,\Omega):=P_s(\Sg(u),\Omega^\infty).
\end{equation*}
However Corollary $\ref{non_well_def_frac_area}$
shows that this definition is ill-posed even for regular functions $u$.\\
On the other hand, it is worth remarking that one could use just the local part of the $s$-perimeter,
but then the resulting functional
\begin{equation*}
\mathcal A_s^L(u,\Omega):=P_s^L(\Sg(u),\Omega^\infty)=\frac{1}{2}[\chi_{\Sg(u)}]_{W^{s,1}(\Omega^\infty)}
\end{equation*}
has a local nature.

Exploiting Theorem 1 of \cite{Davila}, we obtain the following
\begin{lem}
Let $\Omega\subset\R^n$ be a bounded open set with Lipschitz boundary and let $u\in BV(\Omega)\cap L^\infty(\Omega)$.
Then
\begin{equation}
\lim_{s\to1^-}(1-s)\mathcal A^L_s(u,\Omega)=\omega_n\mathcal A(u,\Omega).
\end{equation}

\begin{proof}
Let $k$ be such that $|u|\leq k$. Then $E=\Sg(u)$ satisfies $(\ref{bound_hp_forml_subgraph})$ and hence, arguing as in the beginning of the proof of Theorem $\ref{bound_unbound_per_cyl_prop}$, we
get
\begin{equation*}
\mathcal A_s^L(u,\Omega)=P_s^L(\Sg(u),\Omega^{k+1})+O(1),
\end{equation*}
as $s\to1$.
Since $\Sg(u)$ has finite perimeter in $\Omega^{k+1}$, which is a bounded open set with Lipschitz boundary,
we conclude using Theorem 1 of \cite{Davila} (see also e.g. \cite{mine_fractal} for the asymptotics as $s\to1$ of the $s$-perimeter).\\
Indeed, notice that since $|u|\leq k$, we have
\begin{equation*}
P(\Sg(u),\Omega^{k+1})=P(\Sg(u),\Omega^\infty)=\mathcal A(u,\Omega).
\end{equation*}
\end{proof}
\end{lem}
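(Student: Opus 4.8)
The plan is to reduce the statement to the $BV$-asymptotics of the Gagliardo $W^{s,1}$-seminorm (Dávila's theorem), by means of the vertical truncation already carried out in the proof of Theorem \ref{bound_unbound_per_cyl_prop}.

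First I would fix $k\in\mathbb N$ with $|u|\le k$ in $\Omega$, so that $E:=\Sg(u)$ satisfies \eqref{bound_hp_forml_subgraph}. Arguing as in the beginning of the proof of Theorem \ref{bound_unbound_per_cyl_prop} --- i.e.\ decomposing $P_s^L$ into $\Ll_s$-contributions as in Proposition \ref{subopensets} (but for the local part of the perimeter) and using that outside $\Omega^{k+1}$ the slice of $E$ is full below level $-k-1$ and empty above level $k+1$ --- one obtains
\begin{equation*}
\mathcal A_s^L(u,\Omega)=P_s^L(\Sg(u),\Omega^\infty)=P_s^L(\Sg(u),\Omega^{k+1})+R_s,
\end{equation*}
where $R_s$ is a sum of three $\Ll_s$-integrals over pairs of sets lying at mutual distance $\ge1$. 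The only thing to check is that $\sup_{s\in[1/2,1)}R_s<\infty$; but the explicit bounds obtained in that same proof (of the form $\tfrac{(n+1)\omega_{n+1}}{s}(2k+1)|\Omega|$ and $\tfrac{|\Omega|^2}{(n+s)(n-1+s)}(2k+2)^{-(n-1+s)}$) are manifestly uniformly bounded for $s$ away from $0$. Hence $(1-s)R_s\to0$ as $s\to1^-$, and it remains to compute $\lim_{s\to1^-}(1-s)P_s^L(\Sg(u),\Omega^{k+1})$.

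Now $\Omega^{k+1}=\Omega\times(-k-1,k+1)\subset\R^{n+1}$ is a bounded open set with Lipschitz boundary, and since $|u|\le k$ the set $\Sg(u)$ is locally full below level $-k-1$ and locally empty above level $k+1$ inside $\Omega^\infty$, so $|D\chi_{\Sg(u)}|$ carries no mass on $\Omega^\infty\setminus\Omega^{k+1}$; thus
\begin{equation*}
P(\Sg(u),\Omega^{k+1})=P(\Sg(u),\Omega^\infty)=\mathcal A(u,\Omega)<\infty,
\end{equation*}
the finiteness coming from $u\in BV(\Omega)$. In particular $\chi_{\Sg(u)}\in BV(\Omega^{k+1})$, and I would then invoke Dávila's theorem (Theorem 1 of \cite{Davila}; see also the asymptotics recalled in \cite{mine_fractal}): for a set $F$ of finite perimeter in a bounded open set $U\subset\R^N$ with Lipschitz boundary one has $\lim_{s\to1^-}(1-s)P_s^L(F,U)=\omega_{N-1}\,P(F,U)$. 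Applying this with $N=n+1$, $U=\Omega^{k+1}$, $F=\Sg(u)$ gives $\lim_{s\to1^-}(1-s)P_s^L(\Sg(u),\Omega^{k+1})=\omega_n\,\mathcal A(u,\Omega)$, and combining with the previous step concludes the proof. The argument presents no real difficulty: the only mildly delicate point is keeping the truncation error $R_s$ bounded near $s=1$, which is already contained in the estimates proved for Theorem \ref{bound_unbound_per_cyl_prop}; the rest is the elementary remark that truncating the cylinder above height $k$ leaves the perimeter of $\Sg(u)$ unchanged when $|u|\le k$, together with a direct appeal to Dávila's $BV$-limit.
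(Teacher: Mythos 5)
Your proposal is correct and follows essentially the same route as the paper: truncate the cylinder at height $k+1$, observe that the error terms (the cross $\Ll_s$-contributions and the part of $P_s^L$ outside $\Omega^{k+1}$) are uniformly bounded as $s\to1^-$ by the estimates from the proof of Theorem \ref{bound_unbound_per_cyl_prop}, note that $P(\Sg(u),\Omega^{k+1})=\mathcal A(u,\Omega)$, and apply D\'avila's theorem in $\Omega^{k+1}$. The only difference is that you spell out the uniform bound on the remainder $R_s$ explicitly, which the paper leaves implicit in the ``$O(1)$''.
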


\end{subsection}

\end{section}

\end{document}